\newcommand{\Pb}{\mathbb{P}}
\newcommand{\dx}{\mathrm{d}}
\newcommand{\R}{\mathbb{R}}
\newcommand{\Z}{\mathbb{Z}}
\newcommand{\C}{\mathbb{C}}
\newcommand{\GUE}{\mathrm{GUE}}
\newtheorem{tthm}{Theorem}
\newtheorem{prop}{Proposition}[section]
\newtheorem{lem}[prop]{Lemma}
\newtheorem{defin}[prop]{Definition}
\newtheorem{cor}{Corollary}
\newtheorem{rem}[prop]{Remark}
\begin{document}

\begin{frontmatter}
\title{$\mathbf{\GUE\times\GUE}$ limit law  at hard shocks in ASEP}
\runtitle{ $\mathbf{\GUE\times\GUE}$ in ASEP}

\begin{aug}
  \author{\fnms{Peter}  \snm{Nejjar}\corref{}\thanksref{t2}\ead[label=e1]{nejjar@iam.uni-bonn.de}},

  \thankstext{t2}{This article was written and submitted while the author was affiliated with IST Austria and his research was  supported by ERC Advanced Grant No. 338804 and ERC Starting Grant No. 716117. 
  Revision and acceptance of the article took place while the author was affiliated with Bonn University, where his  research was supported by  the Deutsche Forschungsgemeinschaft (German Research
Foundation) by the CRC 1060 (Projektnummer 211504053) and Germany’s Excellence Strategy - GZ 2047/1,
Projekt ID 390685813. }

  \runauthor{P. Nejjar}

  \affiliation{Institute of applied mathematics, Bonn University }

  \address{Bonn University, Endenicher Allee 60, 53115 Germany\\ 
          \printead{e1}}

\end{aug}

\begin{abstract}
We consider the asymmetric simple exclusion process (ASEP) on $\Z$ with  initial data such that in the large time particle density $\rho(\cdot)$ a discontinuity (shock)  at the origin is created. At the shock,  the value of $\rho$ jumps from zero to one, but 
$\rho(-\varepsilon),1-\rho(\varepsilon) >0 $ for any $\varepsilon>0$. We are interested in the rescaled position of a tagged particle which enters  the shock with positive probability. We show that, inside the shock region, the particle position  has the KPZ-typical $1/3$ fluctuations, 
a $F_{\GUE}\times F_{\GUE}$ limit law and a degenerated correlation length.   Outside the shock region, the particle fluctuates as if there was no shock. Our arguments are  mostly probabilistic,  in particular,   the mixing times of countable state space ASEPs  are instrumental to study  the fluctuations at shocks. 
\end{abstract}
\end{frontmatter}



\section{Introduction}
We consider the asymmetric simple exclusion process (ASEP) on $\Z$. In this model, particles move in $\Z$  and there is at most one particle per site. Each particle waits independently of all other particles  an exponential time (with parameter $1$) to attempt to move one unit step, which is a step   to the right with probability $p>1/2$, and a step 
to the left with probability $q=1-p$. The attempted jump is successful iff the target site is empty (exclusion constraint). 
ASEP is a continuous-time Markov process  with state space $X=\{0,1\}^{\Z}$ and we denote by $\eta_{\ell}\in X$ the particle configuration at time $\ell$; see \cite{Li85b} for the rigorous construction of ASEP.
If $p=1$ we speak of the totally ASEP (TASEP). 

Given an  initial data $\eta_{0}$ we can assign a label (an integer) to each particle, and we denote by $x_{M}(t)$ the position at time $t$ of the particle with label $M$.
The particle position $x_{M}(t)$ is  directly related to the  height function associated to the ASEP dynamics.  
As  growth model, ASEP belong to the Kardar-Parisi-Zhang (KPZ) universality class, see \cite{Cor11} for a review. 
The members of this class are believed to share, within a few subclasses,  a common large time fluctuation behavior. In particular, ASEP is expected to have, modulo some special situations, the same large time  fluctuation behavior for all $p\in (1/2,1].$ Since TASEP is more tractable than ASEP (due to TASEP's  imminent determinantal structure, but also because certain  probabilistic techniques such as couplings with last passage percolation can be used), many asymptotic results were first obtained for TASEP. 
 In light of the idea of universality, it is of great interest to generalize results from TASEP to the general ASEP.  Another key motivation to study ASEP is that, by considering a weakly asymmetric scaling, ASEP provides a bridge to the famous \textit{KPZ equation}.
The (Cole-Hopf) solution of the KPZ equation is the logarithm of  the solution of the stochastic heat equation with multiplicative noise, and the latter can, for certain initial data, be obtained from ASEP under weak asymmetry \cite{ACQ11}, \cite{BG97}.

The hydrodynamical behavior of ASEP is well established: For ASEP with a sequence of initial configurations  $\eta_{0}^{N}\in X,N\geq 1$, assume that
 \begin{equation}  \label{partdens}
\lim_{N \to \infty}\frac{1}{N}\sum_{i \in \Z}\delta_{\frac{i}{N}}\eta_{0}^{N}(i)=\rho_{0}(\xi)\dx \xi,
\end{equation}
where $\delta_{i/N}$ is the dirac measure at $i/N$ and the convergence is in the sense of vague convergence of measures.
Then the large time density of the ASEP is given by 
 \begin{equation}  \label{partdens2}
\lim_{N \to \infty}\frac{1}{N}\sum_{i \in \Z}\delta_{\frac{i}{N}}\eta_{\tau N}^{N}(i)=\rho(\xi,\tau)\dx \xi,
\end{equation}
where $\rho(\xi,\tau)$ is the unique entropy solution of the Burgers equation with initial data $\rho_{0}$.

A very important   result on KPZ fluctuations  for the general ASEP was obtained in \cite{TW08b}, where the authors consider ASEP with step initial data $\eta^{\mathrm{step}}=\mathbf{1}_{\Z_{\leq 0}}$ (in fact, they consider the initial data $\eta=\mathbf{1}_{\Z_{\geq 0}}$ with particles having a drift to the left, which is equivalent). The limiting particle density \eqref{partdens2} then has a region of decreasing density (rarefaction fan), and  for a particle located in this region, the fluctuations around its macroscopic position are of order $t^{1/3}$ and given by the Tracy-Widom $F_{\GUE}$ distribution  \cite[Theorem 3]{TW08b}.  For TASEP, this result had been shown earlier in \cite[Theorem 1.6]{Jo00b}. The authors of  \cite{TW08b} also obtained the limit law of the rescaled position of the particle initially at position $-M$ ($M$ fixed), see Theorem \ref{convthm} below.
The results of  \cite{TW08b} were later extended to so-called (generalized) step Bernoulli initial data \cite{TW09b}, \cite{AB16b}. For stationary ASEP (where $\eta_{0}(i),i\in \Z$ are i.i.d. Bernoulli), \cite{A18} showed that the current fluctuations along the characteristics converge to the Baik-Rains distribution, again generalizing a result known for TASEP \cite{FS06v} to the general  ASEP. Considerable effort has also been devoted to (half-) flat initial data \cite{OQR16}, \cite{OQR18}, 
which  again are already understood for TASEP \cite{BFS07}, \cite{BFPS06}.

In this paper, we consider the general  ASEP with  a shock (discontinuity) in the particle density, and our main contribution is to show that KPZ fluctuations arise at this shock. In the case of \textit{random} (independent Bernoulli) initial data, 
 shocks in the general ASEP   have been extensively studied, see \cite[Chapter 3]{Li99}  for a review. However, for such random initial data,
  the initial randomness supersedes the fluctuations of ASEP, leading to a gaussian limit law under
 $t^{1/2}$ scaling, i.e. one does not obtain KPZ fluctuations.
  We thus  consider deterministic initial data  defined in  \eqref{IC},\eqref{IC2} below. Their   macroscopic particle density is depicted in Figure \ref{curvdens}: At the origin, two rarefaction fans come together,  and $\rho(\xi,1)$ makes a jump from $0$ to $1$.
 We call this discontinuity in $\rho$ a hard shock and are  interested in the fluctuations of particles around the macroscopic shock position. We study this question for two different initial data: For \eqref{IC}, the shock region remains discrete, 
 whereas for \eqref{IC2} it grows as a power of  $t$. Our main results  - Theorem \ref{GUEGUE} and \ref{GUEGUE2} - show that inside the shock, we have the KPZ-typical $1/3$ fluctuation exponent, a degenerated $1/3$ correlation exponent, as well as a limit law given by a product of two Tracy-Widom $\GUE$ distributions. Such KPZ fluctuation behavior has previously  been only observed at "non-hard" shocks   in the totally asymmetric case (see \cite{FN14} and  \eqref{tildex} below).  
 Theorem \ref{GUEGUE} and \ref{GUEGUE2} thus give the first example of   KPZ fluctuations at shocks in the general asymmetric case.   Specializing our results to TASEP, we  show in  Corollary \ref{TASEPcor} that we can smoothly transit 
 between the hard and non-hard shock fluctuations in TASEP.  We obtained the fluctuations of the second class particle at the shock  in \cite{N20} after the present work was finished.
 
The results of \cite{FN14} (and of the subsequent works \cite{FN15},\cite{Nej18},\cite{FN17},\cite{FGN18})  were all obtained by working in a last passage percolation model, which then is coupled to TASEP. Such a coupling  does not exist for ASEP and in this paper, 
we work directly in the exclusion process.  We  thus  give  a direct understanding of the shock fluctuations without passing through an auxiliary model.  
A key difficulty in the general asymmetric case is then to provide lower bounds for particle positions  (see Proposition \ref{sepprop}). 
In this paper, we  control particle positions by comparison with countable state space ASEPs, and we control the latter by making use of their well-studied (see  \cite{BBHM},\cite{LL19})  mixing behavior.
Despite their importance and popularity, mixing times of finite/countable state space ASEPs do not seem to be a commonly used tool to show KPZ fluctuations. We can formulate the use of mixing times in terms of a general strategy;
we explain this strategy and our methodology in
 Section \ref{method}.  Let us state our  main results now. 

We first consider the case where the shock region remains discrete  and the KPZ fluctuations only appear in a double limit.  We will consider  for  $C\in\R$ the initial data 
\begin{equation}\label{IC}
x_{n}(0)=
\begin{cases}
-n-\lfloor (p-q)(t- C t^{1/2})\rfloor   \quad &\mathrm{for} \,  n \geq 1 \\
-n\quad &\mathrm{for}\, -\lfloor (p-q)(t  - C t^{1/2})\rfloor  \leq n \leq 0,
\end{cases}
\end{equation}
 and we denote by $(\eta_{\ell})_{\ell\geq 0}$ the ASEP started from this initial data.  
In order to emphasize the dependence on $t,C$ we may write $(\eta_{\ell})_{\ell\geq 0}= (\eta_{\ell}^{C,t})_{\ell\geq 0}$. We will consider $ (\eta_{\ell}^{C,t},0\leq \ell\leq t)$ and let $t $ go to infinity. Then, with the notation of \eqref{partdens},
we have $N=t$ so that $\eta^{C,t}_{0}=\eta_{0}^{N}.$  In the following, we will however suppress the $t,C$ from our notation and simply write  $(\eta_{\ell})_{\ell\geq 0}$.
The initial particle density $\rho_{0}$ from  \eqref{partdens} is depicted in Figure \ref{curvdens} on the left: The initial data \eqref{IC} has two regions where the density is one, namely an  infinite region to the left of $q-p,$ and a finite region in between $0$ and $p-q$. At time $t$, the particles from the infinite region have formed a rarefaction fan
 that \textit{ends} at the origin. Equally at time $t$, the particles from the finite region 
 have formed a rarefaction fan which \textit{begins} at the origin. 
 So at time $t,$ the two fans come together and create a shock at the origin, see
 Figure \ref{curvdens} on the right for an illustration of the large time density $\rho(\xi,1).$

We will study the position of particle $x_{M}(t), M\geq 1,$ in the double limit $M\to\infty \, t\to \infty.$
 A particular choice of the value $C$ is then 
\begin{equation}\label{C}
C=C(M)=2\sqrt{\frac{M}{p-q}}, \quad M\in \Z_{\geq1}.
\end{equation}
  The scaling \eqref{C} is precisely so that it is particle $x_{M}$ which at time $t$ is located around the origin: If all the particles $x_{n},n\leq 0,$ 
 were absent from the system, then $x_{M}(t)$ would have $M^{-1/6}t^{1/2}$ fluctuations  around the origin and converge to a single $F_{\GUE}$ (see below)  distribution as $M\to\infty \, t\to \infty$. Because of the shock though,   $x_{M}(t)$ has very different fluctuation behavior at the origin, see Theorem \ref{GUEGUE}.


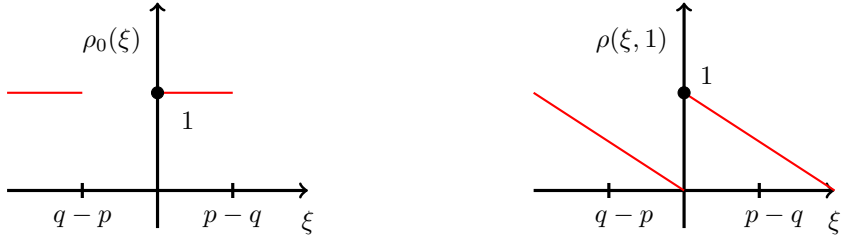
\begin{figure}
 \begin{center}
   \begin{tikzpicture}
       \draw (0.4,0.7) node[anchor=south]{\small{$1$}};
  \draw [very thick, ->] (0,-0.5) -- (0,2.5);
    \draw (-0.1,2) node[anchor=east] {\small{$\rho_{0}(\xi)$}};
    \draw[red,thick ] (-2,1.3) -- (-1,1.3);
      \draw[thick,red] (0,1.3) -- (1,1.3);
\draw[very thick] (-1,-0.1)--(-1,0.1)  ;
  \draw (-1,-0.6) node[anchor=south]{\small{$q-p$}};
  \draw[very thick] (1,-0.1)--(1,0.1)  ;
  \draw (1,-0.6) node[anchor=south]{\small{$p-q$}};
\filldraw(0,1.3) circle(0.08cm);
   \draw [very thick, ->] (-2,0) -- (2,0) node[below=4pt] {\small{$\xi$}};

\begin{scope}[xshift=7cm]
 \draw [very thick,->] (-2,0) -- (2,0) node[below=4pt] {\small{$\xi$}};
  \draw [very thick,->] (0,-0.5) -- (0,2.5);
    \draw (0.3,1.3) node[anchor=south]{\small{$1$}};
    \draw[red,thick] (-2,1.3) -- (0,0);
    \draw[red,thick] (0,1.3)--(2,0);
    \draw[very thick] (-1,-0.1) -- (-1,0.1);
       \draw (-0.8,-0.6) node[anchor=south]{\small{$q-p$}};
    \draw[very thick] (1,-0.1) -- (1,0.1);
       \draw (1.2,-0.6) node[anchor=south]{\small{$p-q$}};
\filldraw(0,1.3) circle(0.08cm); 
    \draw (-0.1,2) node[anchor=east] {\small{$\rho(\xi,1)$}};
   \end{scope}
   \end{tikzpicture}    \end{center} \caption{ Left: The macroscopic   initial particle density $\rho_{0}$ of    the initial configurations \eqref{IC} and, after rescaling $\xi\to \xi/(p-q),$ \eqref{IC2}.
     Right: The large time particle density $\rho$  for $\rho_{0}$. At the origin, $\rho$ jumps from $0$ to $1$, and $\rho(-\varepsilon,1),1-\rho(\varepsilon,1)>0$ for any $\varepsilon>0$. }\label{curvdens}
\end{figure}

Let us  define the Tracy-Widom $F_{\GUE}$ distribution function  which appears in our main results: It originates in random matrix theory \cite{TW94} and is given by 
\begin{equation*}\label{FGUE2}
F_{\GUE}(s)=\sum_{n=0}^{\infty} \frac{(-1)^{n}}{n!}  \int_{s}^{\infty}\dx x_{1}\ldots  \int_{s}^{\infty}\dx x_{n}\det(K_{2}(x_{i},x_{j})_{1\leq i,j\leq n}),\end{equation*}
where $K_{2}(x,y)$ is the Airy kernel  $K_{2}(x,y)=\frac{Ai(x)Ai^{\prime}(y)-Ai(y)Ai^{\prime}(x)}{x-y},x\neq y, $ defined for $x=y$ by continuity and $Ai$ is the Airy function.  

 The following Theorem, proven in Section \ref{adapt}, is our first main result.  We  get the same fluctuations also in a single limit, see Theorem \ref{GUEGUE2}. 
Let us emphasize that $M\in \Z_{\geq 1}$ denotes the label of a particle and that $M$ remains fixed as we send $t\to \infty.$ 

\begin{tthm}\label{GUEGUE}
Consider ASEP with the initial data \eqref{IC} and $C=C(M)$ as in \eqref{C}.
Then 
\begin{equation}\label{Jamie}
\lim_{M \to \infty} \lim_{t \to \infty}\Pb\left( x_{M+\lambda M^{1/3}}(t)\geq -\xi M^{1/3}\right)=F_{\GUE}(-\lambda)F_{\GUE}(\xi-\lambda)
\end{equation}
for $\lambda,\xi \in \R$. Furthermore,  we have for $s \in \R \setminus\{0\}$
\begin{equation}\label{Thehound}
\lim_{M \to \infty} \lim_{t \to \infty}\Pb\left( x_{M+\lambda M^{1/3}}(t)\geq -s\sqrt{p-q}t^{1/2}M^{-1/6}\right)=F_{\GUE}(s-\lambda)\mathbf{1}_{\{s>0\}}.
\end{equation}
\end{tthm}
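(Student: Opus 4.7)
The plan is to establish \eqref{Jamie} and \eqref{Thehound} by sandwiching $\Pb(x_N(t)\ge -\xi M^{1/3})$, with $N:=M+\lambda M^{1/3}$, between matching upper and lower bounds obtained by comparing the combined ASEP $\eta$ with two single-block auxiliary ASEPs: $\eta^L$ started from only the semi-infinite left block of \eqref{IC}, and $\eta^R$ started from only the finite right block. Both satisfy $\eta^L\le\eta$ and $\eta^R\le\eta$ componentwise, so the attractive basic coupling of ASEP yields height-function monotonicity and, in particular, the immediate inequality $x_N(t)\le x_N^L(t)$.

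\textbf{Upper bound.} I would use the height-function identity $\{x_N(t)\ge x\}=\{N_L(x,t)-N_R^c(x,t)\ge N\}$, where $N_L(x,t)$ counts the left-block particles of $\eta_t$ at positions $\ge x$ and $N_R^c(x,t)$ counts the right-block particles at positions $<x$. Combined with the basic-coupling inequalities for the analogous counts in $\eta^L$ and $\eta^R$, this contains $\{x_N(t)\ge -\xi M^{1/3}\}$, up to an $o(1)$ error after $t\to\infty$, in the intersection of a left-fan event and a right-fan event. The left-fan event is $\{x^L_N(t)\ge 0\}$: by Theorem \ref{convthm} applied to the shifted step initial data of $\eta^L$, $x_N^L(t)$ has outer-scale fluctuations $(p-q)^{1/2}t^{1/2}M^{-1/6}$ of $F_{\GUE}$ type, and since the shock scale $M^{1/3}$ is negligible on this outer scale after $t\to\infty$, the event collapses to $\{\chi\le-\lambda\}$ with $\chi\sim F_{\GUE}$, giving $F_{\GUE}(-\lambda)$. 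The second factor is obtained by an analogous edge Tracy-Widom statement applied to the leftmost particles of the finite block $\eta^R$ via particle--hole duality, contributing $F_{\GUE}(\xi-\lambda)$; the key extra step is the asymptotic factorization of the intersection into the product of the two marginals.

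\textbf{Lower bound.} The matching lower bound is the principal obstacle and is where the paper's mixing-time strategy is essential (see Proposition \ref{sepprop}). I would restrict the combined ASEP to a window around the origin containing the shock, obtaining a countable-state-space ASEP whose mixing time, controlled by the estimates of \cite{BBHM,LL19}, is $o(t)$ for a suitable choice of window. By time $t$ this localized process is close to its stationary blocking measure, and under stationarity the positions of the top particles on either side of the shock decouple into an independent pair of edge-type $F_{\GUE}$ variables. This delivers the matching product lower bound $F_{\GUE}(-\lambda)F_{\GUE}(\xi-\lambda)$ and hence \eqref{Jamie}.

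\textbf{Assertion \eqref{Thehound}.} For $s>0$, the cutoff $-s\sqrt{p-q}\,t^{1/2}M^{-1/6}$ lies on the outer scale to the left of the origin, and a refinement of the coupling argument above shows that the right block modifies $x_N(t)$ by only $o(t^{1/2}M^{-1/6})$ at this scale; Theorem \ref{convthm} applied to $\eta^L$ then identifies the limit as $F_{\GUE}(s-\lambda)$. For $s<0$ the cutoff lies on the outer scale to the right of the origin, and basic coupling with the fully occupied initial data $\mathbf{1}_{\{\le a\}}$ (with $a=\lfloor(p-q)(t-Ct^{1/2})\rfloor$) bounds $x_N(t)$ above by a quantity of order at most $t^{1/3}$ around the origin, which is $o(t^{1/2}M^{-1/6})$ and forces the probability to zero. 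The principal difficulty throughout is the lower bound via mixing: calibrating the window size so that the localized ASEP indeed mixes within time $t$ while still capturing the $F_{\GUE}\times F_{\GUE}$ fluctuations.
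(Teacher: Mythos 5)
Your overall architecture is right (upper bound by coupling with auxiliary processes plus asymptotic factorization; lower bound via mixing times of a localized ASEP), but two of your key steps go wrong in ways that matter.

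First, the upper bound. You compare $\eta$ with $\eta^L\le\eta$ and $\eta^R\le\eta$, where $\eta^R$ is the isolated finite right block. The inequality $x_N(t)\le x_N^L(t)$ is correct and yields the factor $F_{\GUE}(-\lambda)$, but the second factor cannot come from $\eta^R$: since $\eta^R\le\eta$, the basic coupling shows that the right-block particles of the full system lie weakly to the \emph{right} of the corresponding particles of $\eta^R$, which is a lower bound on their positions, i.e.\ the wrong direction for upper-bounding $\Pb(x_N(t)\ge -\xi M^{1/3})$. The paper's second comparison process is $\eta^B_0=\mathbf{1}_{\Z_{\le \lfloor (p-q)(t-Ct^{1/2})\rfloor}}$, which \emph{dominates} $\eta_0$ (it fills in all holes to the left of the right block); only then does Proposition \ref{sepprop} give $x_N(t)\le\min\{x_N^A(t),x_N^B(t)\}$, and the decoupling of $x_N^A(t-t^\kappa)$ and $x_N^B(t)$ (via slow decorrelation and disjointness of the relevant space-time regions) produces the product $F_{\GUE}(-\lambda)F_{\GUE}(\xi-\lambda)$. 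Your height-function identity is also off as written ($\{x_N(t)\ge x\}$ is simply the event that at least $N$ left-block particles sit at positions $\ge x$), but the directional error in the choice of $\eta^R$ is the substantive gap.

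Second, the lower bound. You assert that after the localized ASEP mixes, ``under stationarity the positions of the top particles on either side of the shock decouple into an independent pair of edge-type $F_{\GUE}$ variables.'' This is not the mechanism and cannot be: the stationary (blocking) measure of a countable-state-space ASEP is a product measure under which the leftmost particle is exponentially concentrated near the right edge of its block — it carries no Tracy--Widom fluctuations at all. In the paper, the two $F_{\GUE}$ factors are generated \emph{before} mixing, by the event $\mathcal{E}_{t-t^{\chi}}$ of \eqref{passauf} that at time $t-t^{\chi}$ the particle $x_0$ (left edge of the right fan) and the particle $x_{M+\lambda M^{1/3}}$ (left fan) have each reached prescribed positions; the Harris/FKG inequality (Proposition \ref{FKG}) is then essential to lower-bound the probability of the intersection by the product of the marginals, and Proposition \ref{X0} itself requires a separate application of the same mixing strategy. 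The mixing estimate of Proposition \ref{DOIT} only contributes a $1-o(1)$ factor: conditionally on $\mathcal{E}_{t-t^{\chi}}$, the localized ASEP relaxes to its blocking measure during $[t-t^{\chi},t]$ and thereby pushes $x_{M+\lambda M^{1/3}}(t)$ above $-\xi M^{1/3}-M^{1/4}$ with probability tending to one. Without the FKG step and without identifying the pre-mixing fans as the source of the randomness, your lower bound does not produce the product law. (Your treatment of \eqref{Thehound} for $s>0$ also glosses over the genuinely delicate point — the chain-of-collisions argument around \eqref{omit}--\eqref{EM} showing that $x_N(t)\ne x_N^A(t)$ forces $x_N(t)\ge -(M+1)t^{\delta/2}$ — but the two issues above are the ones that break the proof.)
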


A few remarks are in order.  It was essential to scale $C$ as in \eqref{C}, since e.g. for $C$ fixed, the double limit \eqref{Jamie} would be equal to zero. 

Note  that  the limits  \eqref{Jamie} and \eqref{Thehound} are consistent in the sense that there is a continuous  transition
\begin{equation*}
\lim_{s \searrow 0}F_{\GUE}(s-\lambda)\mathbf{1}_{\{s>0\}}=\lim_{\xi\to + \infty}F_{\GUE}(-\lambda)F_{\GUE}(\xi-\lambda).
\end{equation*}
The convergence \eqref{Thehound} means that to the left of the shock, $ x_{M+\lambda M^{1/3}}(t)$ fluctuates like the $(M+\lambda M^{1/3})$-th particle of ASEP with step initial data, i.e.  $ x_{M+\lambda M^{1/3}}(t)$ fluctuates as if there was no shock.

Inside the shock, the fluctuation behavior of   $ x_{M+\lambda M^{1/3}}(t)$ changes : the  $\xi M^{1/3}$ term  in \eqref{Jamie} is  the usual KPZ $1/3$ fluctuation exponent, whereas the particle number $M+\lambda M^{1/3}$ in \eqref{Jamie} represents the degenerated correlation length known from shocks in TASEP:
one takes $M+\lambda M^{1/3}$ rather than $M+\lambda M^{2/3}$ ($2/3$ being the typical KPZ correlation exponent), and $x_{M+\lambda M^{2/3}}$ no longer converges to $F_{\GUE}\times F_{\GUE}$. See also the comparison between hard and non-hard shock fluctuations in Section \ref{compare}.







Next we come to a shock where the convergence to  $F_{\GUE}\times F_{\GUE}$ happens in a single limit: For this, we consider a \textit{different} initial data, and to highlight this difference, we will write  $X_{n}(t)$ for the position of the particle with label $n$ at time $t,$ rather than $x_{n}(t)$ as before. To define this new  initial data, let $\nu\in (0,1)$ and  set  
\begin{equation}\label{IC2}
X_{n}(0)=
\begin{cases}
-n-\lfloor t- 2 t^{\nu/2+1/2}\rfloor   \quad &\mathrm{for} \,  n \geq 1 \\
-n\quad &\mathrm{for}\, -\lfloor t  - 2 t^{\nu/2+1/2}\rfloor  \leq n \leq 0.
\end{cases}
\end{equation}
The initial data \eqref{IC2} creates, upon rescaling time as  $t/(p-q)$ and sending $t\to \infty$,  the same macroscopic density profile as the initial data \eqref{IC} (see Figure \ref{curvdens}). 
However, in the following Theorem \ref{GUEGUE2}, we have a   shock region  of size $\mathcal{O}(t^{\nu/3}),$ whereas in Theorem  \ref{GUEGUE}, the shock region is of size $\mathcal{O}(M^{1/3})$.
The convergence to Tracy-Widom distributions thus happens in a single $t\to \infty$ limit:
\begin{tthm}\label{GUEGUE2}
Consider ASEP with initial data \eqref{IC2}. Let
$\lambda,\xi \in \R,$ and $s\in \R\setminus\{0\}.$ Then, for $\nu\in(0,3/7)$ we have  
\begin{align}\label{jaja}
&\lim_{t \to \infty} \Pb\left(X_{t^{\nu}+\lambda t^{\nu/3}}(t/(p-q))\geq -\xi t^{\nu/3}\right)=F_{\GUE}(-\lambda)F_{\GUE}(\xi-\lambda)
\\&\lim_{t \to \infty} \Pb\left(X_{t^{\nu}+\lambda t^{\nu/3}}(t/(p-q))\geq -s t^{1/2-\nu/6}\right)=F_{\GUE}(s-\lambda)\mathbf{1}_{\{s>0\}}.\label{jaja2}
\end{align}
\end{tthm}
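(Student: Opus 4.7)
The proof follows the strategy used for Theorem \ref{GUEGUE} (developed in Section \ref{adapt}), reorganized so that all asymptotics take place in a single $t\to\infty$ limit, with the particle label $n = \lfloor t^\nu\rfloor + \lambda t^{\nu/3}$ growing polynomially with $t$. The restriction $\nu<3/7$ is dictated by the quantitative error estimates needed to turn the double-limit argument into a single limit: the various Taylor-remainder and finite-block correction terms must remain of strictly smaller order than the shock scale $t^{\nu/3}$.

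I would first construct, on a common probability space driven by the same Poisson clocks, two auxiliary ASEPs: $\eta^{\mathrm{L}}$ started from only the infinite left block $\{x\le -K-1\}$, with $K=\lfloor t-2t^{\nu/2+1/2}\rfloor$, and $\eta^{\mathrm{R}}$ started from only the finite right block $\{0,1,\dots,K\}$. Using the basic coupling together with the lower bound on particle positions supplied by Proposition \ref{sepprop}, I would show that $X_n(t/(p-q))$ in the full system agrees, up to an error of strictly smaller order than $t^{\nu/3}$, with a simple combination (typically a minimum of two appropriately matched positions) computed in the two auxiliary processes. Theorem \ref{convthm}, applied separately to $\eta^{\mathrm L}$ and to the mirror-image step ASEP that encodes $\eta^{\mathrm R}$, then supplies the two marginal $F_{\GUE}$ laws at the shock-scale test level $-\xi t^{\nu/3}$ after the appropriate centering absorbs the macroscopic drift; a Taylor expansion of the leading term $t-2\sqrt{nt}$ around $n=t^\nu$ cancels against the block shift $-K-1$ and produces the $\lambda$-dependent displacement on the KPZ scale.

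The third and technically most delicate ingredient is to establish asymptotic independence of the two marginal $F_{\GUE}$ fluctuations, so that the joint probability factorizes. This is where the mixing-time strategy described in Section \ref{method} becomes essential: the randomness that drives $\eta^{\mathrm L}$ and $\eta^{\mathrm R}$ is supported, up to negligible interactions, in disjoint macroscopic regions, and any residual correlation across the narrow shock stripe is washed out on a timescale much shorter than $t$ by the countable-state ASEP mixing estimates of \cite{BBHM,LL19}. Combining independence with the Tracy--Widom marginals yields \eqref{jaja}; formula \eqref{jaja2} follows from the same decomposition with the test level $-s\,t^{1/2-\nu/6}$ lying outside the shock region, which causes one of the two factors to degenerate to $\mathbf{1}_{\{s>0\}}$ while the other retains its $F_{\GUE}$ form and produces $F_{\GUE}(s-\lambda)$. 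In the genuinely asymmetric regime $p>1/2$ no last-passage-percolation coupling is available, as it is for TASEP in \cite{FN14}, so extracting this independence directly from the mixing-time analysis is the central technical challenge.
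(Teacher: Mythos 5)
There is a genuine gap, and it sits at the very first load-bearing step. You claim that, via Proposition \ref{sepprop}, $X_{n}(t/(p-q))$ ``agrees, up to an error of strictly smaller order than $t^{\nu/3}$,'' with a minimum computed from the two auxiliary processes. Proposition \ref{sepprop} gives no such thing for ASEP: it yields only the one-sided inequality $Y_{n}(t)\geq X_{n}(t)$ (equality holds only in the totally asymmetric case), so the minimum construction produces only the \emph{upper} bound $\lim_t \Pb(X_{M(t)}(t/(p-q))\geq -\xi t^{\nu/3})\leq F_{\GUE}(-\lambda)F_{\GUE}(\xi-\lambda)$. The paper stresses that this one-sidedness is precisely why the ASEP case is hard, and the matching lower bound is obtained by an entirely different mechanism: one constructs a good event $\mathcal{E}^{\nu}_{t-t^{\chi}}$ measurable with respect to the dynamics up to time $t-t^{\chi}$, lower-bounds its probability by the product $F_{\GUE}(-\lambda)F_{\GUE}(\xi-\lambda)$ using the Harris correlation inequality (Proposition \ref{FKG}, which your proposal never invokes) together with the analogue of Proposition \ref{X0}, and then, on that event, dominates $X_{M(t)}$ from below by the leftmost particle of an independent countable-state-space ASEP started at time $t-t^{\chi}$; Proposition \ref{DOIT} (the \cite{BBHM} hitting-time bounds) shows this auxiliary process equilibrates to the blocking measure during $[t-t^{\chi},t]$, which forces its leftmost particle to the right of $-\xi t^{\nu/3}-t^{\nu/4}$ with probability tending to one. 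Without this second argument your proof establishes only half of \eqref{jaja}.

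A secondary but related misplacement: you invoke the mixing-time estimates to prove the asymptotic independence of the two $F_{\GUE}$ marginals. In the paper that factorization (the analogue of Proposition \ref{indepprop}, equation \eqref{XX}) is proved by showing that the relevant particle and hole trajectories stay in disjoint space-time regions, combined with the slow decorrelation step \eqref{prop00}; mixing times play no role there. The mixing input lives exclusively in the lower bound just described, and this is also where the threshold $\nu<3/7$ genuinely originates: one needs the equilibration window $t^{\chi}$ to exceed the size $t^{\nu+\delta}$ of the configuration being mixed while keeping $\chi<1/2-\nu/6$ for the $F_{\GUE}$ input, forcing $\nu+\delta<1/2-\nu/6$; the same inequality $(M(t)+1)t^{\delta/2}<st^{1/2-\nu/6}$ reappears in the proof of \eqref{jaja2} via the iterated stopping-time bound \eqref{EM2}. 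Your attribution of the $3/7$ restriction to Taylor remainders does not capture either source.
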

The proof of Theorem \ref{GUEGUE2} is given in Section \ref{adapt} and has the same structure as the proof of Theorem \ref{GUEGUE}, but some extra care has to be taken because of the parameter $\nu$. The technical  restriction $\nu<3/7$ comes  into play to show that certain ASEPs have enough time to mix to equilibrium as well as for the convergence \eqref{jaja2}, see  the explanations around \eqref{ineqq2},  \eqref{aligned} (in Section \ref{lowsec}) and \eqref{EM2} (in Section \ref{adapt})  for details. 
 It is however unlikely that the value $3/7$ represents a real threshold. In \eqref{jajanee2} of Section \ref{adapt},    we prove \eqref{jaja}   for all $\nu\in (0,1)$ for TASEP, and show that $F_{\GUE}(-\lambda)F_{\GUE}(\xi-\lambda)$ is an upper bound in  \eqref{jaja} for the general ASEP.

\subsection{Comparison with non-hard shocks in TASEP}\label{compare}
In the special case of totally asymmetric exclusion ($p=1$), we can transit between the fluctuations at a non-hard shock, and the hard shock fluctuations of Theorems \ref{GUEGUE} and \ref{GUEGUE2}.

Specifically, consider for TASEP  the initial data
\begin{equation}\label{tildex}
\tilde{x}_{n}(0)=
\begin{cases}
-n-\lfloor \beta t\rfloor   \quad &\mathrm{for} \,  n \geq 1 \\
-n\quad &\mathrm{for}\, -\lfloor \beta t\rfloor  \leq n \leq 0,\\
\end{cases}
\end{equation}
where $\beta \in (0,1)$. This is  the shock which was studied in Corollary 2.7 of \cite{FN14} by coupling TASEP with last passage percolation.
For the $\tilde{x}_{n}$, there is  a shock at the origin where the density jumps from $(1-\beta)/2$ to $(1+\beta)/2$.
We  show that there is a smooth transition 
between the fluctuations at the hard shock in \eqref{Jamie} and the  shock created by the initial data \eqref{tildex}:
\begin{cor}\label{TASEPcor}
Consider TASEP with the initial configurations $\tilde{x}_{n},x_{n}$ with $C$ as in \eqref{C}. Then we have
\begin{align*}
\lim_{\beta \nearrow 1}\lim_{t \to \infty}&\Pb\left(\tilde{x}_{\frac{(1-\beta)^{2}}{4}t+\lambda \frac{(1-\beta)^{2/3}}{2^{2/3}}t^{1/3}}(t)\geq -\xi  \frac{(1-\beta)^{2/3}}{2^{2/3}}t^{1/3}\right)
\\&=\lim_{M \to \infty} \lim_{t \to \infty}\Pb\left( x_{M+\lambda M^{1/3}}(t)\geq -\xi M^{1/3}\right)=F_{\GUE}(-\lambda)F_{\GUE}(\xi-\lambda).
\end{align*}
\end{cor}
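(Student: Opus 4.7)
The corollary should follow by combining two already-known fluctuation results, with the only genuine content being the verification that the scaling windows on both sides of the stated chain match.

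For the rightmost equality, my plan is to simply specialize Theorem \ref{GUEGUE} to the totally asymmetric case $p=1$, $q=0$. With $p-q=1$, the prescription $C(M)=2\sqrt{M}$ from \eqref{C} and the initial data \eqref{IC} reduce to their TASEP counterparts, and Theorem \ref{GUEGUE} delivers
\[
\lim_{M \to \infty} \lim_{t \to \infty}\Pb\left( x_{M+\lambda M^{1/3}}(t)\geq -\xi M^{1/3}\right) = F_{\GUE}(-\lambda) F_{\GUE}(\xi - \lambda)
\]
directly, so only the leftmost equality remains to be established.

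For the leftmost equality, the plan is to show that for every fixed $\beta \in (0,1)$ the inner $t \to \infty$ limit already equals $F_{\GUE}(-\lambda) F_{\GUE}(\xi-\lambda)$; then the outer $\beta \nearrow 1$ limit is trivial. The inner limit is the content of Corollary~2.7 of \cite{FN14}, which treats the non-hard shock created by \eqref{tildex} via a coupling of TASEP with last passage percolation. To apply it I need to identify the macroscopic shock label and the KPZ correlation length from the geometry of the two colliding rarefaction fans. The left rarefaction has asymptotic density $\rho_L(\xi,1) = (1-\beta-\xi)/2$ on $\xi \in [-1-\beta,\,1-\beta]$, so the number of left-block particles lying to the right of the origin at time $t$ is asymptotically
\[
t \int_0^{1-\beta} \frac{1-\beta-\eta}{2}\,\dx\eta \;=\; \frac{(1-\beta)^{2}}{4}\, t,
\]
which identifies $\frac{(1-\beta)^2}{4}t$ as the label of the particle at the macroscopic shock. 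The associated KPZ fluctuation scale at density $\rho_-=(1-\beta)/2$ equals $(1-\beta)^{2/3} t^{1/3}/2^{2/3}$, which is precisely $M^{1/3}$ under the identification $M = (1-\beta)^2 t/4$. Plugging these scalings into Corollary~2.7 of \cite{FN14} yields the desired inner limit for each $\beta \in (0,1)$, and the $\beta \nearrow 1$ limit is then immediate because the answer does not depend on $\beta$.

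I do not anticipate any real obstacle, since both ingredients are already available; the only verification is the algebraic matching of scalings carried out above. The substance of the corollary is therefore the observation that, under the identification $M = (1-\beta)^2 t/4$, the hard-shock scaling window of Theorem \ref{GUEGUE} arises as the $\beta \nearrow 1$ degeneration of the non-hard shock scaling of \cite{FN14}, and the single formula $F_{\GUE}(-\lambda) F_{\GUE}(\xi-\lambda)$ describes both regimes, demonstrating a smooth transition between them.
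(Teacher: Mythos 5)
Your overall route (apply Corollary~2.7 of \cite{FN14} for fixed $\beta$, then let $\beta\nearrow 1$, and get the right-hand equality from Theorem~\ref{GUEGUE} at $p=1$) is the same as the paper's, and the identification of the shock label $\frac{(1-\beta)^2}{4}t$ is correct. However, there is a genuine error in the central claim: for fixed $\beta\in(0,1)$ the inner $t\to\infty$ limit is \emph{not} equal to $F_{\GUE}(-\lambda)F_{\GUE}(\xi-\lambda)$, and the outer limit $\beta\nearrow 1$ is therefore not trivial. Corollary~2.7 of \cite{FN14} gives
\begin{equation*}
\lim_{t\to\infty}\Pb\left(\tilde{x}_{\frac{(1-\beta)^{2}}{4}t+\lambda  t^{1/3}}(t)\geq -\xi t^{1/3}\right) =F_\GUE \left(\frac{\xi-\lambda/\rho_1}{\sigma_1}\right)F_\GUE \left(\frac{\xi-\lambda/\rho_2}{\sigma_2}\right)
\end{equation*}
with $\rho_1=\frac{1-\beta}{2}$, $\rho_2=\frac{1+\beta}{2}$, $\sigma_1=\frac{(1+\beta)^{2/3}}{2^{1/3}(1-\beta)^{1/3}}$, $\sigma_2=\frac{(1-\beta)^{2/3}}{2^{1/3}(1+\beta)^{1/3}}$: the two factors see the two \emph{different} densities and have two \emph{different} fluctuation coefficients, so a single normalization $(1-\beta)^{2/3}t^{1/3}/2^{2/3}$ cannot reduce both factors to their $\beta$-free form at fixed $\beta$. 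Substituting $\lambda\mapsto\lambda(1-\beta)^{2/3}/2^{2/3}$ and $\xi\mapsto\xi(1-\beta)^{2/3}/2^{2/3}$ one finds the arguments
\begin{equation*}
\frac{(1-\beta)\xi-2\lambda}{2^{1/3}(1+\beta)^{2/3}}\quad\text{and}\quad \frac{(1+\beta)^{1/3}\xi}{2^{1/3}}-\frac{2\lambda}{2^{1/3}(1+\beta)^{2/3}},
\end{equation*}
which for $\beta<1$ are not $-\lambda$ and $\xi-\lambda$; they only converge to these values as $\beta\nearrow1$.

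So the actual content of the left-hand equality is precisely the computation you declared unnecessary: the $\beta$-dependent product degenerates, by the explicit limits above together with the continuity of $F_{\GUE}$, to $F_{\GUE}(-\lambda)F_{\GUE}(\xi-\lambda)$ as $\beta\nearrow 1$. Your heuristic that the window is ``the KPZ fluctuation scale at density $\rho_-$'' obscures this, since the relevant scales on the two sides of the shock, $\sigma_1$ and $\sigma_2$, differ and both diverge or degenerate as $\beta\nearrow 1$. Once this computation is inserted, the rest of your argument (including the specialization of Theorem~\ref{GUEGUE} to $p=1$ for the right-hand equality) agrees with the paper's proof.
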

\begin{proof}

By  Corollary  2.7 of  \cite{FN14}, we have 
\begin{equation*}
\lim_{t\to\infty}\Pb\left(\tilde{x}_{\frac{(1-\beta)^{2}}{4}t+\lambda  t^{1/3}}(t)\geq -\xi t^{1/3}\right) =F_\GUE \left(\frac{\xi-\lambda/\rho_1}{\sigma_1}\right)F_\GUE \left(\frac{\xi-\lambda/\rho_2}{\sigma_2}\right)
\end{equation*}
with $\rho_1=\frac{1-\beta}{2}$, $\rho_2=\frac{1+\beta}{2}$, $\sigma_1=\frac{(1+\beta)^{2/3}}{2^{1/3}(1-\beta)^{1/3}}$, and $\sigma_2=\frac{(1-\beta)^{2/3}}{2^{1/3}(1+\beta)^{1/3}}$.
By a simple calculation and the continuity of the $F_\GUE$ distribution function 
this gives 
\begin{align*}
\lim_{\beta \nearrow 1}\lim_{t \to \infty}&\Pb\left(\tilde{x}_{\frac{(1-\beta)^{2}}{4}t+\lambda \frac{(1-\beta)^{2/3}}{2^{2/3}}t^{1/3}}(t)\geq -\xi  \frac{(1-\beta)^{2/3}}{2^{2/3}}t^{1/3}\right)
=F_{\GUE}(-\lambda)F_{\GUE}(\xi-\lambda),
\end{align*}
finishing the proof by using  Theorem \ref{GUEGUE}.
\end{proof}
\subsection{Graphical construction and applications}\label{BC}
In this section, we recall the graphical construction of ASEP, the basic coupling, the approximation of infinite ASEPs by finite ones, and we prove a correlation inequality using a result of  Harris \cite{Har77}, which we formulate as Proposition \ref{FKG}.
While it is  possible that Proposition \ref{FKG} is somewhere in the literature, we do not know where, hence we provide a full proof.

The graphical construction of particle systems goes back to Harris \cite{Har78}. 
We take a collection $(P^{i,i+1},P^{i,i-1},i\in\Z)$ of independent Poisson processes constructed on some probability space $(\hat{\Omega},\mathcal{A}, \mathbb{P} )$ . The processes $(P^{i,i+1},i\in\Z)$ have rate $p\in (1/2,1],$  and the processes $(P^{i,i-1},i\in\Z)$ have rate $q=1-p$. We denote by $P^{i,i\pm 1}_{t}$ the value of the Poisson process at time $t$. 
 Due to the independence of the Poisson processes, for almost every $\omega\in \hat{\Omega}$ there is a doubly infinite sequence $(i_{n},n\in \Z)$ of integers such that 
\begin{equation}\label{in}
\cdots <i_{n-2}< i_{n-1}<i_{n}<i_{n+1}<i_{n+2}<\cdots,
\end{equation}
and $P^{i_{n}+1,i_{n}}_{t}(\omega)=P^{i_{n},i_{n}+ 1}_{t}(\omega)=0,n \in \Z$. 

We construct an ASEP starting from $\zeta_{0}\in \{0,1\}^{\Z}$ by the following two rules: When the Poisson process $P^{i,i+1}$ makes a jump, and  there is a particle  at $i$ and a hole at $i+1,$ then the particle at $i$ and the hole at $i+1$ exchange positions. Similarly, when the Poisson process  $P^{i,i-1}$ has a jump, and there is  a particle at $i$ and a hole at $i-1,$  the particle at $i$ and the hole at $i-1$ exchange positions. Note that under these two rules,  no particle enters or exists through the sites $i_{n},n\in \Z,$  during the time interval $[0,t],$ so it suffices to apply the two rules inside each of the finite boxes $\{i_{n},\ldots,i_{n+1}\}$  to construct the process up to time $t$.  Inside each of the boxes, there are a.s. finitely many jumps and  no two jumps happen at the same time, so the graphical construction is well-defined.   

The graphical construction  allows us to couple different ASEPs together. Given a sequence of initial configurations $\zeta_{0}^{K}\in \{0,1\}^{\Z},K\in\Z_{\geq 1},$ we use the same collection of Poisson processes $(P^{i,i+1},P^{i,i-1},i\in\Z)$ 
to graphically construct all ASEPs $(\zeta_{t}^{K},t\geq 0, K\in\Z_{\geq 1}).$
This coupling is henceforth referred to as the \textit{basic coupling}.
An important consequence of the basic coupling is that if $(\zeta_{t}^{K},\zeta_{t}^{K^{\prime}},t\geq 0)$ are coupled via the basic coupling,
and  if for all $j\in \Z$ we have  $\zeta_{0}^{K}(j)\leq \zeta_{0}^{K^{\prime}}(j),$ it follows  that for all $t\geq 0$ we have   $\zeta_{t}^{K}(j)\leq \zeta_{t}^{K^{\prime}}(j)$
for all $j\in \Z.$

Let us now draw some consequences from the graphical construction and the basic coupling.

\subsubsection{Approximation by finite ASEPs}\label{finite}
Here we note  that we may arbitrarily well approximate the position of a particle in an infinite ASEP by that of a particle in a finite ASEP. 
 To see this, use the graphical construction with Poisson processes $(P^{i,i+1},P^{i,i-1},i\in\Z)$ to obtain an ASEP $(\zeta_{t},t\geq 0)$  starting from $\zeta_{0}$.
 We now graphically construct a finite  ASEP   $(\zeta^{L}_{t},t\geq 0)$ on $\{0,1\}^{\{-L,\ldots,L\}}, $ where particles move in $\{-L,\ldots,L\}$ and start from
 $\zeta^{L}_{0}(j):=\zeta_{0}(j),|j|\leq L$.  The graphical construction of the finite ASEP uses a finite subfamily of the Poisson processes used for the construction of $(\zeta_{t},t\geq 0)$, namely the Poisson processes
  $(P^{i,i+1},P^{i,i-1},|i|\leq L-1).$ All the other Poisson processes  $(P^{i,i+1},P^{i,i-1},|i|\geq L)$ are not in use.
  
  Let now $r\in \{i:\zeta_{0}(i)=1\}$  and choose $L>r$.
Denote by $\sigma_{r}(t)$ the position at time $t$ of the particle initially at $r$ in the infinite ASEP $(\zeta_{t},t\geq 0)$. Likewise, we denote  by $\sigma_{r}^{L}(t)$ the position at time $t$ of the particle initially at $r$ in the finite ASEP $(\zeta_{t}^{L},t\geq 0)$. In the graphical construction, $\sigma_{r}(t)$ a.s. depends only on what happens in a finite box which contains $r$. Thus, in particular, as $L\to\infty$,  $\sigma_{r}^{L}(t)$ and $\sigma_{r}(t)$ coincide: We have that 
\begin{equation}\label{Lfinite}
\lim_{L\to+\infty}\Pb(\sigma_{r}^{L}(t)=\sigma_{r}(t))=1.
\end{equation}

 \subsubsection{Harris correlation inequality}
 Intuitively, it is clear  that the event that a particle is to the right of some position $s_{1}$ should be positively correlated with the event that another particle is to the right of some position $s_{2}$. The result that we use to show this  goes back to Harris \cite{Har77}. The version in which we use Harris's result  is  Theorem 2.14 of \cite{Li85b}, see in particular  the remarks  made directly after  Theorem 2.14 of \cite{Li85b} which apply in our case.
 The statement is as follows.
  \begin{prop} \label{FKG}Let  $(\zeta_{t},t\geq 0)$ be an ASEP with a deterministic initial configuration  that has at least two particles, i.e. $|\{i:\zeta_{0}(i)=1\}|\geq 2$. Let $r,r^{\prime}\in \{i:\zeta_{0}(i)=1\}$
 and $r\neq r^{\prime}$. We denote by $\sigma_{r}(t), \sigma_{r^{\prime}}(t)$ the position at time $t$ of the particles initially at $r,r^{\prime}$.  Then we have for $s_{1},s_{2}\in \Z$ 
 \begin{equation}
 \Pb\left(\{\sigma_{r}(t)\geq s_{1}\}\cap\{\sigma_{r^{\prime}}(t)\geq s_{2}\}\right)\geq 
 \Pb(\{\sigma_{r}(t)\geq s_{1}\}) \Pb (\{\sigma_{r^{\prime}}(t)\geq s_{2}\} ).
 \end{equation}
 \end{prop}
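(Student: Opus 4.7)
The plan is to invoke Theorem 2.14 of \cite{Li85b} (Harris's correlation inequality), which states that for an attractive Markov process started from a deterministic initial configuration, any two increasing events in the configuration $\eta_t$ at a fixed time $t$ are positively correlated. That ASEP is attractive is immediate from the basic coupling recalled above: if $\zeta_0 \leq \zeta_0'$ pointwise, then $\zeta_t \leq \zeta_t'$ pointwise for all $t \geq 0$.

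The main work is therefore to reformulate the particle-position events $\{\sigma_r(t) \geq s_1\}$ and $\{\sigma_{r'}(t) \geq s_2\}$ as increasing events in the time-$t$ configuration. Here I use the crucial fact that the basic coupling preserves the relative order of particles: a particle never overtakes a neighbour. Consequently, if $m := |\{j > r : \zeta_0(j) = 1\}|$ denotes the number of particles initially strictly to the right of $r$, then the particle at $\sigma_r(t)$ still has exactly $m$ particles strictly to its right in $\eta_t$. Hence
$$\{\sigma_r(t) \geq s\} \;=\; \Bigl\{\sum_{j \geq s}\eta_t(j) \geq m+1\Bigr\},$$
which is manifestly an increasing event in $\eta_t$, and the analogous identity holds for $\sigma_{r'}(t)$. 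Once both events are written in this form, Theorem 2.14 of \cite{Li85b} applied to the law of $\eta_t$ yields the desired inequality.

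The one subtlety I expect to be the main obstacle is the case $m = \infty$, which occurs when infinitely many particles sit to the right of $r$. In that situation the rank-from-the-right reformulation degenerates. To bypass this, I would first prove the inequality for the finite ASEPs $(\zeta^L_t)_{t\geq 0}$ on $\{-L,\ldots,L\}$ from Section \ref{finite}, where every particle has a well-defined finite rank and the above increasing-event reformulation applies without modification. Theorem 2.14 of \cite{Li85b} then gives
$$\Pb\bigl(\{\sigma^L_r(t) \geq s_1\}\cap\{\sigma^L_{r'}(t) \geq s_2\}\bigr) \;\geq\; \Pb(\sigma^L_r(t) \geq s_1)\,\Pb(\sigma^L_{r'}(t) \geq s_2),$$
and passing to the limit $L\to\infty$ using \eqref{Lfinite} preserves the inequality, since by \eqref{Lfinite} the finite-ASEP indicators converge in probability to their infinite-ASEP counterparts. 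This yields the claim in full generality.
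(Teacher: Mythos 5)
Your overall architecture (reduce to finite ASEPs, rewrite $\{\sigma_r(t)\geq s\}$ as a tail-counting event using order preservation of particles, apply Theorem 2.14 of \cite{Li85b}, pass to the limit via \eqref{Lfinite}) matches the paper's proof, and your identification of the events --- $\{\sigma_r(t)\geq s\}=\{\sum_{j\geq s}\eta_t(j)\geq m+1\}$ with $m$ the initial rank from the right --- is exactly the choice $f=\mathbf{1}_{A_1}$, $g=\mathbf{1}_{A_2}$ made in the paper. However, there is a genuine gap in how you invoke Theorem 2.14 of \cite{Li85b}. That theorem is not simply ``attractive $\Rightarrow$ increasing events are positively correlated at time $t$'': besides monotonicity it requires that the process only jumps between states that are \emph{comparable} in the chosen partial order. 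For the coordinatewise order $\zeta\leq\zeta'$ that you use to certify attractivity, this hypothesis fails for ASEP: a particle jump from $i$ to $i+1$ turns $\zeta(i)$ from $1$ to $0$ and $\zeta(i+1)$ from $0$ to $1$, so the pre- and post-jump configurations are incomparable. Moreover the conclusion you rely on is actually false in that generality: for a single particle started deterministically at $i$, the coordinatewise-increasing events $\{\eta_t(i)=1\}$ and $\{\eta_t(i+1)=1\}$ are disjoint yet each has positive probability, hence they are negatively correlated. So the theorem cannot be applied with the pointwise order.

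The repair --- and this is the one substantive idea of the paper's proof that your proposal is missing --- is to equip $\{0,1\}^{\{-L,\ldots,L\}}$ with the partial order $\zeta\triangleright\zeta'\iff\sum_{j=l}^{L}\zeta(j)\leq\sum_{j=l}^{L}\zeta'(j)$ for all $l$. Under this order a rightward jump increases the configuration and a leftward jump decreases it, so all jumps are between comparable states; one must then re-verify attractivity of the basic coupling with respect to $\triangleright$ (the paper does this by a first-violation-time argument), while condition (i) of the theorem is trivial because the initial law is a Dirac mass. Your events survive this change unharmed: $\{\sum_{j\geq s}\zeta(j)\geq m+1\}$ is increasing for $\triangleright$ just as it is for the coordinatewise order. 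With this substitution, your argument --- including the rank-preservation reformulation, the finite-volume truncation, and the $L\to\infty$ limit via \eqref{Lfinite} --- goes through and coincides with the paper's.
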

 \begin{proof}
 We will show the statement for finite ASEPs,  and then use the approximation \eqref{Lfinite}. So let $L\in \Z_{\geq 1}$  with $L>r,r^{\prime}$ and consider the 
 finite ASEP $(\zeta_{t}^{L},t\geq 0)$ on $\{0,1\}^{\{-L,\ldots,L\}}$ starting from 
 $\zeta_{0}^{L}(j)=\zeta_{0}(j),|j|\leq L$ that was constructed via coupling  in Subsection \ref{finite}.   We denote  by $\sigma_{r}^{L}(t), \sigma_{r^{\prime}}^{L}(t)$ the position  at time $t$ of the particles initially at $r,r^{\prime}$
 in the ASEP  $(\zeta_{t}^{L},t\geq 0)$.
 
 The general statement that we wish to apply is  Theorem 2.14 of \cite{Li85b}.

 For this, we need to define a partial order $\triangleright$ on $\{0,1\}^{\{-L,\ldots,L\}}.$ The  partial order $\triangleright$  we use is 
 \begin{equation}
 \zeta \triangleright  \zeta^{\prime} \iff \sum_{j=l}^{L}\zeta(j)\leq \sum_{j=l}^{L}\zeta^{\prime}(j) \quad \mathrm{for \,\, all\,}l\in \{-L,\ldots,L\}.
\end{equation}  
We denote by $\Pb_{t}$ the distribution of $\zeta_{t}^{L}$ on   $\{0,1\}^{\{-L,\ldots,L\}}.$ We say that $f:\{0,1\}^{\{-L,\ldots,L\}}\to \R$ is increasing w.r.t.  $\triangleright$ if $\zeta \triangleright \zeta^{\prime}$ implies 
$f(\zeta)\leq f(\zeta^{\prime})$. For increasing functions $f,g$   we say that they are positively correlated under $\Pb_{t}$  if
\begin{equation}\label{poscor}
\int fg  \dx\mathbb{P}_{t}\geq \int f  \dx\mathbb{P}_{t}\int  g\dx \mathbb{P}_{t}.
\end{equation}

 Theorem 2.14 of \cite{Li85b} shows that \eqref{poscor} holds for all increasing functions if the following three conditions are met: i) increasing functions are positively correlated under $\Pb_{0}$ ii) ASEP is attractive w.r.t. the partial order $\triangleright$  and iii) ASEP can only jump from a state $\zeta$ to a state $\hat{\zeta}$  that satisfies $\zeta \triangleright  \hat{\zeta}$  or 
 $\hat{\zeta} \triangleright  \zeta.$
 Condition i) is trivially true because $\mathbb{P}_{0}$ is a dirac measure.
  To check ii), it suffices to show that if $\zeta_{0}\triangleright\zeta_{0}^{\prime},$ then under the basic coupling  for all $t\geq 0$ we have $\zeta_{t}\triangleright\zeta_{t}^{\prime}.$ Assume to the contrary 
that  $T:=\inf\{\ell: \zeta_{\ell}\triangleright\zeta_{\ell}^{\prime} \mathrm{\, does \,not\,hold}\}<\infty$. Then, there is an $l$ such that 
  \begin{equation}\label{aligneddd}
  \begin{aligned}
 &\sum_{j=l}^{L}\zeta_{T}(j) > \sum_{j=l}^{L}\zeta_{T}^{\prime}(j)
  \\&  \sum_{j=l}^{L}\zeta_{T^{-}}(j)= \sum_{j=l}^{L}\zeta_{T^{-}}^{\prime}(j),
  \end{aligned}
  \end{equation}
   where $\zeta_{T^{-}}^{L}:=\lim_{t\nearrow T}\zeta_{t}^{L}$ denotes the state just before $T$.
One of the two possibilities for \eqref{aligneddd} to be true  is that a particle in the $\zeta^{\prime}-$ process  jumps from $l$ to $l-1$ at time $T,$ but this jump does not happen in the $\zeta-$process. This  requires $\zeta_{T^{-}}^{\prime}(l-1)<\zeta_{T^{-}}(l-1)$ which in turn  implies 
  \begin{align*}
  \sum_{j=l-1}^{L}\zeta_{T^{-}}(j) =1+\sum_{j=l-1}^{L}\zeta_{T^{-}}^{\prime}(j),
  \end{align*}
  contradicting $\zeta_{T^{-}}\triangleright\zeta_{T^{-}}^{\prime}.$
 The other possibility for \eqref{aligneddd} to be true is that 
 a particle in the $\zeta-$ process  jumps from $l-1$ to $l$ at time $T,$ but this jump does not happen in the $\zeta^{\prime}-$process. But this requires $\zeta_{T^{-}}^{\prime}(l)>\zeta_{T^{-}}(l)$ which in turn  implies 
  \begin{align*}
 \sum_{j=l+1}^{L}\zeta_{T^{-}}(j)= 1+\sum_{j=l+1}^{L}\zeta_{T^{-}}^{\prime}(j),
  \end{align*}  again 
 contradicting $\zeta_{T^{-}}\triangleright\zeta_{T^{-}}^{\prime}.$ This shows ii).

Let us check iii).
Assume  $\tau$ is a random time point where a particle jumps. We easily see that  we always have 
\begin{equation}
\zeta_{\tau^{-}}^{L}\triangleright \zeta_{\tau}^{L} \mathrm{\quad or\quad}
\zeta_{\tau}^{L}\triangleright\zeta_{\tau^{-}}^{L},
\end{equation}
depending on whether at time $\tau$ a particle jumped to the right, in which case $\zeta_{\tau^{-}}^{L}\triangleright \zeta_{\tau}^{L}, $ or it jumped to the left, in which case $\zeta_{\tau}^{L}\triangleright\zeta_{\tau^{-}}^{L}. $ So iii) holds.

In conclusion, we know that \eqref{poscor} holds, it remains to choose $f$ and $g$. For this, we define the integers
\begin{equation}
R_{1}=\sum_{j=r}^{L}\zeta_{0}(j)\quad R_{2}=\sum_{j=r^{\prime}}^{L}\zeta_{0}(j).
\end{equation}
Define the sets $A_{1}=\{\zeta:\sum_{j=s_{1}}^{L}\zeta(j)\geq R_{1}\},$
$A_{2}=\{\zeta:\sum_{j=s_{2}}^{L}\zeta(j)\geq R_{2}\}$ and $f=\mathbf{1}_{A_{1}},g=\mathbf{1}_{A_{2}}$.
We may then conclude 
\begin{equation}
\begin{aligned}\label{necnec}
 \Pb\left(\{\sigma_{r}^{L}(t)\geq s_{1}\}\cap\{\sigma_{r^{\prime}}^{L}(t)\geq s_{2}\}\right)=\int fg \dx \mathbb{P}_{t}&\geq \int f  \dx\mathbb{P}_{t}\int  g\dx \mathbb{P}_{t} 
 \\&=\Pb(\{\sigma_{r}^{L}(t)\geq s_{1}\}) \Pb (\{\sigma_{r^{\prime}}^{L}(t)\geq s_{2}\} ).
\end{aligned}
\end{equation}
Sending $L\to +\infty$  in \eqref{necnec} and using \eqref{Lfinite} finishes the proof.

 \end{proof}

\subsection{Method of proof}\label{method} Here we outline the strategy to prove Theorem \ref{GUEGUE}; the same strategy is  used to prove Theorem \ref{GUEGUE2} as well, but the parameter $\nu$  needs to be dealt with.
We  mostly  use  probabilistic tools such as couplings and  bounds on mixing times in this paper.

To prove the convergence \eqref{Jamie} of Theorem \ref{GUEGUE}, 
we provide an upper and a lower bound for\[ \lim_{t \to \infty}\Pb\left( x_{M+\lambda M^{1/3}}(t)\geq -\xi M^{1/3}\right)\]  and show that the two bounds converge, as $M\to\infty$, to 
\[F_{\GUE}(-\lambda)F_{\GUE}(\xi-\lambda).\] For the upper bound, we define the  initial data
\begin{align*}
&x_{n}^{A}(0)=
-n-\lfloor (p-q)(t-Ct^{1/2})\rfloor  \quad &&\mathrm{for} \quad   n \geq 1 
\\& x_{n}^{B}(0)=
-n  \quad  &&\mathrm{for} \quad   n \geq  -\lfloor (p-q)(t-Ct^{1/2})\rfloor,
\end{align*}
and denote by $ (\eta_{\ell}^{A})_{\ell  \geq 0},(\eta_{\ell}^{B})_{\ell  \geq 0}$ the ASEPs started from these initial data.
The intuition behind these two initial data  is that the particle  $ x_{n} $ should behave like  $x_{n}^{A}$ if it does not enter the shock, and like $ x_{n}^{B}$ if  it enters. 
This intuition is correct for TASEP, but in ASEP this only provides an upper bound:
Defining  for $n\geq 1$ the minimum 
\begin{equation}\label{yn}
y_{n}(t)=\min\{x_{n}^{A}(t),x_{n}^{B}(t)\},
\end{equation}
then   $y_{n}$ and $x_{n}$ are related under the basic coupling  as follows.
\begin{prop}\label{sepprop} Let  the ASEPs $(\eta_{t},\eta^{A}_{t},\eta^{B}_{t},t\geq 0)$ 
be coupled through the basic coupling given in Section \ref{BC}. 
In TASEP, we have for $n\geq 1$
\begin{equation}\label{TY}
y_{n}(t)=x_{n}(t),
\end{equation}
whereas for ASEP we have 
\begin{equation}\label{Y}
y_{n}(t)\geq x_{n}(t).
\end{equation}\end{prop}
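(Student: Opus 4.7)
My plan is to establish $x_n(t) \leq \min\{x_n^A(t), x_n^B(t)\}$ under basic coupling (giving the ASEP statement and the "$\leq$" half of the TASEP equality), and then to upgrade this to an equality in TASEP by a separate induction on Poisson events. Write $K := \lfloor (p-q)(t - C t^{1/2}) \rfloor$. The three initial configurations satisfy $\eta_0^A \leq \eta_0 \leq \eta_0^B$ pointwise: $\eta_0$ is obtained from $\eta_0^A$ by adding the right pack at $\{0,1,\ldots,K\}$, and from $\eta_0^B$ by emptying the gap $\{-K,\ldots,-1\}$. Attractiveness of the basic coupling propagates this, so $\eta_t^A \leq \eta_t \leq \eta_t^B$ for all $t \geq 0$. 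Since particle orderings are preserved by basic coupling, for each $n \geq 1$ the label-$n$ particle coincides with the $(n+K+1)$-th rightmost particle in $\eta_t$ and in $\eta_t^B$, and with the $n$-th rightmost in $\eta_t^A$. Monotonicity of the $k$-th rightmost position in the configuration then immediately yields $x_n(t) \leq x_n^B(t)$.

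The comparison $x_n(t) \leq x_n^A(t)$ is more delicate, since the labels have different ranks in the two systems. I would argue via a first-bad-time contradiction: set $T := \inf\{t \geq 0 : x_n(t) > x_n^A(t) \text{ for some } n \geq 1\}$. A case analysis using $\eta_t^A \leq \eta_t$ shows that right-jump events can never push $d_n := x_n^A - x_n$ from $0$ to $-1$, so in TASEP $T = +\infty$ trivially. In ASEP, assuming $T < \infty$, the event at $T$ must be a left jump $P^{p,p-1}$ acting on some label $n$ with $d_n(T^-) = 0$, i.e.\ $x_n(T^-) = x_n^A(T^-) = p$, $\eta_{T^-}(p-1) = 1$ (blocking $\eta$) and $\eta_{T^-}^A(p-1) = 0$ (so the label-$n$ particle in $\eta^A$ jumps left). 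By the label ordering in $\eta$, the only candidate for the occupant of $p-1$ is label $n+1$, hence $x_{n+1}(T^-) = p-1$; while $\eta_{T^-}^A(p-1) = 0$ forces label $n+1$ in $\eta^A$ to a position $\leq p-2$. Thus $x_{n+1}(T^-) = p-1 > p-2 \geq x_{n+1}^A(T^-)$ already before $T$, contradicting the minimality of $T$. Therefore $x_n(t) \leq x_n^A(t)$, and combined with the previous paragraph, $x_n(t) \leq y_n(t)$ for ASEP.

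For the TASEP equality $x_n(t) = y_n(t)$ I additionally show $x_n(t) \geq y_n(t)$ by induction on Poisson events, maintaining $x_n(t) = \min\{x_n^A(t), x_n^B(t)\}$ through each right jump $P^{p,p+1}$. The potentially dangerous sub-cases all feature $\eta(p+1) = 0$ but $\eta^B(p+1) = 1$, so that the label-$n$ particle in $\eta$ jumps while the corresponding one in $\eta^B$ is blocked; by the label ordering in $\eta^B$ the occupant of $p+1$ must be label $n-1$, forcing $x_{n-1}^B(t^-) = p+1$, while $\eta(p+1) = 0$ forces $x_{n-1}(t^-) \geq p+2$, contradicting the already-established $x_{n-1}(t) \leq x_{n-1}^B(t)$ (which holds for every $n-1 \geq -K$ by the same $k$-th rightmost argument as in the first paragraph). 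The principal obstacle is the ASEP step of the second paragraph: despite $\eta_t \geq \eta_t^A$, left jumps can a priori push $x_n^A$ below $x_n$, and the minimal-violation argument is the key structural tool that rules this out by forcing any minimal violation at label $n$ to trigger a prior violation at label $n+1$, which is impossible by well-ordering.
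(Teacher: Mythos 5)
Your treatment of the ASEP inequality \eqref{Y} is correct and takes a genuinely different route from the paper. For $x_{n}\leq x_{n}^{B}$ you use that label $n$ has the same rank (the $(n+K+1)$-th rightmost, $K=\lfloor(p-q)(t-Ct^{1/2})\rfloor$) in $\eta$ and in $\eta^{B}$ together with $\eta_{\ell}\leq\eta_{\ell}^{B}$; this is essentially the paper's mass-counting argument. For $x_{n}\leq x_{n}^{A}$ the paper rules out infinitely many coincidences $x_{n'}^{A}(t)=x_{n''}(t)$, whereas your first-violation argument (a right jump cannot separate the two because $\eta^{A}\leq\eta$, and a left jump separating them at label $n$ forces $x_{n+1}(T^{-})=p-1>x_{n+1}^{A}(T^{-})$, a strictly earlier violation) is local and arguably cleaner. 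Both you and the paper implicitly use that a.s.\ only finitely many labels are ever discrepant up to time $t$, so that $T$ is attained and realized by a single Poisson event; that is a shared, minor omission.

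The genuine gap is in the TASEP equality \eqref{TY}, which the paper obtains from Sepp\"al\"ainen's variational coupling (Lemma 2.1 of \cite{Sep98c}) rather than by an event-by-event induction. Your case analysis identifies the wrong dangerous transition. The sub-case you exclude ($\eta(p+1)=0$, $\eta^{B}(p+1)=1$, so $x_{n}$ jumps while $x_{n}^{B}$ is blocked) would violate $x_{n}\leq x_{n}^{B}$, which you have already established by rank monotonicity; it is not where the equality can break. Since $x_{n}\leq y_{n}$ is known, the equality can only fail via $x_{n}<\min\{x_{n}^{A},x_{n}^{B}\}$, and the one non-vacuous way this occurs at a single right jump $P^{p,p+1}$ is: $x_{n}(s^{-})=x_{n}^{A}(s^{-})=p<x_{n}^{B}(s^{-})$, $\eta_{s^{-}}(p+1)=1$, $\eta^{A}_{s^{-}}(p+1)=0$, so that $x_{n}^{A}$ escapes to $p+1$ while $x_{n}$ is blocked and the minimum rises to at least $p+1$. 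Nothing in your proposal excludes this configuration, and it is not excluded by $\eta^{A}\leq\eta$. It can be excluded, but only by invoking the inductive \emph{equality} at label $n-1$: the occupant of $p+1$ in $\eta$ is $x_{n-1}(s^{-})=p+1$, while $x_{n-1}^{A}(s^{-})\geq p+2$, so $x_{n-1}=\min\{x_{n-1}^{A},x_{n-1}^{B}\}$ forces $x_{n-1}^{B}(s^{-})=p+1$ and hence $x_{n}^{B}(s^{-})\leq p$, contradicting $x_{n}^{B}(s^{-})>p$. This requires a joint induction over all labels and a separate boundary argument at $n=1$, where the invariant at label $0$ is not part of the claim and one needs instead $x_{0}(\ell)=x_{0}^{B}(\ell)$ (true in TASEP because the labels $-K\leq m\leq 0$ see the same particles ahead of them in $\eta$ and $\eta^{B}$ and hence have identical trajectories under the basic coupling). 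As written, your induction does not close.
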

\begin{proof}
The identity \eqref{TY} is an application   of the coupling provided in Lemma 2.1 of \cite{Sep98c}. Indeed, consider the shifted step initial data $\zeta^{l}=\mathbf{1}_{\Z\leq l}$, and denote for $i\geq 0$  by $\zeta^{l}(i,t)$ the position at time $t$ 
of the particle  initially at position $l-i$ in the TASEP started from $\zeta^{l}$. Let now the TASEPs started from $\eta_{0},\zeta^{l},l\in \Z,$ be coupled through the basic coupling. 
Then  Lemma 2.1 of \cite{Sep98c} shows that for TASEP we have
\begin{equation*}
x_{n}(t)=\min_{N(t)\leq i\leq n}\zeta^{x_{i}(0)}(n-i,t),
\end{equation*}
where for brevity  we defined $N(t):= -\lfloor (p-q)(t-Ct^{1/2})\rfloor$. Then we have $x_{n}^{A}(t)=\zeta^{x_{1}(0)}(n-1,t)$ and $x_{n}^{B}(t)=\zeta^{x_{N(t)}(0)}(n-N(t),t)$. From the relation $\zeta^{l}(i,t)\geq \zeta^{l+1}(i+1,t),$ valid under the basic coupling,  it is easy to see
that
\begin{equation*}
\zeta^{x_{1}(0)}(n-1,t)=\min_{1\leq i\leq n}\zeta^{x_{i}(0)}(n-i,t)\quad\quad  \zeta^{x_{N(t)}(0)}(n-N(t),t)=\min_{N(t)\leq i\leq 0}\zeta^{x_{i}(0)}(n-i,t),
\end{equation*}
implying \eqref{TY}.

To prove the inequality \eqref{Y}, let us first show $x_{n}^{A}(t)\geq x_{n}(t)$.
Assume to the contrary $x_{n}^{A}(t)< x_{n}(t).$ Since $\eta_{t}(j)\geq \eta_{t}^{A}(j),j\in \Z,$ this implies that for all $n^{\prime}\in \{n,n+1,\ldots\},$ there is an $n^{\prime \prime}>n^{\prime}$ such that 
$x_{n^{\prime}}^{A}(t)=x_{n^{\prime\prime}}(t)$. However, since $x_{n^{\prime}}^{A}(0)=x_{n^{\prime}}(0)$ and $\lim_{N\to+\infty}\Pb(x_{N}(0)=x_{N}(t))=1,$ we cannot have 
$x_{n^{\prime}}^{A}(t)=x_{n^{\prime\prime}}(t)$ for infinitely many integers $n^{\prime}<n^{\prime\prime}$. Thus, we have $x_{n}^{A}(t)\geq x_{n}(t)$.
To see that  $x_{n}^{B}(t)\geq x_{n}(t)$ holds, assume again to the contrary that  $x_{n}^{B}(t)< x_{n}(t).$  This implies that
\begin{equation*}
\sum_{j=x_{n}(t)}^{\infty}\eta_{t}(j)>\sum_{j=x_{n}(t)}^{\infty}\eta_{t}^{B}(j),
\end{equation*}
contradicting $\eta_{t}(j)\leq \eta_{t}^{B}(j),j\in \Z$.
\end{proof}
That we only have an  inequality  in \eqref{Y} in the general asymmetric case is one of the main reasons why  proving Theorems \ref{GUEGUE} and \ref{GUEGUE2} is harder for   ASEP  than TASEP.

 To get an upper bound in \eqref{Jamie}, we show in Corollary  \ref{GUEGUEcor} that $x_{n}^{A}(t),x_{n}^{B}(t)$ decouple as $t\to\infty,$ and that \eqref{Jamie} holds with $x_{M+\lambda M^{1/3}}(t)$    replaced by $y_{M+\lambda M^{1/3}}(t)$. A key tool we use is the slow decorrelation method \cite{Fer08}, \cite{CFP10b}, as well as proving that certain particles remain in disjoint space-time regions,  see Section \ref{decsec}. This gives the desired upper bound for 
 $ \lim_{t \to \infty}\Pb\left( x_{M+\lambda M^{1/3}}(t)\geq -\xi M^{1/3}\right)$.

 For the lower bound, let us describe a general strategy which in this paper is applied in Proposition \ref{X0} and Theorem \ref{lowerthm}.
 Suppose for some (arbitrary) ASEP particle $z_{N}$ 
 we wish to prove
 \begin{equation*}\label{genlowbnd}
 \lim_{N\to \infty}\lim_{t\to\infty}  \Pb (z_{N}(t)\geq R(N))\geq F 
 \end{equation*}
where  $R(N) \in \Z,$ and $ F\in [0,1],$ e.g. $F=F_{\GUE}(-\lambda)F_{\GUE}(\xi-\lambda)$.
 The way we proceed is as follows: For $\chi>0$ we  construct an event $\mathcal{E}_{t-t^{\chi}}$ which depends only on what happens in ASEP during  $[0,t-t^\chi].$
 We assume that  $ \Pb( \mathcal{E}_{t-t^{\chi}}) \geq F(N,t)$ with $ \lim_{N\to \infty}\lim_{t\to\infty}F(N,t)=F$ and furthermore, 
 we assume to  have a relation 
 \begin{equation}\label{allgemeine}
  \mathcal{E}_{t-t^{\chi}}\subseteq \{ z_{N}(t)\geq \tensor[^-]{x}{_{N}}(t)\}.
 \end{equation}
 Here, $\tensor[^-]{x}{_{N}}(t)$ is a particle in a countable state space ASEP starting at time $t-t^{\chi}$ from a deterministic initial configuration; in particular 
 $\tensor[^-]{x}{_{N}}(t)$ is independent of $\mathcal{E}_{t-t^{\chi}}$.
 The point is that for $\chi>0$ sufficiently large, this ASEP has  enough time to  come very close to equilibrium,  and the equilibrium is such that this implies 
 \begin{equation}\label{such that}
 \Pb( \tensor[^-]{x}{_{N}}(t)\geq R(N))\geq 1-\varepsilon(N,t),
 \end{equation}
 with $\varepsilon(N,t)$ going to zero as $N,t\to \infty$.
We can then compute
  \begin{align*}
 \Pb (z_{N}(t)\geq R(N))\geq   \Pb (   \mathcal{E}_{t-t^{\chi}}\cap \{z_{N}(t)\geq R(N)\} )&\geq  \Pb (\{  \tensor[^-]{x}{_{N}} (t)\geq R(N)\} \cap   \mathcal{E}_{t-t^{\chi}})
 \\&= \Pb (\{  \tensor[^-]{x}{_{N}} (t)\geq R(N)\} ) \Pb (  \mathcal{E}_{t-t^{\chi}}),
 \end{align*}
and from this we obtain 
 \begin{align*}
 \lim_{N\to\infty}\lim_{t \to\infty}\Pb (z_{N}(t)\geq R(N))\geq   \lim_{N\to\infty}\lim_{t \to\infty} \Pb (\{  \tensor[^-]{x}{_{N}} (t)\geq R(N)\} ) \Pb (  \mathcal{E}_{t-t^{\chi}})\geq F.
 \end{align*}

Let us briefly describe how the general strategy is used for the last step of the proof of Theorem \ref{GUEGUE}.  In Section \ref{lowsec} we  consider for $\chi<1/2$  and $\delta>0$ small  the event   \begin{equation}
\label{passauf}\mathcal{E}_{t-t^{\chi}}=\{x_{0}(t-t^{\chi})\geq M+(\lambda-\xi)M^{1/3}\}
\cap \{x_{M+\lambda M^{1/3}}(t-t^{\chi})>-t^{ \delta} \}.
\end{equation}
Furthermore, the $F,R$ from the general strategy will be given by  $F=F_{\GUE}(-\lambda)F_{\GUE}(\xi-\lambda)$ and $R=-\xi M^{1/3}$.
An important part of the work is to   show that asymptotically $ \lim_{M\to \infty}\lim_{t\to\infty} \Pb( \mathcal{E}_{t-t^{\chi}}) \geq F$ . 
For this, we  apply  the Harris  inequality of Proposition \ref{FKG} to \eqref{passauf}. Then we  again have to employ the general strategy in Proposition \ref{X0} to show 
\begin{equation*}
\lim_{M \to \infty}\lim_{t\to \infty}\Pb(x_{0}(t-t^{\chi})>M+M^{1/3}(\lambda-\xi))\geq F_{\GUE}(\xi-\lambda).
\end{equation*}
Finally, various  couplings arguments allow us to show \begin{equation*} \lim_{M\to \infty}\lim_{t\to\infty} \Pb(x_{M+\lambda M^{1/3}}(t-t^{\chi})>-t^{ \delta} )= F_{\GUE}(-\lambda).\end{equation*}
 This shows $ \lim_{M\to \infty}\lim_{t\to\infty} \Pb( \mathcal{E}_{t-t^{\chi}}) \geq F.$
 
 The next step is to start at time $t-t^{\chi}$ a countable state space  ASEP from
\begin{equation*}
\tensor[^-]{\eta}{_{}}(j)=\mathbf{1}_{\{-\lfloor t^{\delta}\rfloor,\ldots,-\lfloor t^{\delta}\rfloor+M+\lambda M^{1/3}-1\}}(j)+\mathbf{1}_{\{j\geq M+(\lambda-\xi) M^{1/3}\}}(j),\quad j\in \Z.
\end{equation*}
We show the relation \eqref{allgemeine}, specifically, on  $\mathcal{E}_{t-t^{\chi}}$,  $ z_{N}(t)= x_{M+\lambda M^{1/3}}(t)$ is bounded from below by the leftmost particle  $ \tensor[^-]{x}{_{M+\lambda M^{1/3}}}(t)$ of $\tensor[^-]{\eta}{_{t}}.$ 
In Proposition \ref{DOIT}, we give lower bounds for the position of the leftmost particle in countable state space ASEPs. For this we use the results  on hitting/mixing times  of \cite{BBHM}, for recent major progress on this topic,  see \cite{LL19}.
From Proposition \ref{DOIT}  we get that  $(\tensor[^-]{\eta}{_{\ell}})_{\ell\geq t-t^{\chi}}$  has enough time to  come very close to its equilibrium  during $[t-t^{\chi},t]$ and that  in equilibrium,  $ \tensor[^-]{x}{_{M+\lambda M^{1/3}}}\geq -\xi M^{1/3}$ holds  with very high probability.
Furthermore,  $(\tensor[^-]{\eta}{_{\ell}})_{\ell\geq t-t^{\chi}}$ is independent of the event \eqref{passauf}, which will allow us to get  the desired 
lower bound. See Section \ref{lowsec} for the details.
\subsection{Outline}
In Section \ref{convsec} we collect convergence results for ASEP with step initial data 
that we need as input and prove the convergence to $ F_{\GUE}$ in a double limit (see Proposition \ref{FGUE}). In Section \ref{mixsec}, we first bound the position of the leftmost particle in a reversed step initial data. Then, as key tool, we control the position 
of particles using bounds on the mixing time (see Proposition \ref{DOIT}). In Section \ref{decsec} we employ the slow decorrelation method and bounds on particle positions  to prepare the proof of the decoupling of  $x_{n}^{A}(t),x_{n}^{B}(t)$ given in Section \ref{upsec}.  In Section \ref{upsec}, an upper bound for the limit \eqref{Jamie} is proven using this decoupling, and  \eqref{Thehound} is proven also.  Section \ref{lowsec} gives the required lower bound for \eqref{Jamie}. In Section \ref{adapt} we can then quickly prove Theorem \ref{GUEGUE}.
The proof of Theorem \ref{GUEGUE2} is identical in structure to the proof of Theorem \ref{GUEGUE}. In Sections \ref{convsec} - \ref{lowsec} we only deal with the initial data \eqref{IC}, in Section \ref{adapt} we explain how to adapt the results of Sections 
 \ref{convsec} - \ref{lowsec} to   prove Theorem \ref{GUEGUE2}.

\section{Convergence Results for ASEP with step initial data} \label{convsec}
 Let us start by defining the distribution functions which will appear throughout  this paper. 
\begin{defin}[ \cite{TW08b},\cite{GW90}] Let $s \in \R,M \in \Z_{\geq 1}$. We define, for $p\in (1/2,1),$  
\begin{equation} \label{def1}
F_{M,p}(s)=\frac{1}{2 \pi i} \oint  \frac{\dx \lambda}{\lambda} \frac{\det(I-\lambda K)}{\prod_{k=0}^{M-1}(1-\lambda (q/p)^{k})}
\end{equation}
where $K= \hat{K}\mathbf{1}_{(-s,\infty)}$  and 
$\hat{K}(z,z^{\prime})=\frac{p}{\sqrt{2 \pi}}e^{-(p^{2}+q^{2})(z^{2}+z^{\prime 2})/4+pqzz^{\prime}}$ and the integral is taken over a counterclockwise oriented contour enclosing the
singularities $\lambda=0,\lambda=(p/q)^{k},k=0,\ldots,M-1$.  For $p=1$, we define
\begin{equation*}
F_{M,1}(s)=\Pb\left(\sup_{0=t_{0}<\cdots <t_{M}=1}\sum_{i=0}^{M-1}[B_{i}(t_{i+1})-B_{i}(t_{i})]\leq s    \right),
\end{equation*}
where $B_{i},i=0,\ldots,M-1$ are independent standard Brownian motions. \end{defin}

 It follows from \cite{Bar01}, Theorem 0.7 that $F_{M,1}$ equals the distribution function of the largest eigenvalue of a $M \times M\, \GUE$  matrix. What is important to us here is that $F_{M,p}$ arises as limit law in ASEP,
 a result we cite in Theorem \ref{convthm} below (Theorem \ref{convthm} also vindicates our common denomination $F_{M,p}$ for all $p\in (1/2,1]$ even though $F_{M,1}$ looks different from $F_{M,p},p<1.$)

 For $p<1$, the following Theorem  was shown in \cite{TW08b}, Theorem 2, for TASEP, the result follows e.g.  from \cite{GW90}, Corollary 3.3, see Remark 3.1 of \cite{GW90} for further references. An alternative characterization of the limit \eqref{ASEPlimit}
 was given in \cite{BO17}, Proposition 11.1.
\begin{tthm}[Theorem 2 in \cite{TW08b},  Corollary 3.3 in \cite{GW90}]\label{convthm}
Consider ASEP with step initial data $x_{n}^{\mathrm{step}}(0)=-n,n\geq 1$.
Then  for every fixed $M\geq 1$ we have that 
\begin{equation}\label{ASEPlimit}
\lim_{t \to \infty}\Pb\left(x_{M}^{\mathrm{step}}(t)\geq (p-q)(t-st^{1/2})\right)=F_{M,p}(s).
\end{equation}
\end{tthm}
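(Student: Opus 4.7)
The plan is to split on $p$, since the available machinery differs sharply between the two cases.

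For $p=1$ I would go through last passage percolation. TASEP from step initial data is in the classical bijection with $\mathrm{Exp}(1)$ corner growth: if $L(M,n) = \max_\pi \sum_{(i,j)\in\pi} E_{i,j}$ is the last passage time on an $M \times n$ grid of i.i.d.\ $\mathrm{Exp}(1)$ weights, then $\Pb(x_M^{\mathrm{step}}(t) \geq -M+n) = \Pb(L(M,n) \leq t)$. Setting $n = \lfloor t - st^{1/2} + M\rfloor$, the event $\{x_M^{\mathrm{step}}(t) \geq t - st^{1/2}\}$ becomes $\{L(M,n) \leq n + s\sqrt{n} + o(\sqrt{n})\}$. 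For $M$ fixed and $n \to \infty$, Baryshnikov's theorem (and its Glynn--Whitt queueing-theoretic analogue) gives
\[ \frac{L(M,n) - n}{\sqrt n} \;\Longrightarrow\; \sup_{0=t_0<\cdots<t_M=1}\sum_{i=0}^{M-1}\bigl[B_i(t_{i+1}) - B_i(t_i)\bigr], \]
whose law is by definition $F_{M,1}$, so one reads off the limit directly.

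For $p<1$ the last passage coupling is unavailable, so I would instead start from Tracy--Widom's exact formula for the one-point function of ASEP with step initial data. That formula expresses $\Pb(x_M^{\mathrm{step}}(t) \geq n)$ as a contour integral in an auxiliary variable $\lambda$, against the factor $\prod_{k=0}^{M-1}(1 - \lambda(q/p)^k)^{-1}$, of a Fredholm-type determinant whose kernel involves $q$-Bessel-like functions evaluated at time $t$. Inserting the diffusive scaling $n = \lfloor (p-q)(t - st^{1/2})\rfloor$ and rescaling the spatial variables of the kernel as $\sqrt{t}\cdot z$, the $q$-Bessel factors admit a Gaussian saddle at the origin and collapse in the limit onto exactly the kernel $\hat K(z,z') = \frac{p}{\sqrt{2\pi}}e^{-(p^2+q^2)(z^2+z'^2)/4 + pq z z'}$ restricted to $(-s,\infty)$. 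The $\lambda$-contour and the outer product $\prod_{k=0}^{M-1}(1-\lambda(q/p)^k)^{-1}$ survive the passage to the limit verbatim, yielding the right-hand side of \eqref{def1}.

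The main obstacle is this second step: the steepest descent analysis must be carried out uniformly in the spatial arguments (and locally uniformly in $s$), while the $\lambda$-contour has to simultaneously encircle the geometric sequence of poles $\lambda = (p/q)^k$, $k = 0,\ldots, M-1$, which escapes to infinity, and stay clear of the moving singularities of the pre-limit kernel. Moreover, justifying the exchange of limit and Fredholm expansion is not automatic in the ASEP setting, since the pre-limit kernel is not trace class without further manipulation; one typically extracts a multiplicative conjugation from the kernel to produce term-by-term trace-norm bounds that dominate the series. This analytic bookkeeping is precisely the content of the asymptotic portion of \cite{TW08b}. Once these bounds are in place, comparing contours and kernels concludes the identification with $F_{M,p}(s)$.
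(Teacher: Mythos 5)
The paper does not prove this statement itself but imports it verbatim from the literature, and your two-pronged argument reconstructs exactly the proofs in the cited sources: the last passage percolation / Baryshnikov--Glynn--Whitt route for $p=1$ (as in \cite{GW90}, \cite{Bar01}) and the steepest-descent analysis of the exact contour-integral formula for $p<1$ (Theorem 2 of \cite{TW08b}). Your sketch is correct, including its identification of where the real technical work lies (uniform saddle-point estimates and justifying the limit of the Fredholm determinant, which in \cite{TW08b} is handled by Hadamard-type term-by-term bounds on the Fredholm series rather than trace-norm control), so there is nothing substantive to add.
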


We need to show that the  distribution function $F_{M,p}$ converges to $F_{\GUE}$ in the right scaling. To show this, we do not actually use the explicit formula \eqref{def1}, but rather the alternative characterization provided in \cite{BO17}, and the following proof is similar to that of Theorem 11.3 of \cite{BO17}.
\begin{prop}\label{FGUE}

For any fixed  $s \in \R$  we have 
\begin{equation}\label{two1}
\lim_{M \to \infty}F_{M,p}\left(\frac{2\sqrt{M}+sM^{-1/6}}{\sqrt{p-q}}\right)=F_{\GUE}(s).
\end{equation}

\end{prop}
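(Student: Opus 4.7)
The plan is to work from the alternative representation of $F_{M,p}$ given in Proposition 11.1 of \cite{BO17}, which recasts $F_{M,p}$ as a Fredholm determinant of an explicit integrable kernel. This avoids the awkward contour integral in \eqref{def1} and puts the problem into the standard form for a Tracy--Widom-type limit theorem. For $p=1$ the statement is the classical Tracy--Widom edge asymptotic for the largest GUE eigenvalue, since $F_{M,1}$ is the CDF of $\lambda_{\max}$ of an $M\times M$ GUE matrix and $\sqrt{p-q}=1$; so it suffices to treat $p\in(1/2,1)$ and essentially mimic the argument of Theorem 11.3 in \cite{BO17}.

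Concretely, I would first rewrite $F_{M,p}\!\left(\frac{2\sqrt M + sM^{-1/6}}{\sqrt{p-q}}\right)$ as the Fredholm determinant $\det(I-\mathcal{K}_M)_{L^2(s,\infty)}$ obtained from the Borodin--Olshanski kernel after the change of variable $z = \frac{2\sqrt M + x M^{-1/6}}{\sqrt{p-q}}$ and conjugating by the Jacobian $M^{-1/6}/\sqrt{p-q}$. The Gaussian factor $\exp\!\bigl(-(p^2+q^2)(z^2+z'{}^2)/4 + pq z z'\bigr)$ in $\hat K$ becomes, after this substitution, of the form $\exp\!\bigl(M\,\Phi(x,y;M^{-1/3})\bigr)$ for a phase $\Phi$ whose leading saddle sits precisely at the spectral edge $2\sqrt M/\sqrt{p-q}$.

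Next, I would carry out a saddle-point (steepest-descent) analysis of this rescaled kernel. The combinatorial prefactor $\prod_{k=0}^{M-1}\bigl(1-\lambda(q/p)^k\bigr)^{-1}$ contributes only through $\lambda$ near $0$ in the limit, since the poles $\lambda=(p/q)^k$ escape to $\infty$ (as $q/p<1$), and thus the $\lambda$-integral reduces to evaluation at $\lambda=0$ after deformation. The remaining local analysis of the Gaussian kernel at the edge is standard: the factor $\sqrt{p-q}$ in the scaling is exactly the one that cancels the $p,q$-dependent prefactors in the phase, producing the cubic Airy integrand $\int e^{\frac{1}{3}u^3 - xu}\,du$ in the limit. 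Thus one obtains pointwise $\mathcal{K}_M(x,y)\to K_2(x,y)$ on $(s,\infty)\times(s,\infty)$.

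The last step is to promote pointwise convergence of the kernel to convergence of the Fredholm determinant. For this I would establish a uniform Airy-type tail bound $|\mathcal{K}_M(x,y)|\le C\,e^{-c(x+y)}$ for $x,y\ge s$ and $M\ge M_0$, which together with Hadamard's inequality gives a dominated-convergence argument for each term of the Fredholm expansion. The main obstacle I anticipate is exactly this uniform tail/trace-class estimate: the shift by $2\sqrt M$ places the relevant variables deep in the Gaussian tail, and one must show that the steepest-descent contour can be chosen uniformly in $M$ so that the local cubic behavior at the saddle extends to a global exponential bound. Once that is in hand, the determinantal limit $F_{\GUE}(s)$ follows directly from the series expansion of $F_{\GUE}$ recorded above the statement.
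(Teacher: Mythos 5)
Your $p=1$ case matches the paper's (both reduce to the identification of $F_{M,1}$ with the law of the largest eigenvalue of an $M\times M$ GUE matrix and the classical edge limit), but for $p<1$ your plan has a genuine gap, and the paper explicitly avoids the route you take. The central problem is your treatment of the contour integral \eqref{def1}. The residue of $\frac{1}{\lambda}\,\frac{\det(I-\lambda K)}{\prod_{k=0}^{M-1}(1-\lambda (q/p)^{k})}$ at $\lambda=0$ is exactly $1$, so if ``the $\lambda$-integral reduces to evaluation at $\lambda=0$ after deformation'' were true, it would give $F_{M,p}\equiv 1$. The poles sit at $\lambda=(p/q)^{k}\ge 1$, $k=0,\dots,M-1$; they are enclosed by the contour by the very definition of \eqref{def1}, their number grows with $M$, and their locations spread to $+\infty$ as $M\to\infty$. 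They cannot be discarded by any deformation --- they carry the entire content of the distribution function. A second, related problem is that $\hat K(z,z^{\prime})=\frac{p}{\sqrt{2\pi}}e^{-(p^{2}+q^{2})(z^{2}+z^{\prime 2})/4+pqzz^{\prime}}$ is a fixed Mehler-type Gaussian kernel with no $M$-dependence and no oscillatory integral representation admitting a saddle; after the substitution $z=\frac{2\sqrt M+xM^{-1/6}}{\sqrt{p-q}}$ it is of order $e^{-2(p-q)M(1+o(1))}$, i.e.\ exponentially small at the edge rather than ``of the form $e^{M\Phi}$ with a saddle there''. The Airy kernel can only emerge from the delicate interplay between the Fredholm determinant of this Mehler kernel (whose spectrum is governed by Hermite functions) and the $M$ residues at $(p/q)^{k}$; that interplay is precisely the hard part of the Tracy--Widom asymptotic analysis, and your sketch does not engage with it.

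The paper's actual proof sidesteps \eqref{def1} entirely: it encodes the family $\{F_{M,p}\}_{M\ge 1}$ in a $\Z_{\geq 0}$-valued random variable $\xi_{r}$, identifies the $\mathfrak q$-Laplace transform of $\xi_{r}$ with the corresponding multiplicative functional of the discrete Hermite ensemble via Proposition 11.1 of \cite{BO17} (note this is an identity of $\mathfrak q$-Laplace transforms, not a Fredholm-determinant formula for $F_{M,p}$ ready for kernel asymptotics), transfers the limit law through the asymptotic-equivalence machinery of \cite{B17}, and then applies the discrete--continuous Hermite duality (Theorem 3.7 of \cite{BO17}) together with the classical GUE edge limit for the continuous Hermite ensemble. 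If you insist on a direct steepest-descent proof from \eqref{def1}, you would essentially have to reproduce the Tracy--Widom asymptotic analysis of ASEP, including the uniform control of the $M$ poles and the compensating exponential smallness of the kernel; as written, your argument would fail at the deformation step.
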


\begin{proof}
For TASEP, this follows  from \cite{Bar01}, Theorem 0.7  which shows that $F_{M,1}$ equals the distribution function of the largest eigenvalue of a $M \times M\, \GUE$  matrix.
For $p<1$,  we use the notation and the methods  provided in \cite{BO17}.
Define for $r\in \R$ a $\Z_{\geq 0}$-valued random variable $\xi_{r}$ via
\begin{equation*}
\Pb(\xi_{r}\geq  M)=F_{M,p}\left(\frac{\sqrt{2}}{\sqrt{p-q}}r\right), \quad M\in \Z_{\geq1},
\end{equation*}
(note that by definition $\Pb(\xi_{r}\geq 0)=1$ and it follows directly from \eqref{ASEPlimit} that $\Pb(\xi_{r}\geq M)\leq \Pb(\xi_{r}\geq M-1)$).

Define, for $\mathfrak{q} \in (0,1),\zeta\in \mathbb{C}\setminus\{ -\mathfrak{q}^{-j},j\geq0\},$ 
\begin{equation*}
\mathcal{L}^{(\mathfrak{q})}_{\xi_{r}}(\zeta)=\mathbb{E}\left(     \prod_{i\geq 1}\frac{1}{1+\zeta \mathfrak{q}^{\xi_{r}+i}} \right),
\end{equation*} and note that $\mathcal{L}^{(\mathfrak{q})}_{\xi_{r}}$ characterizes the law of $\xi_{r}$.

A  random point process $P$ on $\Z_{\geq 0}$ is a probability measure on the subsets of $\Z_{\geq 0}$.  We define the random variable $\mathcal{X},$ which maps   each subset $X\subseteq \Z_{\geq 0}$ to $\C$ via
\[\mathcal{X}(X)= \prod_{x \in X}\frac{1}{1+\zeta \mathfrak{q}^{x}}.\]
Denoting expectation w.r.t. $P$ as $E,$ we define 
\begin{equation*}
\mathfrak{L}^{(\mathfrak{q})}_{P}(\zeta)=E(\mathcal{X}).
\end{equation*}


Finally, let $\mathrm{DHermite}^{+}(r)$ be (one of the two variants of) the  discrete Hermite ensemble, a determinantal point process on $\Z_{\geq 0}$ introduced in Section 3.2 of \cite{BO17}.
By Proposition 11.1 of \cite{BO17},
we have
\begin{equation*}
\mathcal{L}^{(\mathfrak{q})}_{\xi_{r}}(\zeta)=\mathfrak{L}^{(\mathfrak{q})}_{\mathrm{DHermite}^{+}(r)}(\zeta).
\end{equation*}
Let  $(r_{n})_{n\geq 1}$ be a sequence in $\R$ with $r_n\to +\infty$. We use the notion of asymptotic equivalence, as defined in Definition 11.7 of \cite{BO17}.
Now by Corollary 5.7 of \cite{B17}, the sequence 
$F_{n}(y)=\mathfrak{L}^{(\mathfrak{q})}_{\mathrm{DHermite}^{+}(r_n)}(\mathfrak{q}^{y})$
is asymptotically equivalent to $-\min\mathrm{DHermite}^{+}(r_{n})$.
On the other hand, by Example 5.5 of \cite{B17}, $(\xi_{r_{n}})_{n\geq 1}$ is asymptotically equivalent to $ \mathcal{L}^{(\mathfrak{q})}_{-\xi_{r_{n}}}(\mathfrak{q}^{y})$.
Since $ \mathcal{L}^{(\mathfrak{q})}_{-\xi_{r_{n}}}=\mathfrak{L}^{(\mathfrak{q})}_{-\mathrm{DHermite}^{+}(r)}$ and since being asymptotically equivalent is  a transitive relation,
it follows that $(\min\mathrm{DHermite}^{+}(r_{n}))_{n\geq 1}$ and $(\xi_{r_{n}})_{n\geq 1}$ are asymptotically equivalent. This in particular implies 
\begin{align*}
\lim_{M \to \infty}\Pb(\xi_{\sqrt{2M}+sM^{-1/6}}\geq M)=\lim_{M \to \infty}\Pb(\min\mathrm{DHermite}^{+}(\sqrt{2M}+sM^{-1/6})\geq M).
\end{align*}
Now by the duality of the discrete and continuous Hermite ensemble (Theorem 3.7 in \cite{BO17})
we have 
\begin{align*}
\lim_{M \to \infty}\Pb(\min\mathrm{DHermite}^{+}&(\sqrt{2M}+sM^{-1/6})\geq M)\\&=\lim_{M \to \infty}\Pb(\mathrm{CHermite}(M)\mathrm{\,has\, no \,particles\,in\,}[\sqrt{2M}+sM^{-1/6},+\infty)),
\end{align*}
where $\mathrm{CHermite}(M)$ is the continuous $M-$particle Hermite ensemble, i.e. the determinantal point process on $\R$ with correlation kernel
\begin{equation*}
K_{M}(x,y)=e^{-x^{2}/2}e^{-y^{2}/2}\sum_{n=0}^{M-1}\frac{\hat{H}_{n}(x)\hat{H}_{n}(y)}{||H_{n}||^{2}}
\end{equation*}
w.r.t. the Lebesgue measure, the $(\hat{H}_{n})_{n\geq0}$ being  the Hermite polynomials, which are orthogonal on  $L^{2}(\R,e^{-x^{2}}\dx x)$  and have leading coefficients $2^{n},n\geq 1$. The convergence 
\begin{equation*}
\lim_{M \to \infty}\Pb(\mathrm{CHermite}(M)\mathrm{\,has\, no \,particles\,in\,}[\sqrt{2M}+sM^{-1/6},+\infty))=F_{\GUE}(\sqrt{2}s)
\end{equation*}
is a classical result and is e.g. proved as Theorem 3.14 in great detail in the textbook \cite{AGZ10}, see Chapters 3.2 and 3.7 therein; note that the definition of Hermite polynomials differs slightly in \cite{AGZ10}.
\end{proof}

\section{Bounds on particle positions using stationary measures}\label{mixsec}
In this Section, we provide bounds of the leftmost particle of several countable state space ASEPs.
A prominent role will play what we call reversed step initial data: 
Define for $Z\in \Z$  the ASEP  $ ( \eta^{-\mathrm{step}(Z)}_{\ell})_{\ell\geq 0} $ started from the reversed step initial data
\begin{equation*}
\eta^{-\mathrm{step}(Z)}(j)=\mathbf{1}_{\Z_{\geq Z}}(j),
\end{equation*}
for $Z=0$ we simply write $\eta^{-\mathrm{step}}$.

We start by bounding the position of the leftmost particle of $\eta^{-\mathrm{step}(Z)}_{\ell}$. 
\begin{prop}\label{block}
Consider  ASEP with reversed step  initial data $x_{-n}^{\mathrm{-step(Z)}}(0)=n+Z,n\geq 0$ and let $\delta>0$. Then  there is a $t_0$ such that for $t>t_{0},R\in \Z_{\geq 1}$ and constants $C_{1},C_{2}$ (which depend on $p$) we have
\begin{align}\label{bms2}
&\Pb\left(  x_{0}^{\mathrm{-step}(Z)}(t)<Z-R\right)\leq C_{1}e^{-C_{2}R}
\\&\label{bms}
\Pb\left( \inf_{0\leq \ell \leq t} x_{0}^{\mathrm{-step}(Z)}(\ell)<Z-t^{\delta}\right)\leq C_{1}e^{-C_{2}t^{\delta}}.
\end{align}
\end{prop}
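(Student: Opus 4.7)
The plan is as follows. By translation invariance of the ASEP dynamics, it suffices to treat $Z=0$; set $\sigma(t):=x_{0}^{-\mathrm{step}}(t)$.

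\textbf{Step 1 (leftward jumps are Poisson).} The site immediately to the left of $\sigma(t)$ is always empty by definition of ``leftmost'', so by the graphical construction of Section \ref{BC}, every leftward jump of $\sigma$ corresponds to a firing of the left-clock $P^{\sigma(s^{-}),\sigma(s^{-})-1}$ at rate $q$, and every such firing succeeds. Since the rate is constantly $q$ regardless of the configuration, the total number $L(t)$ of leftward jumps of $\sigma$ on $[0,t]$ is a Poisson random variable of mean $qt$. Writing $\sigma(t)=R(t)-L(t)$ with $R(t)\geq 0$ the number of rightward jumps, one has $\sigma(t)\geq -L(t)$; but this one-sided bound alone is far too weak for $t$ large, and the lower bound must exploit $R(t)$.

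\textbf{Step 2 (reflected-walk coupling).} The core observation is that \emph{immediately} after each leftward jump of $\sigma$, the site to $\sigma$'s right is precisely the site $\sigma$ just vacated, and hence is empty; consequently $\sigma$'s right-clock (rate $p>q$) fires productively. This rebound mechanism should allow one to stochastically dominate the non-negative process $Y(t):=\max\{-\sigma(t),0\}$ by a biased random walk $W(t)$ on $\Z_{\geq 0}$ with upward rate $q$, downward rate $p$, and reflection at $0$, started at $W(0)=0$. Since $p>q$, the walk $W$ is positive recurrent with geometric stationary distribution of parameter $q/p$; since the marginal $\Pb(W(t)\geq R)$ is non-decreasing in $t$ and converges to its stationary value, one obtains
\begin{equation*}
\Pb\bigl(-\sigma(t)\geq R\bigr)\leq \Pb\bigl(W(t)\geq R\bigr)\leq (q/p)^{R}
\end{equation*}
uniformly in $t\geq 0$, which gives \eqref{bms2}.

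\textbf{Step 3 (supremum bound).} To promote the pointwise bound \eqref{bms2} to the infimum bound \eqref{bms}, discretize $[0,t]$ into $\lceil t\rceil$ unit intervals $[k,k+1]$ and apply \eqref{bms2} at each left endpoint with threshold $R=t^{\delta}/2$. The within-interval fluctuation of $\sigma$ is controlled by the total number of jumps of $\sigma$ on that interval, which has exponential tails by a Poisson estimate. A union bound then gives a bound of order $t\,C_{1}(q/p)^{t^{\delta}/2}$, absorbed into $C_{1}'e^{-C_{2}'t^{\delta}}$ by adjusting constants.

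\textbf{Main obstacle.} The chief difficulty is making the coupling in Step 2 rigorous: the productive firing rate of $\sigma$'s right-clock depends on whether $\eta_{t}(\sigma(t)+1)=0$, which is a functional of the entire ASEP configuration, not of $Y$ alone. One would track the enlarged chain $(\sigma(t),\eta_{t}(\sigma(t)+1))$ and use the attractivity of the basic coupling, together with the monotone structure of the reversed step initial data (every site to the right of $\sigma$ was initially occupied, so particles tend to fill in from the right), to verify that the downward rate of $Y$ is at least $p$ whenever $Y>0$, which is what the reflected walk $W$ requires.
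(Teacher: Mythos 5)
Your Steps 1 and 3 are sound (Step 3 is essentially the paper's own passage from \eqref{bms2} to \eqref{bms}: a union bound over the integer times in $[0,t]$ plus a Poisson estimate on the number of left jumps within a unit interval). The genuine gap is Step 2, and it is exactly the point you flag as the ``main obstacle'': the claimed stochastic domination of $Y(t)=\max\{-\sigma(t),0\}$ by a reflected walk with downward rate $p$ is false. The downward rate of $Y$ equals $p\cdot\mathbf{1}\{\eta_{t}(\sigma(t)+1)=0\}$, and this indicator can vanish while $Y>0$: after $\sigma$ jumps left from $0$ to $-1$, the particle at site $1$ can jump left into site $0$, leaving $\sigma=-1$ blocked on its right, so the downward rate of $Y$ drops to $0$. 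Your rebound observation is only instantaneous --- the vacated site is typically refilled from the right before $\sigma$'s right clock rings, and in equilibrium the leftmost particle is adjacent to an occupied site with probability bounded away from zero, so no monotone coupling with a rate-$(q,p)$ reflected walk exists. Tracking the enlarged pair $(\sigma(t),\eta_{t}(\sigma(t)+1))$ does not repair this, because that pair is not Markov and its transition rates still depend on the rest of the configuration; attractivity alone gives no lower bound on the rightward rate.

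The paper circumvents the problem with a global rather than local comparison: it couples $\eta^{-\mathrm{step}}$ via the basic coupling with an ASEP started from the invariant blocking product measure $\mu$ with marginals $\mu(\{\eta:\eta(i)=1\})=c(p/q)^{i}/(1+c(p/q)^{i})$. By attractivity, the probability that $\eta_{\ell}^{\mathrm{block}}\geq\eta_{\ell}^{-\mathrm{step}}$ fails is at most its value at time $0$, which is $O(1/c)$, while invariance of $\mu$ reduces the tail of the leftmost blocking particle at time $\ell$ to its time-$0$ tail, which is $O(c\,(q/p)^{R})$; choosing $c=(p/q)^{R/4}$ balances the two contributions and yields \eqref{bms2} uniformly in $\ell$. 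Any repair of your argument would in effect have to establish domination by (or convergence to) this stationary blocking measure, which is precisely what the paper's comparison accomplishes in one step; so you should replace Step 2 by that comparison rather than try to make the reflected-walk coupling rigorous.
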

\begin{proof}
By translation invariance, we may w.l.o.g. set $Z=0$.
We prove the proposition by comparing the reversed step initial data $\eta^{-\mathrm{step}}=\mathbf{1}_{\Z_{\geq0}}$ with an invariant blocking measure $\mu$.
The measure $\mu$ on $\{0,1\}^{\Z}$ is the product measure with marginals
\begin{equation}\label{blockm}
\mu(\{\eta:\eta(i)=1\})=\frac{c(p/q)^{i}}{1+c(p/q)^{i}}
\end{equation}
with  $c>0$  a free parameter we choose later. It is well known that $\mu$ is invariant for ASEP \cite{Lig76}.

Let $(\eta^{\mathrm{block}}_{s})_{s\geq 0}$  be the ASEP started from the initial distribution $\mu$, and denote by $x_{0}^{\mathrm{block}}(s)$ the position of the leftmost particle of $\eta^{\mathrm{block}}_{s}$.
Let $(\eta^{\mathrm{-step}}_{s})_{s\geq 0}$  be the ASEP started from the reversed step initial data  $\eta^{-\mathrm{step}}.$ We let $ (\eta^{\mathrm{block}}_{s})_{s\geq 0},(\eta^{\mathrm{-step}}_{s})_{s\geq 0}$ evolve together under the basic coupling.

Let us  first prove that for any fixed  $0\leq \ell \leq t$ 
\begin{equation}\label{equation}
\Pb\left(  x_{0}^{\mathrm{-step}}(\ell)<-R\right)\leq  \frac{1}{c(1-q/p)}+c\left(\frac{q}{p}\right)^{R}\frac{1}{1-q/p}.
\end{equation}
To prove \eqref{equation}, consider the partial order on $\{0,1\}^{\Z}$ given by 
\begin{equation*}
\eta \leq \eta^{\prime}\iff \eta(i)\leq\eta^{\prime}(i) \,\mathrm{\,for\,\,all\,\,}i\in \Z
\end{equation*}
and use $\eta \not \leq \eta^{\prime}$ as short hand for the statement  that $\eta \leq \eta^{\prime}$ does not hold.
We can now bound
\begin{equation}
\begin{aligned}\label{un}
\Pb\left( x_{0}^{\mathrm{-step}}(\ell)<-R\right)&\leq \Pb\left( \{ x_{0}^{\mathrm{-step}}(\ell)<-R\}\cap \{\eta_{\ell}^{\mathrm{block}}\geq \eta_{\ell}^{\mathrm{-step}}\}\right)
\\&+\Pb(\eta_{\ell}^{\mathrm{block}}\not \geq \eta_{\ell}^{\mathrm{-step}}).
\end{aligned}
\end{equation}
Let us bound the two  terms on the R.H.S. of \eqref{un}.
By attractivity  of ASEP,
\begin{equation}\label{trois}
\Pb(\eta_{\ell}^{\mathrm{block}}\not \geq \eta_{\ell}^{\mathrm{-step}})\leq \Pb(\eta_{0}^{\mathrm{block}}\not \geq \eta^{\mathrm{-step}}_{0}).
\end{equation}
Using the simple estimates $\log(1+\varepsilon)\leq \varepsilon$ and $\exp(-\varepsilon)\geq 1-\varepsilon$ for $\varepsilon\geq 0$
we obtain
\begin{equation}
\begin{aligned}\label{quatre}
\Pb(\eta_{0}^{\mathrm{block}}\not \geq \eta^{\mathrm{-step}}_{0})&=1-\exp\left(-\sum_{i=0}^{\infty}\log(1+(q/p)^{i}/c)\right)
\\&\leq1-\exp\left(- 1/(c(1-q/p))\right)
\\&\leq \frac{1}{c(1-q/p)}.
\end{aligned}
\end{equation}
Furthermore, we have\begin{equation}
\begin{aligned}\label{deux}
\Pb\left( \{ x_{0}^{\mathrm{-step}}(\ell)<-R\}\cap \{\eta_{\ell}^{\mathrm{block}}\geq \eta_{\ell}^{\mathrm{-step}}\}\right)&\leq \Pb\left(  x_{0}^{\mathrm{block}}(\ell)<-R\right)
\\&=\Pb\left(  x_{0}^{\mathrm{block}}(0)<-R\right),
\end{aligned}\end{equation}
where the identity in \eqref{deux} follows from the invariance of $\mu$.
By a computation very similar to \eqref{quatre} we obtain
\begin{equation}\label{cinq}
\begin{aligned}
\Pb\left(  x_{0}^{\mathrm{block}}(0)<-R\right)\leq c\left(\frac{q}{p}\right)^{R}\frac{1}{1-q/p}.
\end{aligned}
\end{equation}

This proves \eqref{equation} by combining the inequalities \eqref{un},\eqref{deux},\eqref{trois},\eqref{quatre} and \eqref{cinq}.
If we choose $c=(p/q)^{R/4}$ in \eqref{equation}, we obtain \eqref{bms2}.

Since \eqref{equation} does not depend on $\ell$, we obtain for $R=t^{\delta}/2$
\begin{equation*}\label{equation2}
\Pb\left( \bigcup_{\ell=1,2,\ldots,\lfloor t\rfloor } \{x_{0}^{\mathrm{-step}}(\ell)<-t^{\delta}/2\}\right)\leq t\left(\frac{1}{c(1-q/p)}+c\left(\frac{q}{p}\right)^{t^{\delta}/2}\frac{1}{1-q/p}\right).
\end{equation*}
Note further that for the event 
\begin{equation}\label{event}
\bigcap_{\ell=1,2,\ldots,\lfloor t \rfloor } \{x_{0}^{\mathrm{-step}}(\ell)\geq -t^{\delta}/2\}\cap\{ \inf_{0\leq \ell \leq t} x_{0}^{\mathrm{-step}}(\ell)<-t^{\delta}\}
\end{equation}
to hold, $x_{0}^{\mathrm{-step}}$ would need to make $t^{\delta}/2$ jumps to the left in a time interval $[\ell,\ell+1],\ell=0,\ldots,t-1$. 
For any fixed time interval $[\ell,\ell+1]$ the probability that  $x_{0}^{\mathrm{-step}}$ makes at least  $k$ jumps to the left is bounded by the probability that a rate $q$ Poisson process makes at least $k$ jumps in a unit time interval.
In particular, the probability that  $x_{0}^{\mathrm{-step}}$ makes $t^{\delta}/2$ jumps to the left during $[\ell,\ell+1]$ may be  bounded by $e^{-t^{\delta}/2}$ for $t\geq t_{0},$ and $t_{0}$ sufficiently large. Since there are $t$ such intervals, we see that the probability of the event \eqref{event} is bounded by $te^{-t^{\delta}/2}$.
So in total we obtain 
\begin{align}\nonumber
\Pb\left( \inf_{0\leq \ell \leq t} x_{0}^{\mathrm{-step}}(\ell)<-t^{\delta}\right)&\leq    \Pb\left( \bigcup_{\ell=1,2,\ldots,\lfloor t\rfloor} \{x_{0}^{\mathrm{-step}}(\ell)<-t^{\delta}/2\}\right)
\\&\nonumber+\Pb\left(\bigcap_{\ell=1,2,\ldots,\lfloor t\rfloor} \{x_{0}^{\mathrm{-step}}(\ell)\geq -t^{\delta}/2\}\cap\{ \inf_{0\leq \ell \leq t} x_{0}^{\mathrm{-step}}(\ell)<-t^{\delta}\}\right)
\\&\label{ja!}\leq   t\left( \frac{1}{c(1-q/p)}+c\left(\frac{q}{p}\right)^{t^{\delta}/2}\frac{1}{1-q/p}+e^{-t^{\delta}/2}\right).
\end{align}
Choosing $c=(p/q)^{t^{\delta}/4}$ in \eqref{ja!} we obtain \eqref{bms} for $t $ sufficiently large.
\end{proof}

The particle configuration  $\eta^{-\mathrm{step}(Z)}$  lies in the countable set
\begin{equation*}\label{OZ}
\Omega_{Z}=\big\{\eta\in\{0,1\}^{\Z}:\sum_{j=-\infty}^{Z-1}\eta(j)=\sum_{j=Z}^{\infty}1-\eta(j)<\infty\big\},
\end{equation*}
and an ASEP started from $\Omega_{Z}$ remains in $\Omega_{Z}$ for all times.
Furthermore, an  ASEP started from an element of $\Omega_{Z}$ has as unique invariant measure
\begin{equation*}
\mu_{Z}=\mu(\cdot | \Omega_{Z}),
\end{equation*}
with $\mu$ the blocking measure \eqref{blockm} ($\mu$ depends on the parameter $c$, but $\mu_{Z}$ does not).
On $\Omega_{Z}$ we define the partial order 
\begin{equation}\label{preceq}
\eta\preceq\eta^{\prime}\iff \sum_{j=r}^{\infty}1-\eta^{\prime}(j)\leq \sum_{j=r}^{\infty}1-\eta(j)\quad \mathrm{for \, all\,}r\in \Z.
\end{equation}
While this order is only partial, all $\eta\in \Omega_{Z} $ satisfy
\begin{equation*}
\eta\preceq\eta^{-\mathrm{step}(Z)}=\mathbf{1}_{\Z_{\geq Z}}.
\end{equation*}
The following Lemma will be used repeatedly to bound the position of the  leftmost particle of ASEPs in $\Omega_{Z}.$
\begin{lem}\label{lem}
Let   $\eta,\eta^{\prime}\in \Omega_{Z}$ and consider the basic coupling of two ASEPs $(\eta_{\ell})_{\ell\geq 0},(\eta^{\prime}_{\ell})_{\ell\geq 0}$ started from  $\eta_{0}=\eta,\eta^{\prime}_{0}=\eta^{\prime}$.   For $s\geq 0,$ denote   by $x_{0}(s),x_{0}^{\prime}(s)$ the position of the leftmost particle of $\eta_{s},\eta^{\prime}_{s}.$ 
Then, if $\eta\preceq\eta^{\prime}$, we have $x_{0}(s)\leq x_{0}^{\prime}(s)$.
\end{lem}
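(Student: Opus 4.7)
The plan is to decompose the claim into two independent steps: (i) show that the partial order $\preceq$ is preserved by the basic coupling, i.e.\ $\eta_0\preceq\eta_0'$ implies $\eta_s\preceq\eta_s'$ for every $s\geq 0$; and (ii) prove the static comparison $\eta\preceq\eta'\Rightarrow x_0(\eta)\leq x_0(\eta')$ on $\Omega_Z$. Combining (i) with (ii) applied at each time $s$ gives the lemma. Throughout it is convenient to write $H_\zeta(r):=\sum_{j\geq r}(1-\zeta(j))$, which is finite for every $\zeta\in\Omega_Z$, so that $\zeta\preceq\zeta'$ reads $H_{\zeta'}(r)\leq H_\zeta(r)$ for all $r\in\Z$. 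The main obstacle is step (ii): $\preceq$ is a right-tail condition on holes, whereas $x_0$ is a left-tail quantity; the bridge will be the $\Omega_Z$-balance, which pins $H_\zeta(r)$ to $Z-r$ on the empty region to the left of $x_0(\zeta)$.

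\textbf{Step (i).} This proceeds in close analogy with item~(ii) of the proof of Proposition~\ref{FKG}. Set $T:=\inf\{\ell\geq 0:\eta_\ell\not\preceq\eta'_\ell\}$ and argue by contradiction from $T<\infty$. At time $T$ a single Poisson clock fires, and the resulting jump alters $H_\eta(r)$ (resp.\ $H_{\eta'}(r)$) for exactly one value $r_0$. If the jump executes in both processes, both hole counts change identically and $\preceq$ is preserved, so it executes in exactly one process, say $\eta$; the failure of $\preceq$ just after $T$ then forces $H_{\eta_{T^-}}(r_0)=H_{\eta'_{T^-}}(r_0)$. A short case split on the direction of the jump and on why it is blocked in $\eta'$ (empty source vs.\ occupied destination) produces some $r\in\{r_0-1,r_0+1\}$ at which $H_{\eta'_{T^-}}(r)>H_{\eta_{T^-}}(r)$, contradicting that $\preceq$ held at $T^-$. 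For instance, if a right jump $r_0-1\to r_0$ occurs in $\eta$ but is blocked by $\eta'_{T^-}(r_0-1)=0$, one reads off $H_{\eta'_{T^-}}(r_0-1)=H_{\eta'_{T^-}}(r_0)+1=H_{\eta_{T^-}}(r_0)+1>H_{\eta_{T^-}}(r_0-1)$, using that $\eta_{T^-}(r_0-1)=1$. The remaining subcases are symmetric.

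\textbf{Step (ii).} Let $a:=x_0(\eta)$, $b:=x_0(\eta')$, and suppose toward contradiction that $a>b$. If $a\geq Z$, the $\Omega_Z$-balance $\sum_{j<Z}\eta(j)=\sum_{j\geq Z}(1-\eta(j))=0$ forces $\eta=\mathbf{1}_{\Z_{\geq Z}}$ and $H_\eta(Z)=0$; then $\eta\preceq\eta'$ at $r=Z$ gives $H_{\eta'}(Z)=0$, whence $\eta'=\mathbf{1}_{\Z_{\geq Z}}=\eta$ and $b=Z=a$, a contradiction. So $a<Z$. Since $\eta(j)=0$ for all $j<a$, all $N_\eta:=\sum_{j<Z}\eta(j)$ particles of $\eta$ in $(-\infty,Z)$ lie in $[a,Z)$, and hence for every $r\leq a$ the number of particles of $\eta$ in $[r,Z)$ equals $N_\eta$, yielding $H_\eta(r)=(Z-r)-N_\eta+N_\eta=Z-r$. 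Applied at $r=b+1\leq a$ this gives $H_\eta(b+1)=Z-b-1$. The same identity for $\eta'$ at $r=b$ gives $H_{\eta'}(b)=Z-b$, and since $\eta'(b)=1$ we have $H_{\eta'}(b+1)=H_{\eta'}(b)=Z-b$. Thus $H_{\eta'}(b+1)=Z-b>Z-b-1=H_\eta(b+1)$, contradicting $\eta\preceq\eta'$ at $r=b+1$. Hence $a\leq b$, and together with step (i) this concludes the proof.
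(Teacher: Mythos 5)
Your proof is correct and follows essentially the same route as the paper: reduce to $s=0$ via preservation of $\preceq$ under the basic coupling, then derive a contradiction from $x_0(\eta)>x_0(\eta')$ by a hole/particle count that exploits the $\Omega_Z$ balance (the paper counts particles in a window $\{x_0(0),\dots,R_0\}$ with $R_0$ beyond the last hole, you compute $H_\eta(r)=Z-r$ to the left of the leftmost particle — the same bookkeeping in different coordinates). The only substantive addition is that you spell out the order-preservation step, which the paper asserts without proof by analogy with its Proposition on the Harris inequality.
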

\begin{proof}
If $\eta\preceq\eta^{\prime}$, then  $\eta_s\preceq\eta^{\prime}_s,$ hence it suffices to prove the lemma for $s=0$. As  $\eta,\eta^{\prime}\in \Omega_{Z},$ there is an $R_{0} \in \Z$ such that 
\begin{equation*}
\sum_{j=R_{0}}^{\infty}1-\eta^{\prime}(j)= \sum_{j=R_{0}}^{\infty}1-\eta(j)=0.
\end{equation*}
Note $x_{0}(0), x_{0}^{\prime}(0)\leq R_{0}. $
Now imagine $x_{0}(0)>x_{0}^{\prime}(0)$.  The set $\{x_{0}(0),\ldots,R_{0}\}$ contains $R_{0}-Z+1$ particles from $\eta$. On the other hand, the set $\{x_{0}(0),\ldots,R_{0}\}$ contains less than $R_{0}-Z+1$
particles from $\eta^{\prime}$ because otherwise $x_{0}^{\prime}(0)\in  \{x_{0}(0),\ldots,R_{0}\}, $ contradicting $x_{0}(0)>x_{0}^{\prime}(0)$.
But this implies
\begin{align*}
\sum_{j=x_{0}(0)}^{\infty}1-\eta^{\prime}(j)&=\sum_{j=x_{0}(0)}^{R_{0}-1}1-\eta^{\prime}(j)+\sum_{j=R_{0}}^{\infty}1-\eta^{\prime}(j)\\&> \sum_{j=x_{0}(0)}^{R_{0}-1}1-\eta(j)+\sum_{j=R_{0}}^{\infty}1-\eta(j)=\sum_{j=x_{0}(0)}^{\infty}1-\eta(j),
\end{align*}
contradicting $\eta\preceq\eta^{\prime}$.
\end{proof}

The next result  will be very important to prove a lower bound for \eqref{Jamie} in Section \ref{lowsec}. We use bounds from \cite{BBHM} on the time it takes an ASEP started from $\Omega_{Z}$  to hit the maximal state  $\eta^{-\mathrm{step}(Z)}$.
As shown in \cite{BBHM}, these bounds  on hitting times imply that the mixing time of biased card shuffling on $N$ cards   is  $\mathcal{O}(N).$ In our context, we will use these bounds to   control the position of the leftmost particle in an ASEP started from a specific initial data.
\begin{prop}\label{DOIT}
Let $a,b,N\in\Z$ and $a\leq b\leq N.$
Consider the ASEP $(\eta^{a,b,N}_{\ell})_{\ell\geq 0}$ with initial data 
\begin{equation}\label{abN}
\Omega_{N-b+a}\ni\eta^{a,b,N}_{0}=\mathbf{1}_{\{a,\ldots,b\}}+\mathbf{1}_{\Z_{\geq N+1}}
\end{equation}
and denote by $x_{0}^{a,b,N}(s)$ the position of the leftmost particle of  $\eta^{a,b,N}_{s}$.
Let $\mathcal{M}=\max\{b-a+1,N-b\}$ and $\varepsilon>0$. Then there are constants $C_{1},C_{2}$ (depending on $p$) and a constant $K$ (depending on $p,\varepsilon$)
so that for $s>K\mathcal{M}$ and $R\in \Z_{\geq 1}$ 
\begin{equation*}
\Pb\left(x_{0}^{a,b,N}(s)<N-b+a-R\right)\leq \frac{\varepsilon}{\mathcal{M}}+ C_{1}e^{-C_{2}R}.
\end{equation*}
\end{prop}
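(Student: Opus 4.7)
Write $Z = N - b + a$, so that $\eta^{a,b,N}_0 \in \Omega_Z$ and the maximal element of $\Omega_Z$ with respect to the partial order $\preceq$ of \eqref{preceq} is the reversed-step state $\eta^{-\mathrm{step}(Z)} = \mathbf{1}_{\Z_{\geq Z}}$. My idea is to reduce the proposition to Proposition \ref{block} via a hitting-time argument: once the process has reached this maximum, by the strong Markov property its subsequent trajectory is in law an ASEP with reversed-step initial data at $Z$, which is exactly the regime in which Proposition \ref{block} controls the leftmost particle.

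Set $T := \inf\{\ell \geq 0 : \eta^{a,b,N}_\ell = \eta^{-\mathrm{step}(Z)}\}$. The crucial probabilistic input is a hitting-time bound to the top state, cited from \cite{BBHM} (and refined in \cite{LL19}). The natural scale is $\mathcal{M} = \max\{b-a+1,\, N-b\}$: the finite block of $b-a+1$ particles must traverse a gap of $N-b$ empty sites to merge with the reservoir $\Z_{\geq N+1}$, and the biased drift $p-q>0$ accomplishes this in time of order $\mathcal{M}$. Quantitatively, one arranges that for $K = K(p,\varepsilon)$ sufficiently large,
\[
\Pb(T > K\mathcal{M}/2) \leq \varepsilon/\mathcal{M}.
\]

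On the complementary event $\{T \leq K\mathcal{M}/2\}$, which sits inside $\{T \leq s/2\}$ since $s > K\mathcal{M}$, the strong Markov property at $T$ identifies $(\eta^{a,b,N}_{T+\ell})_{\ell\geq 0}$ in law with the reversed-step ASEP $(\eta^{-\mathrm{step}(Z)}_\ell)_{\ell\geq 0}$. Conditioning on $T$,
\begin{align*}
&\Pb(x_0^{a,b,N}(s) < Z - R,\ T \leq K\mathcal{M}/2) \\
&\quad = \E\bigl[\mathbf{1}_{\{T \leq K\mathcal{M}/2\}}\, \Pb(x_0^{-\mathrm{step}(Z)}(s-T) < Z - R \mid T)\bigr].
\end{align*}
Since $s - T \geq K\mathcal{M}/2 \geq t_0$ once $K$ is chosen large enough (using $\mathcal{M} \geq 1$), Proposition \ref{block} bounds the inner probability by $C_1 e^{-C_2 R}$. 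Combining with the hitting-time estimate yields the claim $\varepsilon/\mathcal{M} + C_1 e^{-C_2 R}$.

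The main obstacle is extracting from \cite{BBHM} (or \cite{LL19}) a hitting-time bound of the form $\Pb(T > K\mathcal{M}) \leq \varepsilon/\mathcal{M}$ with $K$ independent of $\mathcal{M}$: a bare expectation bound $\E T = O(\mathcal{M})$ together with Markov's inequality only delivers $O(1/K)$, so one needs quantitative concentration, either variance bounds combined with Chebyshev, or the sharper cutoff-type estimates, to obtain the $1/\mathcal{M}$ decay while keeping $K$ a function of $\varepsilon$ and $p$ alone. A secondary technical point is to justify that the countable-state-space dynamics considered here may be reduced, at the level of the hitting time of the top, to the finite-window ASEP on a region of size $O(\mathcal{M})$ treated in \cite{BBHM}; this follows from the graphical construction of Section \ref{BC} together with the fact that, until $T$, the only relevant Poisson clocks are those inside the smallest interval that contains all the non-frozen sites.
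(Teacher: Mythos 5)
Your architecture --- reduce to the reversed-step process via the hitting time of the maximal state of $\Omega_{Z}$, $Z=N-b+a$, and then invoke Proposition \ref{block} --- is exactly the paper's. But as written your argument has one genuine gap, which is the point you yourself flag as ``the main obstacle''. The bound $\Pb(T>K\mathcal{M})\leq \varepsilon/\mathcal{M}$ does not need to be manufactured from expectation or variance estimates: Theorem 1.9 of \cite{BBHM} states precisely a bound of this form, $\Pb(\mathfrak{H}\geq Kn)\leq \varepsilon/n$ with $K=K(p,\varepsilon)$. The real issue is that this theorem is stated for the particular \emph{balanced} configuration $I_{n}$ of \cite{BBHM} (as many particles to the left of the interface as holes to the right of it), and $\eta^{a,b,N}_{0}$ is in general not of that form, so you cannot apply the citation to $T$ directly. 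The paper closes this by introducing, in the case $N-b\leq b-a+1$, the shifted configuration $I^{b-a+1}_{0}=\mathbf{1}_{\{a-x,\ldots,b-x\}}+\mathbf{1}_{\Z_{\geq N+1}}$ with $x=2b-a+1-N$, which is a translate of $I_{b-a+1}$ and satisfies $I^{b-a+1}_{0}\preceq \eta^{a,b,N}_{0}$ in the order \eqref{preceq}; by Lemma \ref{lem} the leftmost particle of $I^{b-a+1}_{s}$ is a lower bound for $x_{0}^{a,b,N}(s)$, so it suffices to control the hitting time of $I^{b-a+1}$, to which Theorem 1.9 applies verbatim (the case $N-b\geq b-a+1$ uses the analogous $\tilde{I}^{N-b}$). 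Your closing remark about restricting to a finite window does not address this mismatch of initial configurations; some comparison of this kind is needed before \cite{BBHM} can be quoted.

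A secondary, avoidable complication is your use of the strong Markov property at the random time $T$ followed by an application of Proposition \ref{block} over the random horizon $s-T$. This works, but the paper sidesteps it: couple $I^{b-a+1}$ with $\eta^{-\mathrm{step}(Z)}$ from time $0$ via the basic coupling; since $I^{b-a+1}_{\ell}\preceq\eta^{-\mathrm{step}(Z)}_{\ell}$ for all $\ell$ and $\eta^{-\mathrm{step}(Z)}$ is maximal in $(\Omega_{Z},\preceq)$, the two processes coincide for all times after the hitting time, so on $\{\mathfrak{H}(I^{b-a+1})\leq K\mathcal{M}\}\subseteq\{\mathfrak{H}(I^{b-a+1})\leq s\}$ one applies Proposition \ref{block} at the deterministic time $s$ directly. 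Either route is fine; the deterministic-time version spares you the conditioning on $T$ and the check that $s-T$ exceeds the threshold $t_{0}$.
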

\begin{proof}
Consider first the case $N-b\leq b-a+1$. Let $x\geq 0$ be such that $N-b+x=b-a+1$ (i.e. $x=2b-a+1-N$). Consider an ASEP $(I^{b-a+1}_{\ell})_{\ell\geq 0}$ started from 
\begin{equation*}
I^{b-a+1}_{0}=\mathbf{1}_{\{a-x,\ldots,b-x\}}+\mathbf{1}_{\Z_{\geq N+1}}.
\end{equation*}
Clearly $I^{b-a+1}_{0}\preceq \eta^{a,b,N}_{0}$.
Let us denote the position of the leftmost particle of $I^{b-a+1}_{\ell}$ by $\mathcal{I}_{0}(\ell)$.
Then with  $I_{b-a+1}$ defined in (4) of \cite{BBHM}  we have 
\begin{equation}\label{mossel}
1-I^{b-a+1}_{0}(j)=I_{b-a+1}(j+x-b-1), \quad j\in \Z.
\end{equation}
Consider the hitting time
\begin{equation*}
\mathfrak{H}(I^{b-a+1})=\inf\{\ell:  I^{b-a+1}_{\ell}=\eta^{-\mathrm{step}(N-b+a)}\}.
\end{equation*}
By \eqref{mossel}, Theorem 1.9 of \cite{BBHM} directly gives that for every $\varepsilon>0$ there is a constant $K$  such that 
\begin{equation*}
\Pb(\mathfrak{H}(I^{b-a+1})\geq K(b-a+1))\leq \frac{\varepsilon}{b-a+1}.
\end{equation*}
Hence we may conclude for $s>K(b-a+1)$ 
\begin{align}\label{firsteq}
\Pb\left(x_{0}^{a,b,N}(s)<N-b+a-R\right)&\leq \Pb\left(\mathcal{I}_{0}(s)<N-b+a-R, \mathfrak{H}(I^{b-a+1})\leq K(b-a+1)\right)
\\&+\Pb\left( \mathfrak{H}(I^{b-a+1})>K(b-a+1)\right)\nonumber
\\&\leq \Pb\left(x_{0}^{-\mathrm{step}(N-b+a)}(s)<N-b+a-R\right)+\frac{\varepsilon}{b-a+1}\label{2ndeq}
\\& \leq C_{1}e^{-C_{2}R} +\frac{\varepsilon}{b-a+1},\label{lasteq}
\end{align}
where \eqref{firsteq} follows from Lemma \ref{lem}, \eqref{lasteq} follows from   Proposition \ref{block}, and  for  \eqref{2ndeq} we used that, when  $\eta^{-\mathrm{step}(N-b+a)}_{s}, I^{b-a+1}_{s}$ are coupled via the basic coupling, then for $s\geq \mathfrak{H}(I^{b-a+1})$ we have $\eta^{-\mathrm{step}(N-b+a)}_{s}=I^{b-a+1}_{s}$ (this is  so because 
 $I^{b-a+1}_{s} \preceq \eta^{-\mathrm{step}(N-b+a)}_{s} $ for all $s\geq 0,$ hence $\eta^{-\mathrm{step}(N-b+a)}=I^{b-a+1}_{ \mathfrak{H}(I^{b-a+1})}\preceq  \eta^{-\mathrm{step}(N-b+a)}_{\mathfrak{H}(I^{b-a+1})}, $ implying $I^{b-a+1}_{ \mathfrak{H}(I^{b-a+1})}=  \eta^{-\mathrm{step}(N-b+a)}_{\mathfrak{H}(I^{b-a+1})}$ and hence $\eta^{-\mathrm{step}(N-b+a)}_{s}=I^{b-a+1}_{s}$ for all $s\geq \mathfrak{H}(I^{b-a+1})$).
 
 If $N-b\geq b-a+1,$ we proceed similarly : Let $\tilde{x}\geq0$ so that $b-a+1+\tilde{x}=N-b$ and consider
 \begin{equation*}
 \tilde{I}^{N-b}_{0}=\mathbf{1}_{\{a,\ldots,b+\tilde{x}\}}+\mathbf{1}_{\Z_{>N+\tilde{x}}}.
 \end{equation*}
 Then 
 \begin{equation*}
1-\tilde{I}^{N-b}_{0}(j)=I_{N-b}(j-N+b-a), \quad j\in \Z
\end{equation*}
and we have \begin{equation*}
\Pb(\mathfrak{H}(\tilde{I}^{N-b}_{0})\leq K(N-b))>1-\frac{\varepsilon}{N-b}.
\end{equation*}
The remaining part of the proof is identical.
\end{proof}

\section{Slow decorrelation and asymptotic independence}\label{decsec}

Let us recall the two  initial data that were defined in Section \ref{method} for $C\in\R$:
\begin{equation}
\begin{aligned}\label{AB}
&x_{n}^{A}(0)=
-n-\lfloor (p-q)(t-Ct^{1/2})\rfloor  \quad &&\mathrm{for} \quad   n \geq 1 
\\& x_{n}^{B}(0)=
-n  \quad  &&\mathrm{for} \quad   n \geq  -\lfloor (p-q)(t-Ct^{1/2})\rfloor.
\end{aligned}
\end{equation}
In this section, we consider  fixed  $M\in \Z_{\geq 1},\lambda \in \R$ so that 
$M+\lambda M^{1/3}\geq 1$ 
and   employ the slow decorrelation methodology to prepare the proof of the decoupling of $x_{M+\lambda M^{1/3}}^{A}(t)$ and $x_{M+\lambda M^{1/3}}^{B}(t)$ given  in Section \ref{upsec}.

We start by recalling  the following elementary Lemma. We denote by $"\Rightarrow"$ convergence in distribution.
\begin{lem}\label{elemlem}
Let $(X_{n})_{n\geq 1}, (\tilde{X}_{n})_{n\geq 1}$ be sequences of random variables such that $X_{n}\geq \tilde{X}_{n}$. Let $X_{n } \Rightarrow D, \tilde{X}_{n}\Rightarrow D,$ where $D$ is a probability distribution. Then $X_{n}- \tilde{X}_{n}\Rightarrow0$.
\end{lem}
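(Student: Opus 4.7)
The plan is to reduce the claim to an in-probability estimate and then exploit the monotone coupling together with the common weak limit. Since $X_n - \tilde X_n \geq 0$ and the target distribution is the Dirac mass at $0$, weak convergence to $0$ is equivalent to $X_n - \tilde X_n \to 0$ in probability, so it suffices to show $\Pb(X_n - \tilde X_n > \varepsilon) \to 0$ for each fixed $\varepsilon > 0$.

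The key input I would use is that the ordering $X_n \geq \tilde X_n$ forces $\Pb(X_n \leq s) \leq \Pb(\tilde X_n \leq s)$ for every $s$, while both sides tend to $D(s)$ whenever $s$ is a continuity point of $D$. Consequently
\[
\Pb(\tilde X_n \leq s < X_n) = \Pb(\tilde X_n \leq s) - \Pb(X_n \leq s) \xrightarrow[n \to \infty]{} 0
\]
at every continuity point $s$ of $D$; this single observation is the whole engine of the argument.

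Given this, the next step is a partitioning argument. I would fix a large $K$ and choose a grid of continuity points $-K = s_0 < s_1 < \cdots < s_m = K$ of mesh less than $\varepsilon/2$, which is possible because $D$ has at most countably many discontinuities. On the event $\{X_n - \tilde X_n > \varepsilon\} \cap \{X_n \in (s_k, s_{k+1}]\}$ one has $\tilde X_n < s_{k+1} - \varepsilon \leq s_{k-1}$, so the event is contained in $\{\tilde X_n \leq s_{k-1} < X_n\}$ (with the convention $s_{-1} := -K$ when $k=0$). Summing the estimate from the previous step over the finitely many values of $k$ then yields $\Pb(X_n - \tilde X_n > \varepsilon,\, X_n \in (-K, K]) \to 0$.

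Finally, the tail outside $(-K, K]$ is absorbed via $\limsup_n \Pb(X_n \notin (-K, K]) = D(-K) + 1 - D(K)$, which tends to zero as $K \to \infty$ through continuity points of $D$. I do not anticipate any substantive obstacle: the argument is entirely elementary, and the only care needed is to restrict the grid and the truncation level $K$ to continuity points of $D$, which is standard.
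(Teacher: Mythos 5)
Your proof is correct. Note that the paper itself supplies no proof of this lemma---it is merely ``recalled'' as elementary---so there is nothing to compare against; your argument (reduce weak convergence to a constant to convergence in probability, use the pointwise ordering to get $\Pb(\tilde X_n\le s<X_n)=\Pb(\tilde X_n\le s)-\Pb(X_n\le s)\to 0$ at continuity points $s$ of $D$, then partition $(-K,K]$ by a fine grid of continuity points and absorb the tails using that $D$ is a proper distribution) is the standard one, and every step checks out.
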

The following is our slow decorrelation statement.
\begin{prop}\label{prop0}
Let $\kappa \in (0,1)$ and $\varepsilon>0,\lambda \in \R$.  We have 
\begin{equation*}
\lim_{t\to \infty}\Pb\left(\left|x_{M+\lambda M^{1/3}}^{A}(t)-x_{M+\lambda M^{1/3}}^{A}(t-t^{\kappa})-(p-q)t^{\kappa}\right|\geq \varepsilon t^{1/2}\right)=0.
\end{equation*} 
\end{prop}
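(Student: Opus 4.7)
The strategy is the slow decorrelation argument of \cite{Fer08, CFP10b}: identify two scaled random variables that both converge in distribution to the same one-point limit (supplied by Theorem~\ref{convthm}), produce a probabilistic ordering between them via a suitable coupling, and conclude through the elementary Lemma~\ref{elemlem}. Throughout, set $N := M + \lambda M^{1/3}$, which for $t$ large is a fixed positive integer.

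First, under the basic coupling of Section~\ref{BC}, the dynamics with initial data \eqref{AB} is a rigid spatial translation of step initial data: writing $\eta^{\mathrm{step}}$ for the step ASEP and $N(t) := \lfloor (p-q)(t - Ct^{1/2})\rfloor$, one has $x_n^A(s) = x_n^{\mathrm{step}}(s) - N(t)$ pathwise for every $s \geq 0$. In particular $x_N^A(t) - x_N^A(t-t^\kappa) = x_N^{\mathrm{step}}(t) - x_N^{\mathrm{step}}(t-t^\kappa)$, so the problem reduces to showing $x_N^{\mathrm{step}}(t) - x_N^{\mathrm{step}}(t-t^\kappa) - (p-q) t^\kappa = o_\Pb(t^{1/2})$. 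Introducing
$$U_t := \frac{(p-q) t - x_N^{\mathrm{step}}(t)}{(p-q) t^{1/2}}, \qquad \widetilde V_t := \frac{(p-q)(t - t^\kappa) - x_N^{\mathrm{step}}(t - t^\kappa)}{(p-q) t^{1/2}},$$
Theorem~\ref{convthm} yields $U_t \Rightarrow F_{N,p}$; and since $(t-t^\kappa)^{1/2}/t^{1/2} \to 1$ (using $\kappa < 1$) and $F_{N,p}$ is continuous, also $\widetilde V_t \Rightarrow F_{N,p}$. The rescaled version of the assertion is exactly $\widetilde V_t - U_t \Rightarrow 0$, which by Lemma~\ref{elemlem} will follow once a suitable probabilistic ordering between $U_t$ and $\widetilde V_t$ is in hand.

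To produce the ordering I would couple $\eta^{\mathrm{step}}$ to a pair of auxiliary step-initial-data ASEPs $(\widehat\eta_s)_{s \geq t - t^\kappa}$ and $(\check\eta_s)_{s \geq t - t^\kappa}$ via the basic coupling, chosen so that at the intermediate time $t - t^\kappa$ one configuration pointwise dominates $\eta^{\mathrm{step}}_{t-t^\kappa}$ and the other is pointwise dominated. Attractivity of ASEP then yields $\check x_N(t) \leq x_N^{\mathrm{step}}(t) \leq \widehat x_N(t)$, and Theorem~\ref{convthm} applied to the auxiliaries on the short time window of length $t^\kappa$ locates $\widehat x_N(t)$ and $\check x_N(t)$ within $O_\Pb(t^{\kappa/2}) = o_\Pb(t^{1/2})$ of $x_N^{\mathrm{step}}(t-t^\kappa) + (p-q) t^\kappa$---provided the auxiliaries are engineered so that their $N$-th particle starts at time $t-t^\kappa$ only an $O(1)$ distance from $x_N^{\mathrm{step}}(t - t^\kappa)$.

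The main obstacle is precisely this engineering: in the genuinely asymmetric regime particles move in both directions, so no naïve time-monotonicity of $x_N^{\mathrm{step}}$ is available (as it would be in TASEP), and the auxiliary configurations must simultaneously (i) be comparable with $\eta^{\mathrm{step}}_{t - t^\kappa}$ in the basic-coupling partial order (for attractivity) and (ii) have their $N$-th particle within $O(1)$ of $x_N^{\mathrm{step}}(t - t^\kappa)$ (so the sandwich is tight on the diffusive $t^{1/2}$ scale). The obvious attractive sandwich---taking $\widehat\eta_{t-t^\kappa}$ to be a full step configuration with rightmost particle at $x_1^{\mathrm{step}}(t-t^\kappa)$---satisfies (i) but fails (ii), because $x_1^{\mathrm{step}}(t-t^\kappa) - x_N^{\mathrm{step}}(t-t^\kappa)$ has its own $t^{1/2}$-scale Tracy--Widom fluctuation that would leak into the sandwich gap. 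The auxiliaries must therefore be engineered locally around $x_N^{\mathrm{step}}(t - t^\kappa)$, with the $N - 1$ particles $x_1^{\mathrm{step}}, \ldots, x_{N-1}^{\mathrm{step}}$ sitting to its right contributing only an $N$-dependent $O(1)$ correction.
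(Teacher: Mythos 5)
There is a genuine gap: your proposal sets up the right framework (reduction to step initial data, Theorem \ref{convthm} giving the same limit law $F_{N,p}$ at times $t$ and $t-t^{\kappa}$, Lemma \ref{elemlem}) but stops exactly where the work is, namely at the construction of the comparison process. You explicitly leave the ``engineering'' of the auxiliaries as an unresolved obstacle, and the two-sided sandwich you ask for is in fact the wrong target: a \emph{lower}-bound auxiliary $\check\eta$ that is dominated in the coupling order, has its $N$-th particle within $O(1)$ of $x_N^{\mathrm{step}}(t-t^{\kappa})$, \emph{and} concentrates at $x_N^{\mathrm{step}}(t-t^{\kappa})+(p-q)t^{\kappa}+O_{\Pb}(t^{\kappa/2})$ is not constructed and is essentially as hard as the proposition itself. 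Moreover, once you have such a two-sided sandwich, Lemma \ref{elemlem} is superfluous, which signals that the plan is not using the one ingredient that makes slow decorrelation work.

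The point you are missing is that only a \emph{one-sided} bound is needed. The paper restarts at time $t-t^{\kappa}$ a single auxiliary ASEP from the full step configuration $\tilde\eta_{t-t^{\kappa}}(i)=\mathbf{1}_{\{i\leq x_{M}^{A}(t-t^{\kappa})\}}$, i.e.\ it deletes the $M-1$ particles ahead of $x_M^A(t-t^{\kappa})$ and fills every hole behind it, and compares the \emph{rightmost} particle $\tilde x_1$ of this auxiliary (not its $N$-th particle) with $x_M^A$: both operations can only push the tagged particle to the right, giving $x_M^A(t)\leq \tilde x_1(t)$, and $\tilde x_1(t)-x_M^A(t-t^{\kappa})-1$ is exactly distributed as $x_1^{\mathrm{step}}(t^{\kappa})$, which concentrates at $(p-q)t^{\kappa}$ with $O_{\Pb}(t^{\kappa/2})$ fluctuations by tightness from Theorem \ref{convthm}. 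Thus there is no ``$O(1)$ correction from the $N-1$ particles to the right'' to worry about. The matching lower bound then comes for free: since $x_M^A(t)/((p-q)t^{1/2})$ and $\tilde x_1(t)/((p-q)t^{1/2})$ both converge to $F_{M,p}(\cdot+C)$, Lemma \ref{elemlem} applied to the ordered pair forces their difference to vanish on the $t^{1/2}$ scale. Your write-up never produces the ordering that Lemma \ref{elemlem} is supposed to act on, so as it stands the argument is incomplete.
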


\begin{proof}
We may assume w.l.o.g. that $\lambda=0.$
Consider an ASEP with step initial data which starts at time $t-t^{\kappa}$ and has its rightmost particle at position $x_{M}^{A}(t-t^{\kappa})$:
Set  $\tilde{\eta}_{t-t^{\kappa}}(i):=\mathbf{1}_{\{i\leq x_{M}^{A}(t-t^{\kappa})\}}(i)$ and denote by $(\tilde{\eta}_{t-t^{\kappa}+s},s \geq 0)$ 
the ASEP which starts  at time $t-t^{\kappa}$ from   $\tilde{\eta}_{t-t^{\kappa}}$ .
Denote by $\tilde{x}_{1}(t)$ the position  of the rightmost particle of  $\tilde{\eta}_{t}$.
Then we have 
\begin{equation} \label{upperbound}
x_{M}^{A}(t)\leq x_{M}^{A}(t-t^{\kappa})+\tilde{x}_{1}(t)- x_{M}^{A}(t-t^{\kappa})
\end{equation}
Now
\begin{equation}\label{dashier}\tilde{x}_{1}(t)- x_{M}^{A}(t-t^{\kappa})-1=^{d}x_{1}^{\mathrm{step}}(t^{\kappa})\end{equation}
where $=^{d}$ denotes equality in distribution and $x_{1}^{\mathrm{step}}(t^{\kappa})$ is the position at time $t^{\kappa}$ of the rightmost particle 
in ASEP started with step initial data $x_{n}^{\mathrm{step}}(0)=-n,n\geq 0$.
Now by Theorem \ref{convthm} we have in particular that $\{x_{1}^{\mathrm{step}}(t^{\kappa})-(p-q)t^{\kappa})t^{-\kappa/2}\}_{t\geq 0}$ is tight,
which together with  \eqref{dashier} implies
\begin{equation}\label{eins}
\lim_{t \to \infty} \Pb\left(|\tilde{x}_{1}(t)- x_{M}^{A}(t-t^{\kappa})-(p-q)t^{\kappa} |t^{-1/2}\geq \varepsilon/2\right)=0.
\end{equation}

Again using  Theorem \ref{convthm}, we obtain
\begin{align}\label{zwei}
&\frac{x_{M}^{A}(t-t^{\kappa})+(p-q)t^{\kappa}}{t^{1/2}(p-q)}\Rightarrow F_{M,p}(\cdot+C)
\quad  \frac{x_{M}^{A}(t)}{t^{1/2}(p-q)}\Rightarrow F_{M,p}(\cdot+C).
\end{align}
So by \eqref{eins},
\begin{align*}
&\frac{x_{M}^{A}(t-t^{\kappa})+(p-q)t^{\kappa}}{t^{1/2}(p-q)}+      \frac{\tilde{x}_{1}(t)-x_{M}^{A}(t-t^{\kappa})-(p-q)t^{\kappa}}{t^{1/2}(p-q)}    \Rightarrow F_{M,p}(\cdot+C).
\end{align*}
Thus we can apply Lemma \ref{elemlem} to \eqref{upperbound}, 
which then implies 
\begin{align}\label{stochconv}
\frac{x_{M}^{A}(t-t^{\kappa})+(p-q)t^{\kappa} -\tilde{x}_{1}(t)}{t^{1/2}(p-q)}- \frac{x_{M}^{A}(t)-\tilde{x}_{1}(t)}{t^{1/2}(p-q)} \Rightarrow 0,
\end{align}
using \eqref{eins}, and \eqref{stochconv} is the desired statement.
\end{proof}

The next proposition shows that $x_{M+\lambda M^{1/3}}^{A}(t-t^{\kappa}),x_{M+\lambda M^{1/3}}^{B}(t) $ are asymptotically (i.e. in the $t\to\infty$ limit) independent for $\kappa>1/2$. This asymptotic independence does not require us to send $M\to \infty$, so it holds for arbitrary values of the constant $C$ of \eqref{AB}.
\begin{prop}\label{indepprop}
Consider the ASEPs $(\eta_{\ell}^{A}, \eta_{\ell}^{B},\ell \geq 0) $ started from \eqref{AB} for arbitrary $C\in \R$ and under the basic coupling.
Let $\kappa \in (1/2,1),R \in \Z, \tilde{C}, \lambda  \in \R.$ Then 
\begin{align*}
\lim_{t\to \infty}\Pb(&\min\{x_{M+\lambda M^{1/3}}^{A}(t-t^{\kappa})+(p-q)(t^{\kappa}+\tilde{C}t^{1/2}),x_{M+\lambda M^{1/3}}^{B}(t)\}\geq -R)\\&=\begin{cases} F_{M+\lambda M^{1/3},p}(C+\tilde{C})&\mathrm{for} \,  R \geq M+\lambda M^{1/3} \\
F_{M+\lambda M^{1/3},p}(C+\tilde{C})F_{M+\lambda M^{1/3}-R,p}(C) &\mathrm{for} \,  R < M+\lambda M^{1/3}.
\end{cases}
\end{align*} 
\end{prop}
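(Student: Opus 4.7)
My plan is to compute the two marginal limits separately and then upgrade to the joint product via an asymptotic independence argument based on spatial separation under the basic coupling.

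For the first event, note that $\eta^A_0 = \mathbf{1}_{\Z \leq -L-1}$, with $L := \lfloor(p-q)(t-Ct^{1/2})\rfloor$, is step initial data translated leftward by $L+1$; by distributional translation invariance of the Poisson environment, $x^A_{M+\lambda M^{1/3}}(t-t^\kappa) \stackrel{d}{=} x^{\mathrm{step}}_{M+\lambda M^{1/3}}(t-t^\kappa) - L - 1$. Substituting and using $\kappa<1$ so that $t^{1/2}\sim (t-t^\kappa)^{1/2}$, the first event becomes, in law, the event $\{x^{\mathrm{step}}_{M+\lambda M^{1/3}}(t-t^\kappa) \geq (p-q)((t-t^\kappa)-(C+\tilde C)(t-t^\kappa)^{1/2}) - R + o(t^{1/2})\}$, and Theorem \ref{convthm} yields the limit $F_{M+\lambda M^{1/3},p}(C+\tilde C)$.

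For the second event I split on the sign of $N' := M+\lambda M^{1/3} - R$. If $N' \leq 0$, the particle starts at $-(M+\lambda M^{1/3}) \geq -R$ with a positive drift, and Proposition \ref{block} controls any leftward excursion, so the event has probability tending to $1$. For $N' \geq 1$, I invoke particle-hole duality: the holes of $\eta^B$ start at $\{L+1,L+2,\ldots\}$ and perform ASEP$(q,p)$. A direct counting of particles at positions $\geq -R$ in $\eta^B_t$ shows $\{x^B_{M+\lambda M^{1/3}}(t) \geq -R\} = \{h^B_{N'}(t) \leq -R-1\}$, where $h^B_k$ is the $k$-th hole from the left of $\eta^B$; the reflection $j\mapsto -j$ (which swaps $p\leftrightarrow q$) then turns this into a shifted-step ASEP$(p,q)$ event about its $N'$-th particle from the right, and Theorem \ref{convthm} gives the marginal limit $F_{N',p}(C)$.

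The crucial observation for asymptotic independence is spatial: at time $t-t^\kappa$, $x^A_{M+\lambda M^{1/3}}(t-t^\kappa)$ typically sits near $-(p-q)t^\kappa+O(t^{1/2})$ while $h^B_{N'}(t-t^\kappa)$ (from the same reflected shifted-step analysis) typically sits near $+(p-q)t^\kappa+O(t^{1/2})$. Since $\kappa>1/2$ these are separated by $\gg t^{1/2}$, and throughout $[0,t]$ the finitely many $\eta^A$-particles relevant for $x^A_{M+\lambda M^{1/3}}$ stay to the left of $0$ by a growing margin while the finitely many $\eta^B$-holes relevant for $h^B_{N'}$ stay to its right. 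Using the finite-ASEP approximation of Section \ref{finite}, each of the two quantities can, up to probability $o(1)$, be represented as a measurable function of Poisson events in a bounded spatial box around its typical location, with the two boxes chosen disjoint; independence of Poisson processes in disjoint space-time regions then yields asymptotic independence, so the joint limit factors into the product $F_{M+\lambda M^{1/3},p}(C+\tilde C)\,F_{N',p}(C)$ (which reduces to the single factor $F_{M+\lambda M^{1/3},p}(C+\tilde C)$ when $N'\leq 0$, as the $\eta^B$-factor is then $1$). The main technical obstacle is justifying this spatial localization uniformly in $t$, i.e.\ showing that the probability that any relevant $\eta^A$-particle or $\eta^B$-hole leaves its designated box during $[0,t]$ is $o(1)$; for $\eta^A$ this follows from the $t^{1/2}$-scale tightness implicit in Theorem \ref{convthm} together with Proposition \ref{block} to handle pushes from labels $>M+\lambda M^{1/3}$, and for $\eta^B$ from the analogous estimates in the reflected hole picture.
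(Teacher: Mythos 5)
Your proposal is correct and follows essentially the same route as the paper: marginal limits from Theorem \ref{convthm} via translation and particle--hole duality, asymptotic independence by localizing the two observables to spatially disjoint regions of the Poisson environment (the $\eta^{A}$-particles stay left of $-(p-q)t^{\kappa}/4$ up to time $t-t^{\kappa}$, the $\eta^{B}$-holes stay right of $-t^{1/2+\varepsilon}$ up to time $t$, with these regions disjoint precisely because $\kappa>1/2$), and error control via Proposition \ref{block}. The only imprecisions are cosmetic: the localization regions are half-spaces of linear size $O(t)$ rather than bounded boxes around the typical terminal locations, the $A$-particles need only be confined during $[0,t-t^{\kappa}]$ rather than all of $[0,t]$, and the supremum-in-time control you correctly flag as the main obstacle is carried out in the paper exactly as you suggest, by restarting a reversed-step ASEP at the first crossing time and invoking Proposition \ref{block}.
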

\begin{proof}
Again we assume w.l.o.g. $\lambda=0$.

 Consider first the case $R<M$. Define the collection of holes
 \begin{equation}\label{holes}
H_{n}^{B}(0)=n+\lfloor (p-q)(t-Ct^{1/2})\rfloor, \quad n\geq 1.
\end{equation}
Note the $H_{n}^{B}$ perform an ASEP with (shifted) step initial data, where the holes jump to the right with probability $q<1/2$ and to the left with probability $p=1-q$.
The relation between holes and particles  is 
\begin{equation*}
\{H_{M-R}^{B}(t)<-R\}=\{x_{M}^{B}(t)\geq -R\}.
\end{equation*}
The limit law of both $H_{M-R}^{B}(t)$ and $x_{M}^{A}(t-t^{\kappa})$  is  given by Theorem \ref{convthm} (recall again the shift by $Ct^{1/2}$ in both \eqref{AB},\eqref{holes}) :
\begin{equation}
\begin{aligned}\label{need}
&\lim_{t \to \infty}\Pb (x_{M}^{A}(t-t^{\kappa})+(p-q)(t^{\kappa}+\tilde{C}t^{1/2})\geq -R)=F_{M,p}(C+\tilde{C})
\\&\lim_{t \to \infty}\Pb (H_{M-R}^{B}(t)<-R)=F_{M-R,p}(C).
\end{aligned}
\end{equation}
The basic idea of the proof is that the trajectories  $x_{1}^{A}(s),0\leq s\leq t-t^{\kappa},$  and  $H_{1}^{B}(s),0\leq s\leq t,$ stay in disjoint space-time regions as $t\to \infty$, and hence  $x_{n}^{A}(t-t^{\kappa}),n \geq 1,$ and 
$H_{n}^{B}(t),n \geq 1,$ are independent asymptotically.

Define now $\tilde{\eta}^{A}_0:=\eta_{0}^{A}$ . Let $0<\varepsilon<\kappa-1/2$. 
Graphically construct $(\tilde{\eta}^{A}_s)_{s\geq 0}$ just like $(\eta^{A}_s)_{s\geq 0},$ using the same Poisson processes,  with the difference that all jumps
in the space-time region
\begin{align*}
\{(i,s)\in \Z\times \R_{+}:i\geq-(p-q)t^{\kappa}/4, 0\leq s\leq t-t^{\kappa}\}
\end{align*}
are suppressed. Denote by $\tilde{x}_{M}^{A}(t-t^{\kappa})$ the position of the $M$-th  particle (counted from right to left) of $\tilde{\eta}^{A}_{t-t^{\kappa}}$.

Likewise, define  $\tilde{\eta}^{B}_0=\eta_{0}^{B}$ . 
Graphically construct $(\tilde{\eta}^{B}_s)_{s\geq 0}$ just like $(\eta^{B}_s)_{s\geq 0},$ using the same Poisson processes,  with the difference that all jumps
in the space-time region
\begin{align*}
\{(i,s)\in \Z\times \R_{+}:i\leq-t^{1/2+\varepsilon}, 0\leq s\leq t\}
\end{align*}
are suppressed. Denote by $\tilde{H}_{M-R}^{B}(t)$ the position of the $(M-R)$-th  hole (counted from left to right) of $\tilde{\eta}^{B}_{t}$.

Then, $\tilde{H}_{M-R}^{B}(t),\tilde{x}_{M}^{A}(t-t^{\kappa})$ are independent random variables.
Define the event
\begin{align*}
G_{t}=\{\tilde{H}_{M-R}^{B}(t)\neq H_{M-R}^{B}(t)\}\cup \{ \tilde{x}_{M}^{A}(t-t^{\kappa})\neq x_{M}^{A}(t-t^{\kappa})\}.
\end{align*} 
We show that 
\begin{align}\label{ja}
\lim_{t \to \infty}\Pb(G_{t})=0.
\end{align}
Let us first see how to finish the proof using \eqref{ja}: We have 
\begin{align*}
&\lim_{t\to \infty}\Pb(\min\{x_{M}^{A}(t-t^{\kappa})+(p-q)(t^{\kappa}+\tilde{C}t^{1/2}),x_{M}^{B}(t)\}\geq -R)
\\&=\lim_{t\to \infty}\Pb(\{\tilde{x}_{M}^{A}(t-t^{\kappa})+(p-q)(t^{\kappa}+\tilde{C}t^{1/2})\geq -R\}\cap \{ \tilde{H}_{M-R}^{B}(t)<-R\}\cap G_{t}^{c})
\\&= \lim_{t\to \infty}\Pb(\tilde{x}_{M}^{A}(t-t^{\kappa})+(p-q)(t^{\kappa}+\tilde{C}t^{1/2})\geq -R)\Pb(\tilde{H}_{M-R}^{B}(t)<-R)
\\&=F_{M,p}(C+\tilde{C})F_{M-R,p}(C),
\end{align*}
where for the last identity we used \eqref{need}.

Finally, consider the case $R\geq M$. Note that then $\Pb(x_{M}^{B}(t)\geq -R)=1$ and thus 
\begin{align*}&\lim_{t\to \infty}\Pb(\min\{x_{M}^{A}(t-t^{\kappa})+(p-q)(t^{\kappa}+\tilde{C}t^{1/2}),x_{M}^{B}(t)\}\geq -R)\\&=\lim_{t\to \infty}\Pb(x_{M}^{A}(t-t^{\kappa})+(p-q)(t^{\kappa}+\tilde{C}t^{1/2})\geq -R)\\&=F_{M,p}(C+\tilde{C}).
\end{align*}
This finishes the proof,  it remains to prove \eqref{ja}. To do this, 
note
\begin{align*}
\{\tilde{x}_{M}^{A}(t-t^{\kappa})\neq x_{M}^{A}(t-t^{\kappa})\}\subseteq \{\sup_{0\leq \ell\leq t-t^{\kappa}}x_{1}^{A}(\ell)\geq -(p-q)t^{\kappa}/4\}.
\end{align*}
By Theorem \ref{convthm},
\begin{align*}
\lim_{t \to \infty}\Pb (x_{1}^{A}(t-t^{\kappa})\geq -(p-q)t^{\kappa}/2)\leq \lim_{s\to -\infty}F_{1,p}(s)=0,
\end{align*}
so that 
\begin{align*}
&\lim_{t\to \infty}\Pb\left(\sup_{0\leq \ell\leq t-t^{\kappa}}x_{1}^{A}(\ell)\geq -(p-q)t^{\kappa}/4 \right)
\\&=\lim_{t\to \infty} \Pb\left (\sup_{0\leq \ell\leq t-t^{\kappa}}x_{1}^{A}(\ell)\geq -(p-q)t^{\kappa}/4,\,  x_{1}^{A}(t-t^{\kappa})\leq -(p-q)t^{\kappa}/2  \right).
\end{align*}
Start at time $0$ an ASEP from the initial data 
\begin{equation*}
\hat{\eta}_{0}(i)=\mathbf{1}_{\{i\geq -(p-q)t^{\kappa}/4\}}(i), \quad i\in \Z,
\end{equation*}
which is a shifted reversed step initial data. Denote by $\hat{x}_{0}(\ell)$ the position of the leftmost particle of $\hat{\eta}_{\ell}$  at time $\ell$.

Now on the event $\{\sup_{0\leq \ell\leq t-t^{\kappa}}x_{1}^{A}(\ell)\geq -(p-q)t^{\kappa}/4 \}$ there is a $\lambda_{1}\in [0,t-t^{\kappa}]$ such that
$x_{1}^{A}(\lambda_{1})\geq  -(p-q)t^{\kappa}/4$. 
Let us argue that we have 
\begin{equation}\label{llll}
x_{1}^{A}(\ell)\geq \hat{x}_{0}(\ell), \lambda_{1}\leq \ell\leq t-t^{\kappa}.
\end{equation}
To see this, note that if we start at time $\lambda_{1}$ an ASEP from $\mathbf{1}_{\Z_{\geq x_{1}^{A}(\lambda_{1})}}$, then for all times $\ell\geq \lambda_{1}$, the position of the leftmost particle of this ASEP is a lower bound for  $x_{1}^{A}(\ell)$. Furthermore, if  we start at time $\lambda_{1}$  an ASEP from $\hat{\eta}_{0}$ then for all  times $\ell\geq \lambda_{1}, $ the position of the leftmost particle  from this ASEP is a lower bound for the position of the leftmost particle of the ASEP started at time $\lambda_{1}$ from $\mathbf{1}_{\Z_{\geq x_{1}^{A}(\lambda_{1})}}$. Finally,  for all  times $\ell\geq \lambda_{1}, $  $\hat{x}_{0}(\ell)$ is   lower bound for  the position of the leftmost particle  of the ASEP started at time $\lambda_{1}$ from $\hat{\eta}_{0}.$ In particular,  for all  times $\ell\geq \lambda_{1}, $ $\hat{x}_{0}(\ell)$  is a lower bound for $x_{1}^{A}(\ell)$, i.e. \eqref{llll} holds.

In particular, we have thus shown 
\begin{equation*}
\{\sup_{0\leq \ell\leq t-t^{\kappa}}x_{1}^{A}(\ell)\geq -(p-q)t^{\kappa}/4\}\subseteq \{x_{1}^{A}(t-t^{\kappa})\geq \hat{x}_{0}(t-t^{\kappa})\}.
\end{equation*}
From this we may bound
\begin{align*}
& \lim_{t\to \infty} \Pb\left (\sup_{0\leq \ell\leq t-t^{\kappa}}x_{1}^{A}(\ell)\geq -(p-q)t^{\kappa}/4,\,  x_{1}^{A}(t-t^{\kappa})\leq -(p-q)t^{\kappa}/2 \right)
\\&\leq  \lim_{t\to \infty} \Pb\left ( \hat{x}_{0}(t-t^{\kappa})\leq -(p-q)t^{\kappa}/2 \right)
\\&=  \lim_{t\to \infty} \Pb\left ( x_{0}^{\mathrm{-step}}(t-t^{\kappa})\leq -(p-q)  t^{\kappa}/4 \right)
\\&=0,
\end{align*}
where in the last step we used Proposition \ref{block}. In total, we arrive at 
\begin{equation*}
\lim_{t \to \infty}\Pb(\tilde{x}_{M}^{A}(t-t^{\kappa})\neq x_{M}^{A}(t-t^{\kappa}))=0.
\end{equation*}
The proof of 
\begin{equation*}
\lim_{t \to \infty}\Pb(\tilde{H}_{M-R}^{B}(t)\neq H_{M-R}^{B}(t))=0
\end{equation*}
is almost identical, one notes 
\begin{align*}
\{(\tilde{H}_{M-R}^{B}(t)\neq H_{M-R}^{B}(t)\}\subseteq \{\inf_{0\leq \ell\leq t}H_{1}^{B}(\ell)\leq -t^{1/2+\varepsilon}\}
\end{align*}
and deduces  $\lim_{t\to \infty}\Pb(\{\inf_{0\leq \ell\leq t}H_{1}^{B}(\ell)\leq -t^{1/2+\varepsilon}\})=0$ from 
\begin{align*}
\lim_{t \to \infty}\Pb (H_{1}^{B}(t)\leq -t^{1/2+\varepsilon}/2)=0.
\end{align*}

So we have shown  \eqref{ja}. 
\end{proof}

\section{Proof of upper bound}\label{upsec}
Recall that we had defined in \eqref{yn} the minimum
\begin{equation}\label{y2}
y_{n}(t)=\min\{x_{n}^{A}(t),x_{n}^{B}(t)\},
\end{equation}
which under the basic coupling of $(\eta_{t},\eta^{A}_{t},\eta^{B}_{t},t \geq 0)$ satisfies $y_{n}(t)\geq x_{n}(t)$.
 The following theorem gives the \textit{discrete} $t\to\infty$  limit law for $y_{M+\lambda M^{1/3}}(t)$ inside the shock  as well as for $x_{M+\lambda M^{1/3}}(t)$ to the left of the shock.
\begin{tthm}\label{main}Let  $(\eta_{t},\eta^{A}_{t},\eta^{B}_{t},t \geq 0)$ be coupled via the basic coupling.
 For  fixed $C \in \R,$ consider  the minimum \eqref{y2} and the ASEP with initial data \eqref{IC}. 
Let $R \in \Z,M\geq 1,\lambda\in\R$. Then  
\begin{equation}\label{prodform}
\lim_{t \to \infty}\Pb(y_{M+\lambda M^{1/3}}(t)\geq -R)=\begin{cases} F_{M+\lambda M^{1/3},p}(C)&\mathrm{for} \,  R \geq M+\lambda M^{1/3} \\
F_{M+\lambda M^{1/3},p}(C)F_{M+\lambda M^{1/3}-R,p}(C) &\mathrm{for} \,  R < M+\lambda M^{1/3}.
\end{cases}
\end{equation}
Furthermore, for $s\in \R\setminus\{0\}$
\begin{equation}\label{cutoff}
\lim_{t \to \infty}\Pb(x_{M+\lambda M^{1/3}}(t)\geq -(p-q)s t^{1/2})=
F_{M+\lambda M^{1/3},p}(s+C)\mathbf{1}_{\{s>0\}}.
\end{equation}

\end{tthm}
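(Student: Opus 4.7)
The overarching plan is to reduce both claims to a combination of the slow decorrelation result (Proposition \ref{prop0}) and the asymptotic independence of Proposition \ref{indepprop} applied with $\tilde{C}=0$. Throughout I abbreviate $N:=M+\lambda M^{1/3}$.

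For \eqref{prodform}, write $\{y_N(t)\geq -R\}=\{x_N^A(t)\geq -R\}\cap\{x_N^B(t)\geq -R\}$ and fix any $\kappa\in(1/2,1)$. Proposition \ref{prop0} supplies $x_N^A(t)-x_N^A(t-t^{\kappa})-(p-q)t^{\kappa}=o(t^{1/2})$ in probability, so by continuity of $F_{N,p}$ I can replace $\{x_N^A(t)\geq -R\}$ with $\{x_N^A(t-t^{\kappa})+(p-q)t^{\kappa}\geq -R\}$ up to vanishing error. Proposition \ref{indepprop} with $\tilde{C}=0$ then yields the product $F_{N,p}(C)F_{N-R,p}(C)$ when $R<N$. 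When $R\geq N$, the hole--particle identity $\{x_N^B\geq -R\}=\{H_{N-R}^B<-R\}$ (from the proof of Proposition \ref{indepprop}) is tautological, giving a factor of $1$ and leaving $F_{N,p}(C)$.

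For \eqref{cutoff} I treat the two signs of $s$ separately. For $s>0$, the upper bound is immediate from Proposition \ref{sepprop} ($x_N\leq x_N^A$) together with Theorem \ref{convthm} applied to the shifted step data $\eta^A$, yielding $\Pb(x_N(t)\geq -(p-q)st^{1/2})\leq F_{N,p}(s+C)+o(1)$. For $s<0$, I use $x_N\leq x_N^B$ together with the analogous hole identity $\{x_N^B(t)\geq (p-q)|s|t^{1/2}\}=\{H_{N+(p-q)|s|t^{1/2}}^B(t)<(p-q)|s|t^{1/2}\}$; applying Theorem \ref{convthm} to the reflected hole ASEP shows the relevant hole probability is asymptotic to $F_{k,p}(C+|s|)$ with $k=N+(p-q)|s|t^{1/2}\to\infty$, and Proposition \ref{FGUE} then forces this to zero since the fixed argument $C+|s|$ sits deep in the left tail of the scaled Tracy--Widom limit once $k$ is large.

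The main obstacle is the lower bound for \eqref{cutoff} in the $s>0$ case. Proposition \ref{sepprop} furnishes only $x_N\leq y_N$ (equality holds only in TASEP), so the product formula for $y_N$ gives the upper bound for $x_N$ but not the matching lower bound. Closing the gap amounts to showing that the deficit $y_N(t)-x_N(t)$ is $o(t^{1/2})$ in probability on the relevant event. I would implement the mixing-time/countable state space strategy outlined in Section \ref{method}: pick $\chi\in(0,1/2)$, construct a favorable event $\mathcal{E}_{t-t^{\chi}}$ fixing suitable particle positions at time $t-t^{\chi}$, and then dominate $x_N(t)$ from below by the leftmost particle of an auxiliary countable-state-space ASEP started from a deterministic configuration at time $t-t^{\chi}$, which mixes rapidly to its stationary distribution during $[t-t^{\chi},t]$ by Proposition \ref{DOIT}, yielding the matching lower bound.
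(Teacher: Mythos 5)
Your treatment of \eqref{prodform} is essentially the paper's: sandwich $\{y_N(t)\geq -R\}$ (with $N=M+\lambda M^{1/3}$) using the slow decorrelation event from Proposition \ref{prop0}, apply Proposition \ref{indepprop}, and note that for $R\geq N$ the $B$-factor is trivially one; the paper runs the same argument with $\tilde{C}=\pm\varepsilon/(p-q)$ and lets $\varepsilon\to0$ using continuity of $F_{N,p}$, which is what your ``up to vanishing error'' amounts to. For \eqref{cutoff} with $s<0$, however, your route through the hole process applies Theorem \ref{convthm} with a label $k=N+(p-q)|s|t^{1/2}$ that grows with $t$, which that theorem (stated for fixed label) does not cover; the paper instead uses monotonicity, $\{x_N(t)\geq (p-q)|s|t^{1/2}\}\subseteq\{y_N(t)\geq -R\}$ for $t$ large, and sends $R\to-\infty$ in the already-proved \eqref{prodform}, using that $F_{N-R,p}(C)\to0$. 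That is a fixable blemish, not the main problem.

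The genuine gap is the lower bound in \eqref{cutoff} for $s>0$, which you correctly flag as the main obstacle but then only sketch via the mixing-time strategy of Section \ref{method}. That strategy is not the mechanism used here, and it is doubtful it can be made to work for this statement: \eqref{cutoff} is an exact fixed-$M$ limit equal to $F_{N,p}(s+C)$, so the favorable event $\mathcal{E}_{t-t^{\chi}}$ would need probability converging to exactly $F_{N,p}(s+C)$ while controlling $x_N(t-t^{\chi})$ itself --- but the law of $x_N(t-t^{\chi})$ is precisely what is unknown (only $x_N^A$ is accessible via Theorem \ref{convthm}), so the construction is circular. Moreover, in the paper the mixing-time lower bound of Theorem \ref{lowerthm} itself consumes the estimate \eqref{EM}, which is established \emph{inside} the proof of \eqref{cutoff}, so the logical dependence runs the other way. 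The missing idea is a propagation-of-discrepancy argument: with the stopping times $\tau_i=\inf\{\ell:x_i(\ell)\neq x_i^A(\ell)\}$ of \eqref{tau} one proves the inclusion \eqref{omit}, namely that $\{x_N(t)\neq x_N^A(t)\}$ forces an ordered chain $0<\tau_1<\cdots<\tau_N\leq t$ in which at each $\tau_i$ a jump of $x_i$ is blocked by $x_{i-1}$, tracing the discrepancy back to the leftmost particle $x_0$. Proposition \ref{block} (comparison with the blocking measure) keeps $x_0$, and then inductively each $x_i$, above $-(i+1)t^{\delta/2}$ up to exponentially small probability, so on the discrepancy event $x_N(t)\geq -(N+1)t^{\delta/2}>-(p-q)st^{1/2}$ and the defect probability $\Pb(x_N^A(t)\geq -(p-q)st^{1/2},\,x_N(t)<-(p-q)st^{1/2})$ vanishes. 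Without this argument the proof of \eqref{cutoff} for $s>0$ is not closed.
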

Using Proposition \ref{FGUE}, we arrive at continuous limit distributions by sending $M\to\infty$:
\begin{cor}\label{GUEGUEcor}
Consider ASEP with the initial data \eqref{IC} and $C=C(M)$ as in \eqref{C}.
Then 
\begin{equation}\label{dududu}
\lim_{M \to \infty} \lim_{t \to \infty}\Pb\left( y_{M+\lambda M^{1/3}}(t)\geq -\xi M^{1/3}\right)=F_{\GUE}(-\lambda)F_{\GUE}(\xi-\lambda),
\end{equation}
for $\lambda,\xi \in \R$. Furthermore,  we have for $s \in \R \setminus\{0\}$
\begin{equation}\label{dududu2}
\lim_{M \to \infty} \lim_{t \to \infty}\Pb\left( x_{M+\lambda M^{1/3}}(t)\geq -s\sqrt{p-q}t^{1/2}M^{-1/6}\right)=F_{\GUE}(s-\lambda)\mathbf{1}_{\{s>0\}}.
\end{equation}
\end{cor}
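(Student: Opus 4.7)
The plan is to deduce Corollary \ref{GUEGUEcor} directly from Theorem \ref{main} (which gives the $t\to\infty$ limit in terms of the discrete distribution functions $F_{M,p}$) combined with Proposition \ref{FGUE} (which controls the $M\to\infty$ asymptotics of $F_{M,p}$ at the GUE-Tracy--Widom scaling). Throughout, integer-valued quantities such as $M+\lambda M^{1/3}$ and $\xi M^{1/3}$ are interpreted as their integer parts; since $F_{M,p}$ and $F_{\GUE}$ are continuous, the rounding errors are harmless in the limit.

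\textbf{Proof of \eqref{dududu}.} For fixed $M,\lambda,\xi$, apply \eqref{prodform} with $R=\xi M^{1/3}$ and $C=C(M)=2\sqrt{M/(p-q)}$. For large $M$, we have $R < M+\lambda M^{1/3}$, so Theorem \ref{main} gives
\begin{equation*}
\lim_{t\to\infty}\Pb\bigl(y_{M+\lambda M^{1/3}}(t)\geq -\xi M^{1/3}\bigr)
= F_{M+\lambda M^{1/3},p}(C(M))\,F_{M+(\lambda-\xi)M^{1/3},p}(C(M)).
\end{equation*}
Next I would apply Proposition \ref{FGUE} to each factor. Setting $M_1=M+\lambda M^{1/3}$, I Taylor expand $2\sqrt{M_1} = 2\sqrt{M}(1+\lambda M^{-2/3})^{1/2} = 2\sqrt{M}+\lambda M^{-1/6}+O(M^{-5/6})$, so that
\begin{equation*}
C(M) = \frac{2\sqrt{M_1} + (-\lambda) M_1^{-1/6} + o(M_1^{-1/6})}{\sqrt{p-q}}.
\end{equation*}
By Proposition \ref{FGUE} and continuity of $F_{\GUE}$, the first factor tends to $F_{\GUE}(-\lambda)$. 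An identical expansion with $M_2=M+(\lambda-\xi)M^{1/3}$ yields the second factor's limit $F_{\GUE}(-(\lambda-\xi))=F_{\GUE}(\xi-\lambda)$. Multiplying gives \eqref{dududu}.

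\textbf{Proof of \eqref{dududu2}.} For $s\neq 0$, rewrite the threshold as $-(p-q)\tilde s\, t^{1/2}$ with $\tilde s = s M^{-1/6}/\sqrt{p-q}$. Applying \eqref{cutoff} of Theorem \ref{main} yields
\begin{equation*}
\lim_{t\to\infty}\Pb\bigl(x_{M+\lambda M^{1/3}}(t)\geq -s\sqrt{p-q}\,t^{1/2}M^{-1/6}\bigr)
= F_{M+\lambda M^{1/3},p}\!\left(\tilde s+C(M)\right)\mathbf{1}_{\{\tilde s>0\}},
\end{equation*}
and since $\tilde s$ and $s$ have the same sign, the indicator equals $\mathbf{1}_{\{s>0\}}$. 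Now
\begin{equation*}
\tilde s + C(M) = \frac{2\sqrt{M} + s M^{-1/6}}{\sqrt{p-q}} = \frac{2\sqrt{M_1} + (s-\lambda) M_1^{-1/6} + o(M_1^{-1/6})}{\sqrt{p-q}},
\end{equation*}
using the same expansion as above. Proposition \ref{FGUE} then gives the limit $F_{\GUE}(s-\lambda)\mathbf{1}_{\{s>0\}}$, which is \eqref{dududu2}.

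\textbf{Main obstacle.} There is no serious obstacle: the work was done in Theorem \ref{main} and Proposition \ref{FGUE}. The only item requiring care is the bookkeeping for the Taylor expansion $2\sqrt{M+\lambda M^{1/3}}=2\sqrt{M}+\lambda M^{-1/6}+o(M^{-1/6})$ and the verification that this $o(M^{-1/6})$ remainder can be absorbed in the argument of $F_{\GUE}$ by continuity, together with handling the integer-part rounding of $M+\lambda M^{1/3}$ and $\xi M^{1/3}$.
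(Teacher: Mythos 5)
Your proposal is correct and follows exactly the route the paper takes: the paper's own proof of Corollary \ref{GUEGUEcor} is the one-line observation that the result "follows from Theorem \ref{main} and Proposition \ref{FGUE} together with a simple change of variable," and your write-up simply makes that change of variable (the expansion $2\sqrt{M+\lambda M^{1/3}}=2\sqrt{M}+\lambda M^{-1/6}+o(M^{-1/6})$, plus monotonicity of $F_{M,p}$ and continuity of $F_{\GUE}$ to absorb the $o(M^{-1/6})$ error and the integer rounding) explicit.
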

\begin{proof}
Note that  $C$ is  as in \eqref{C}. Then the result follows from Theorem \ref{main} and  Proposition \ref{FGUE} together with a simple change of variable.
\end{proof}

We split the proof of Theorem \ref{main} in two parts.
\begin{proof}[Proof of \eqref{prodform}]
We assume   $\lambda=0$ w.l.o.g.
We define the event 
\begin{equation*}
A_{t}=\{ \left| x_{M}^{A}(t)-x_{M}^{A}(t-t^{\kappa})-(p-q)t^{\kappa}\right| \leq \varepsilon t^{1/2}\}.
\end{equation*}
We easily see the relations 
\begin{align*}
\{y_{M}(t)\geq -R\}\cap A_{t}&=\{\min\{x_{M}^{A}(t)-x_{M}^{A}(t-t^{\kappa})-(p-q)t^{\kappa}+x_{M}^{A}(t-t^{\kappa})+(p-q)t^{\kappa},x_{M}^{B}(t)\}\geq -R\}\\&\quad\,\cap A_{t}
\subseteq\{\min\{\varepsilon t^{1/2}+x_{M}^{A}(t-t^{\kappa})+(p-q)t^{\kappa},x_{M}^{B}(t)\}\geq -R\}
\end{align*}
and likewise
\begin{align*}
\{y_{M}(t)\geq -R\}\cap A_{t} \supseteq\{\min\{-\varepsilon t^{1/2}+x_{M}^{A}(t-t^{\kappa})+(p-q)t^{\kappa},x_{M}^{B}(t)\}\geq -R\}\cap A_{t}.
\end{align*}
Thus we have 
\begin{equation}
\begin{aligned}\label{fin1}
\lim_{t\to \infty} \Pb (y_{M}(t)\geq -R)\leq   \lim_{t\to \infty} \Pb(\min\{\varepsilon t^{1/2}+x_{M}^{A}(t-t^{\kappa})+(p-q)t^{\kappa},x_{M}^{B}(t)\}\geq -R)+\Pb(A_{t}^{c})
\end{aligned}
\end{equation}
and 
\begin{equation}\label{fin2}
\begin{aligned}
\lim_{t\to \infty} \Pb (y_{M}(t)\geq -R)
 \geq  \lim_{t\to \infty} \Pb(\{\min\{-\varepsilon t^{1/2}+x_{M}^{A}(t-t^{\kappa})+(p-q)t^{\kappa},x_{M}^{B}(t)\}\geq -R\})-\Pb(A_{t}^{c}).
\end{aligned}
\end{equation}

Applying Propositions   \ref{prop0} and  \ref{indepprop} to the inequalities \eqref{fin1},\eqref{fin2} yields for $R<M$
\begin{equation*}
F_{M,p}(C-\varepsilon/(p-q))F_{M-R,p}(C)\leq \lim_{t\to \infty} \Pb (y_{M}(t)\geq -R)\leq F_{M,p}(C+\varepsilon/(p-q))F_{M-R,p}(C),
\end{equation*}
and for $R\geq M$
\begin{equation*}
F_{M,p}(C-\varepsilon/(p-q))\leq \lim_{t\to \infty} \Pb (y_{M}(t)\geq -R)\leq F_{M,p}(C+\varepsilon/(p-q))
\end{equation*}
finishing the proof since $\varepsilon>0$ is arbitrary.
\end{proof}

\begin{proof}[Proof of \eqref{cutoff}]
 To lighten the notation, we may set w.l.o.g.
\begin{equation*}
\lambda=0
\end{equation*}
in this proof.
For $s<0$, \eqref{cutoff} easily follows from  \eqref{prodform} by sending $R\to-\infty$.

Let us  prove \eqref{cutoff} for $s>0$. Denote by $x_{-n}^{-\mathrm{step}}(0)=n,n\geq 0$ the reversed step initial data, and  couple $(x_{-n}^{-\mathrm{step}}(\ell))_{\ell\geq 0, n\geq 0}$ with $x_{M}(t),x_{M}^{A}(t),x_{M}^{B}(t)$ with the basic coupling. 
By  Theorem \ref{convthm} and since $x_{M}(t)\leq x_{M}^{A}(t)$ we have
\begin{align*}
F_{M,p}(s+C)=&\lim_{t \to \infty}\Pb(x_{M}^{A}(t)\geq  -(p-q)st^{1/2},x_{M}(t)< -(p-q)st^{1/2})
\\&+\lim_{t \to \infty}\Pb(x_{M}^{A}(t)\geq  -(p-q)st^{1/2},x_{M}(t)\geq  -(p-q)st^{1/2})
\\& =\lim_{t \to \infty}\Pb(x_{M}^{A}(t)\geq  -(p-q)st^{1/2},x_{M}(t)< -(p-q)st^{1/2})
\\&+\lim_{t \to \infty}\Pb(x_{M}(t)\geq  -(p-q)st^{1/2}).
\end{align*} 
It thus suffices to prove
\begin{equation*}
\lim_{t \to \infty}\Pb(x_{M}(t)< -(p-q)st^{1/2}, x_{M}^{A}(t)\geq  -(p-q)st^{1/2})=0.
\end{equation*} 
The reason why this is true is that in order for $x_{M}(t)\neq x_{M}^{A}(t)$ to hold, $x_{M}(t)$ must have "felt" the particle $x_{0}$. But by Proposition \ref{block}, $x_{0}$ does not go to the left of $-t^{\delta}$ for $\delta>0$ small, so if  $x_{M}(t)$ has felt the presence of $x_{0},$ $x_{M}(t)< -(p-q)st^{1/2}$   cannot hold. To make this precise, 
 define the stopping times 
\begin{equation}
\begin{aligned}\label{tau}
&\tau_{0}=0
\\&\tau_{i}=\inf\{\ell: x_{i}(\ell)\neq x_{i}^{A}(\ell)\}, \, i \geq 1.
\end{aligned} 
\end{equation}

We show
\begin{align}\label{omit}
\{x_{M}^{A}(t)\neq x_{M}(t)\}\subseteq \mathcal{B}_{t}:=\{ 0=\tau_{0}<\tau_{1}<\cdots<\tau_{M}\leq t, x_{i-1}(\tau_{i}) -x_{i}(\tau_{i})=1,i=1,\ldots,M\}.
\end{align}
To see \eqref{omit},  note $0<\tau_{M}\leq t$ on $\{x_{M}(t)\neq x_{M}^{A}(t)\}.$ Recall further $x_{M}(\ell)\leq x_{M}^{A}(\ell)$ for all $\ell\geq 0$. Then we have \begin{equation*}x_{M}(\tau_{M})\neq x_{M}^{A}(\tau_{M}),x_{M}(\tau_{M}^{-})= x_{M}^{A}(\tau_{M}^{-}).\end{equation*}
Now $x_{M}(\tau_{M}^{-})= x_{M}^{A}(\tau_{M}^{-})$ implies  $x_{M+1}(\tau_{M}^{-})= x_{M+1}^{A}(\tau_{M}^{-})$ : Assume to the contrary $x_{M}(\tau_{M}^{-})= x_{M}^{A}(\tau_{M}^{-}), x_{M+1}(\tau_{M}^{-})\neq  x_{M+1}^{A}(\tau_{M}^{-})$ both hold.  Since  $x_{M+1}(\tau_{M}^{-})\neq  x_{M+1}^{A}(\tau_{M}^{-})$ is equivalent to 
$x_{M+1}(\tau_{M}^{-})<  x_{M+1}^{A}(\tau_{M}^{-}),$ we have that $x_{M+1}(\tau_{M}^{-})\neq  x_{M+1}^{A}(\tau_{M}^{-})$ implies  \begin{equation*}x_{M+1}(\tau_{M}^{-})<  x_{M+1}^{A}(\tau_{M}^{-})< x_{M}^{A}(\tau_{M}^{-})= x_{M}(\tau_{M}^{-}).\end{equation*}But this cannot happen since 
then $\eta_{\tau_{M}^{-}}^{A}(x_{M+1}^{A}(\tau_{M}^{-}))=1>\eta_{\tau_{M}^{-}}(x_{M+1}^{A}(\tau_{M}^{-}))$ in contradiction to $\eta_{t}^{A}\leq \eta_{t}$ for all $t$.

Now the fact  that $x_{M}(\tau_{M}^{-})= x_{M}^{A}(\tau_{M}^{-}), x_{M+1}(\tau_{M}^{-})= x_{M+1}^{A}(\tau_{M}^{-})$  hold implies that the only way the discrepancy  $x_{M}(\tau_{M})\neq  x_{M}^{A}(\tau_{M})$
can be created is by a jump to the right of $x_{M}^{A}$ that $x_{M}$ does not make (the other possibility to create this discrepancy would be by a jump of $x_{M}$ to the left that $x_{M}^{A}$ does not make, but since 
$x_{M+1}(\tau_{M}^{-})= x_{M+1}^{A}(\tau_{M}^{-}),$ $x_{M}$ and $x_{M}^{A}$ can only jump together to the left at time $\tau_{M}$). This shows that at time $\tau_{M}$ a jump of $x_{M}$ has been suppressed by the presence of $x_{M-1}$ and it also shows that  $x_{M-1}(\tau_{M})<x_{M-1}^{A}(\tau_{M}),$ which in turn implies that 
\begin{equation*}
0<\tau_{M-1}<\tau_{M}.
\end{equation*}
 Repeating the preceding argument, we see that at time $\tau_{M-1}$ a jump of $x_{M-1}$ was suppressed by the presence of $x_{M-2}.$ Iteratively, we obtain $0<\tau_{1}< \cdots<\tau_{M} \leq t $ and  that at time $\tau_{i}$, a jump of $x_{i}$ is suppressed by the presence of $x_{i-1}$, $i=1,\ldots,M$. In particular, \eqref{omit} holds and in fact we have
 \begin{equation}\label{BBB}
 \mathcal{B}_{t}= \{\tau_{M}\leq t\}.
 \end{equation}

We can bound 
\begin{align}\nonumber
&\lim_{t \to \infty}\Pb(x_{M}(t)< -(p-q)st^{1/2}, x_{M}^{A}(t)\geq  -(p-q)st^{1/2})
\\&=\lim_{t \to \infty}\Pb(x_{M}(t)< -(p-q)st^{1/2}, x_{M}^{A}(t)\geq  -(p-q)st^{1/2},x_{M}(t)\neq x_{M}^{A}(t))\nonumber
\\&\leq \lim_{t \to \infty}\Pb(\{x_{M}(t)< -(p-q)st^{1/2}\}\cap \mathcal{B}_{t} )\label{CMP}.
\end{align} 
and wish to prove   that \eqref{CMP} equals zero.  

Define the event 
\begin{equation*}
E_{i}=\{\inf_{\tau_{i}\leq \ell \leq t}x_{i}(\ell) \leq -(i+1)t^{\delta/2}\}\cap\mathcal{B}_{t}.
\end{equation*}
By \eqref{bms},
\begin{equation}\label{E0}
\Pb(E_{0})\leq \Pb(\inf_{0\leq \ell\leq t}x_{0}^{-\mathrm{step}}(\ell)\leq -t^{\delta/2})\leq C_{1}e^{-C_{2}t^{\delta/2}}.
\end{equation}
Using $\Pb(E_{i})=\Pb((E_{i}\cap E_{i-1})\cup ( E_{i}\cap E_{i-1}^{c}),$ we may bound for  $i\geq 1$ 
\begin{align}\label{bbb}
\Pb(E_{i})&\leq \Pb(E_{i-1}) +   \Pb\left(\{\inf_{\tau_{i-1}\leq \ell\leq t}x_{i-1}(\ell)> -i t^{\delta/2}\}\cap \{\inf_{\tau_{i}\leq \ell\leq t}x_{i}(\ell)\leq  -(i+1) t^{\delta/2}\}\cap \mathcal{B}_{t}\right)
\\&\leq \Pb(E_{i-1}) +   \Pb\left(\{x_{i}(\tau_{i})\geq  -i t^{\delta/2}\}\cap \{\inf_{\tau_{i}\leq \ell\leq t}x_{i}(\ell)\leq  -(i+1) t^{\delta/2}\}\cap \mathcal{B}_{t}\right)
\end{align}
where we used $x_{i-1}(\tau_{i}) -x_{i}(\tau_{i})=1$ on $\mathcal{B}_{t}$.
Start an ASEP at time $0$ from the initial data
\begin{equation*}
\eta^{i}_{0}(j)=\mathbf{1}_{\{j\geq -i t^{\delta/2}\}}(j),\quad j \in \Z,
\end{equation*}
and denote by $x_{0}^{i}(s)$ the position of the leftmost particle of $\eta^{i}_{s}$ at time $s$.
Note 
\begin{equation*}
\{x_{i}(\tau_{i})\geq  -i t^{\delta/2}\} \cap \mathcal{B}_{t}\subseteq \{x_{0}^{i}(\ell )\leq x_{i}(\ell), \tau_{i}\leq \ell \leq t\}\cap \mathcal{B}_{t}.
\end{equation*}
Consequently,
we may bound
\begin{equation*}
\Pb(E_{i})\leq \Pb(E_{i-1}) +\Pb\left(\inf_{0\leq \ell \leq t}x_{0}^{i}(\ell )\leq -(i+1)t^{\delta/2}\right)\leq  \Pb(E_{i-1})+C_{1}e^{-C_{2}t^{\delta/2}},
\end{equation*}
implying by \eqref{E0} that we may  bound 
\begin{equation}\label{EM}
\Pb(E_{M})\leq (M+1)C_{1}e^{-C_{2}t^{\delta/2}}
\end{equation}
for $t$ sufficiently large.
Next we make the observation  that since $(M+1)t^{\delta/2}\leq s(p-q)t^{1/2}$ 
\begin{equation*}
\{x_{M}(t)< -(p-q)st^{1/2}\}\cap \mathcal{B}_{t}\setminus E_{M}=\emptyset.
\end{equation*}
This implies 
\begin{equation*}
\Pb(\{x_{M}(t)< -(p-q)st^{1/2}\}\cap \mathcal{B}_{t} )\leq \Pb(E_{M})\leq (M+1)C_{1}e^{-C_{2}t^{\delta/2}},
\end{equation*}
finishing the proof of \eqref{cutoff} for $s>0$.
\end{proof}

\section{Proof of lower bound}\label{lowsec}
Here we provide the lower bound for the double limit \eqref{Jamie}, see Theorem \ref{lowerthm} below.
In this Section, to make the needed adaptions to prove Theorem \ref{GUEGUE2} as easy as possible, we carry around with us   the parameter 
\begin{equation*}
\nu\in[0,3/7).
\end{equation*}
To prove Theorem \ref{GUEGUE} (in which $\nu$ does not appear),   we may set  $\nu=0$ wherever it appears in this Section. 
As it was already sketched in Section \ref{method}, we wish to first show  that already at a time point $t-t^{\chi}<t,$ the particles  $x_{0}(t-t^{\chi})$  and $x_{M+\lambda M^{1/3}}(t-t^{\chi})$ have reached certain positions with a probability that is asymptotically bounded from 
below by $F_{\GUE}(-\lambda)F_{\GUE}(\xi-\lambda)
$. 
To show this, we first consider $x_{0}(t-t^{\chi}).$ 
\begin{prop}\label{X0}Consider ASEP with the initial data \eqref{IC} and $C=C(M)$ as in \eqref{C}. 
Let $\delta\in (0, 1/2-7\nu /6)$ and $\chi \in  (\nu+\delta,1/2-\nu/6).$ 
Then 
\begin{equation}\label{prop34}
\lim_{M \to \infty}\lim_{t\to \infty}\Pb(x_{0}(t-t^{\chi})>M-M^{1/3}\xi)\geq F_{\GUE}(\xi).
\end{equation}
\end{prop}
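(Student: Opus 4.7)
I would apply the general strategy from Section \ref{method} with target particle $x_0$ and target limit $F_\GUE(\xi)$. Pick a secondary exponent $\chi''>0$ with $\chi''<\chi$, chosen so that $t^{\chi''}$ exceeds the relevant mixing time but is still dominated by $Ct^{1/2}$, and work at the intermediate time $t_1=t-t^\chi-t^{\chi''}$. I would produce an event $\mathcal{E}_{t_1}$, measurable with respect to the Poisson clocks on $[0,t_1]$, whose probability tends to $F_\GUE(\xi)$; and on that event I couple $x_0$ below by the leftmost particle $\bar x_0$ of a countable state space ASEP $(\bar\eta_s)_{s\geq t_1}$ started from a configuration in $\Omega_Z$ with $Z\approx M-M^{1/3}\xi$, using Poisson clocks on $[t_1,t-t^\chi]$ independent of $\mathcal{E}_{t_1}$.

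A natural choice is $\mathcal{E}_{t_1}=\{x_M^A(t_1)\geq a_*(M,t)\}$, with $a_*$ tuned so that Theorem \ref{convthm} applied to $\eta^A$ (which is step initial data shifted by $-\lfloor(p-q)(t-Ct^{1/2})\rfloor$), together with Proposition \ref{FGUE} and the choice $C=C(M)$, yields $\lim_M\lim_t\Pb(\mathcal{E}_{t_1})=F_\GUE(\xi)$; the upper bound $\chi<1/2-\nu/6$ guarantees that the backward shift from $t-t^\chi$ to $t_1$ is negligible on the scale $Ct^{1/2}$ carrying $x_M^A$'s law. The countable ASEP is then $\bar\eta_{t_1}=\mathbf{1}_{\{a,\ldots,b\}}+\mathbf{1}_{\Z_{\geq N+1}}\in\Omega_{N-b+a}$ with $(a,b,N)$ chosen so that (i) $N-b+a$ sits just above $M-M^{1/3}\xi$, (ii) $\mathcal{M}=\max\{b-a+1,N-b\}$ is $\mathrm{O}(M)$, hence $K\mathcal{M}\ll t^{\chi''}$, and (iii) on $\mathcal{E}_{t_1}$ the comparison demanded by Lemma \ref{lem} gives $\bar x_0(s)\leq x_0(s)$ for all $s\in[t_1,t-t^\chi]$. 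Proposition \ref{DOIT} then yields $\Pb(\bar x_0(t-t^\chi)<N-b+a-R)\leq\varepsilon/\mathcal{M}+C_1e^{-C_2R}$ for every $R\geq 1$; picking $R$ divergent but sub-$M^{1/3}$ and using the independence between $\bar\eta$'s post-$t_1$ clocks and $\mathcal{E}_{t_1}$,
\[\Pb(x_0(t-t^\chi)>M-M^{1/3}\xi)\geq\Pb(\mathcal{E}_{t_1})\,\Pb(\bar x_0(t-t^\chi)>M-M^{1/3}\xi)\longrightarrow F_\GUE(\xi).\]

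The main obstacle is step (iii): since $\eta$ has infinitely many particles to the left and lies in no $\Omega_Z$, Lemma \ref{lem} does not apply to $\eta_{t_1}$ directly, and one must route the comparison through the finite-ASEP approximation \eqref{Lfinite} together with the rank-monotonicity of the basic coupling. Moreover, by time $t_1$ the right block of \eqref{IC} has partially dissolved into a rarefaction fan, so the reverse-step piece of $\bar\eta_{t_1}$ cannot sit below $\eta_{t_1}$ pointwise; an auxiliary bound via the reversed-step ASEP of Proposition \ref{block} is needed to control the number of right-block particles still to the right of $N+1$. The exponent windows $\delta\in(0,1/2-7\nu/6)$ and $\chi\in(\nu+\delta,1/2-\nu/6)$ of the hypothesis are exactly what allows $\chi''$ to be picked with both $t^{\chi''}\gg K\mathcal{M}$ and $t^{\chi''}=\mathrm{o}(Ct^{1/2})$, so the $F_\GUE(\xi)$ limit of $\Pb(\mathcal{E}_{t_1})$ survives the backward time shift.
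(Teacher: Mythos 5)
Your overall template is the paper's own ``general strategy'' (an event measurable on an initial time window, a countable state space ASEP started at the intermediate time from a configuration in some $\Omega_Z$, Lemma \ref{lem} plus Proposition \ref{DOIT} for the mixing estimate, and the independence factorization), and the time-exponent bookkeeping is in the right spirit. But the conditioning event you choose is built from the wrong process, and this is not a repairable detail: you set $\mathcal{E}_{t_1}=\{x_M^A(t_1)\geq a_*\}$, i.e.\ an event about the $M$-th particle of the \emph{left infinite block} $\eta^A$. The particle $x_0$ in \eqref{prop34} is the leftmost particle of the \emph{finite right block} of \eqref{IC} (it starts at the origin), and the mechanism that carries it to position $\approx M-\xi M^{1/3}$ is that roughly that many holes of the right block sweep leftward past it. Accordingly, the $F_{\GUE}(\xi)$ in Proposition \ref{X0} must come from the event that the $Z$-th hole of the right block, $Z=\lfloor M-\xi M^{1/3}+M^{1/4}\rfloor$, has moved left of a mesoscopic position (order $t^{\delta+\nu}$) by the intermediate time; this is the event $\{H_Z^B(\cdot)<Z-1\}$ (transferred to the block process via a stopping-time argument), whose probability converges to $F_{Z,p}(C(M))\to F_{\GUE}(\xi)$ by Theorem \ref{convthm} applied to the hole process and Proposition \ref{FGUE}. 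Your event has the right limiting probability but the wrong content: $x_M^A$ and the right-block holes live in asymptotically disjoint space-time regions (this is exactly the independence proved in Proposition \ref{indepprop}), so conditioning on $\{x_M^A(t_1)\geq a_*\}$ gives no control whatsoever on the configuration to the right of $x_0$. (It does not even control $x_M$ in the full process, since $x_M^A\geq x_M$ is an upper bound, and $x_M<x_0$ in any case.)

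Concretely, your step (iii) then fails. To invoke Lemma \ref{lem} you must dominate, in the order $\preceq$ of \eqref{preceq}, the configuration seen by $x_0$ at the intermediate time by some $\eta^{a,b,N}$ with $N-b+a\approx M-\xi M^{1/3}$; this requires knowing that at most $Z$ holes remain to the right of $x_0$ below level $N$, i.e.\ precisely the position of $H_Z$. You try to supply this with ``an auxiliary bound via the reversed-step ASEP of Proposition \ref{block}'', but Proposition \ref{block} only yields events of probability $1-O(e^{-ct^{\delta}})$, whereas the statement that $Z\approx M-\xi M^{1/3}$ holes have passed by the intermediate time holds only with probability tending to $F_{\GUE}(\xi)<1$ --- it \emph{is} the nontrivial event, not a high-probability technicality. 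The paper's proof makes this the heart of the argument: it passes to the auxiliary process $\eta^D$ (the right block alone, so that $x_0^D\leq x_0$), fills all holes to the right of $H_Z^D$ to land in an $\Omega_{Z'}$, and takes $\tilde{\mathcal{E}}=\{x_0^D\geq -2M\}\cap\{H_Z^D\leq t^{\delta+\nu}\}$ as the conditioning event. A secondary inaccuracy: you claim $\mathcal{M}=O(M)$ for the block length in Proposition \ref{DOIT}, but the available control on $H_Z$ after the discrepancy time only places it below $t^{\delta+\nu}$, so $\mathcal{M}\approx t^{\delta+\nu}$ and the mixing window must beat $t^{\delta+\nu}$ --- which is exactly why the hypothesis requires $\chi>\nu+\delta$.
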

To prove Proposition \ref{X0}, we use the general strategy outlined in  Section \ref{method}, the only difference is that
here we wish to lower bound the position of $x_{0}$ at time $t-t^{\chi}$, hence we will construct an event $\tilde{\mathcal{E}}_{t-2t^{\chi}}$ which depends only on what happens during $[0,t-2t^{\chi}].$ 
The relation \eqref{allgemeine} from  Section \ref{method}  is \eqref{subseteq} here. 
Also note that by comparing  $x_{0}(t-t^{\chi})\leq x_{0}^{B}(t-t^{\chi})$ it is easy to see that the inequality \eqref{prop34} holds in the other direction, showing that \eqref{prop34} is in fact an identity.
\begin{proof}[Proof of Proposition \ref{X0}]
We define the initial data 
\begin{equation}\label{D}
x_{n}^{D}(0)=-n, \quad -\lfloor (p-q) (t-C(M)t^{1/2})\rfloor\leq n\leq 0
\end{equation}
(we have avoided the denomination $x_{n}^{C}$ here to reserve the letter $C$ for constants). We denote by $\eta^{D}_{s},0\leq s\leq t,$  the ASEP started from \eqref{D}.
It is easy to deduce from  $\eta_{0}^{D}(j)\leq \eta_{0}(j),j\leq 0,$ and 
$\eta_{0}^{D}(j)\geq \eta_{0}(j),j\geq 0,$
 that under the basic coupling we have the inequality 
\begin{equation*}
x_{0}^{D}(s)\leq x_{0}(s), \quad s\geq 0.
\end{equation*}
Thus it suffices to prove \eqref{prop34} for $x_{0}^{D}(t-t^{\chi})$.
We now go through the steps of the general strategy of Section \ref{method}. We stress that all processes  appearing in the proof are coupled via the basic coupling.

\textit{1.Step: Establishing the relation \eqref{allgemeine}:}\\

\begin{figure}\begin{center}
\begin{tikzpicture}[scale=1.6]

  \draw[thick, ->] (-1,0) -- (5.7,0);

   \draw (-1.8,1) node[below] {\Large{$\eta_{t-2t^{\chi}}^{D}:$}};
      \draw (1.1,-0.1) node[below] {$x_{0}^{D}(t-2t^{\chi})$};
    \draw (-0.7,0.2) circle (0.1); 
\draw (3.05,0.2) circle (0.1);
     \draw (3,-0.1) node[below] {$H_{Z}^{D}(t-2t^{\chi})$};
   \draw (3.8,-0.1) node[below] {$t^{\delta+\nu}$};
\draw[thick] (0.3,-0.1) node[below] {$-2M$};
   \foreach \x in {3.05,3.8,1.1,0.3}
      \draw[very thick] (\x,0.075)--(\x,-0.075);
 
  \foreach \x in {-0.2,-0.45,0.05,-0.95}
      \draw (\x,0.2) circle(0.1);
   \filldraw (1.1,0.2) circle (0.1);
   \foreach \x in {-0.2,-0.45,0.05,-0.95}
      \draw (\x,0.2) circle(0.1);

      \begin{scope}[yshift=-1.2cm]
      
         \draw (-1.8,1) node[below] {\Large{$\hat{\eta}_{t-2t^{\chi}}^{D}:$}};
  \draw[thick, ->] (-1,0) -- (5.7,0);

  \filldraw (1.1,0.2) circle (0.1);
      \draw (1.1,-0.1) node[below] {$\hat{x}_{0}^{D}(t-2t^{\chi})$};

\draw (3.05,0.2) circle (0.1);
     \draw (3,-0.1) node[below] {$H_{Z}^{D}(t-2t^{\chi})$};
   \draw (3.8,-0.1) node[below] {$t^{\delta+\nu}$};
\draw[thick] (0.3,-0.1) node[below] {$-2M$};

   \foreach \x in {3.05,3.8,1.1,0.3}
      \draw[very thick] (\x,0.075)--(\x,-0.075);
 
  \foreach \x in {-0.2,-0.7, -0.45,0.05,-0.95}
      \draw (\x,0.2) circle(0.1);
 
   \foreach \x in {-0.2,-0.45,0.05,-0.95}
      \draw (\x,0.2) circle(0.1);
      
   \foreach \x in {3.3,3.55,3.8,4.05,4.3,4.55,4.8,5.05,5.3,5.55}
      \filldraw (\x,0.2) circle(0.1);
 \end{scope}

    \begin{scope}[yshift=-2.4cm]
              \draw (-1.8,1) node[below] {\Large{$\bar{\eta}_{t-2t^{\chi}}:$}};
      
  \draw[thick, ->] (-1,0) -- (5.7,0);

\draw (3.3,0.2) circle (0.1);
     \draw (2.55,-0.1) node[below] {$ t^{\delta+\nu}-Z-2M$};
   \draw (3.8,-0.1) node[below] {$t^{\delta+\nu}$};
\draw (0.3,-0.1) node[below] {$-2M$};
\filldraw (0.3,0.2) circle(0.1);

   \foreach \x in {2.55,3.8,0.3}
      \draw[very thick] (\x,0.075)--(\x,-0.075);
 
  \foreach \x in {-0.2,-0.7, -0.45,0.05,-0.95,3.8}
      \draw (\x,0.2) circle(0.1);
 
   \foreach \x in {-0.2,-0.45,0.05,-0.95,2.8,3.05,3.3,3.55}
      \draw (\x,0.2) circle(0.1);
      
   \foreach \x in {0.55,0.8,1.05, 1.3,1.55,1.8,2.05,2.3,2.55,4.05,4.3,4.55,4.8,5.05,5.3,5.55}
      \filldraw (\x,0.2) circle(0.1);
 \end{scope}
\end{tikzpicture}\end{center}
\caption{From top to bottom: The three particle configurations $\eta_{t-2t^{\chi}}^{D}, \hat{\eta}_{t-2t^{\chi}}^{D},\bar{\eta}_{t-2t^{\chi}}$ on the event $\tilde{\mathcal{E}}_{t-2t^{\chi}}$ defined in \eqref{tildeE}. Holes/particles are shown as white/black circles.   $\hat{\eta}_{t-2t^{\chi}}^{D}$ is obtained from 
$\eta_{t-2t^{\chi}}^{D}$ by replacing all holes to the right of   $H_{Z}^{D}(t-2t^{\chi})$ by particles.
For  $\ell\geq t-2t^{\chi}$, the position $\hat{x}_{0}^{D}(\ell)$ of the  leftmost particle of $\hat{\eta}_{\ell}^{D}$ is a lower bound for the position of the leftmost particle  $x_{0}^{D}(\ell)$ of $\eta^{D}_{\ell}$. Equally, the position $\bar{x}_{0}(\ell)$ of the leftmost particle of $\bar{\eta}_{\ell}$ is a lower bound for $\hat{x}_{0}^{D}(\ell)$. 
In particular, on the event $\tilde{\mathcal{E}}_{t-2t^{\chi}},$ the particle position $x_{0}^{D}(t-t^{\chi})$ is bounded from below by $\bar{x}_{0}(t-t^{\chi}),$ and we bound $\bar{x}_{0}(t-t^{\chi})$ via Proposition \ref{DOIT}. }  
\label{Graph}
\end{figure}
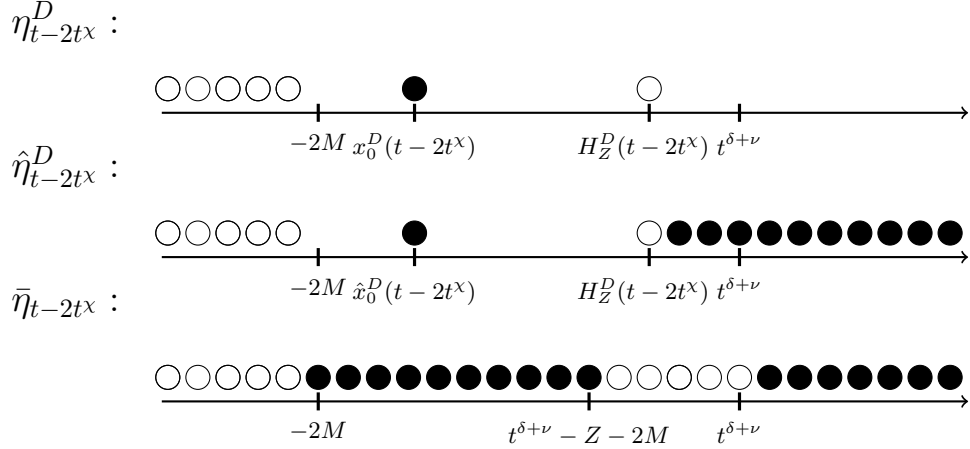
We label the holes of $ \eta_{0}^{D}$ as 
\begin{align*}
H_{n}^{D}(0)=\begin{cases}
n+\lfloor (p-q) (t-C(M)t^{1/2})\rfloor,\quad &n\geq 1\\
n-1, & n\leq 0.
\end{cases}
\end{align*}
We define the integer
\begin{equation}\label{Z}
 Z=\lfloor M-\xi M^{1/3}+M^{1/4}\rfloor
 \end{equation}
 and the deterministic  particle configuration $\bar{\eta}_{t-2t^{\chi}}$ (see Figure \ref{Graph})
 \begin{align*}
\bar{\eta}_{t-2t^{\chi}}(j):=\mathbf{1}_{\{-2M,\ldots,\lfloor t^{\delta+\nu}\rfloor-Z-2M\}}(j)+\mathbf{1}_{\Z_{\geq \lfloor t^{\delta+\nu}\rfloor+1}}(j)=\eta^{-2M,\lfloor t^{\delta+\nu}\rfloor-Z-2M,\lfloor t^{\delta+\nu}\rfloor}(j),j\in \Z ,
\end{align*} 

where the notation    $\eta^{a,b,N},a\leq b\leq N,$ was  introduced in  \eqref{abN}.

At time $t-2t^{\chi},$ we start an ASEP  from  $\bar{\eta}_{t-2t^{\chi}}$ 
and we denote by $\bar{x}_{0}(t-t^{\chi})$ the position of the leftmost particle of $ \bar{\eta}_{t-t^{\chi}}.$ 
Following the general strategy, we define
\begin{equation}\label{tildeE}
\tilde{\mathcal{E}}_{t-2t^{\chi}}=\{x_{0}^{D}(t-2t^{\chi})\geq -2M\}\cap\{H_{Z}^{D}(t-2t^{\chi})\leq \lfloor t^{\delta+\nu}\rfloor\},\end{equation}

and  the relation \eqref{allgemeine} from the general strategy that we have to prove is
 \begin{equation}\label{subseteq}
\tilde{\mathcal{E}}_{t-2t^{\chi}}\subseteq \{  \bar{x}_{0}(t-t^{\chi})\leq x_{0}^{D}(t-t^{\chi})\}.
 \end{equation}
To prove \eqref{subseteq}, we define an  auxiliary ASEP which starts at time $t-2t^{\chi}$ from 

\begin{align*}
\hat{\eta}^{D}_{t-2t^{\chi}}(j)=\begin{cases}
1 \quad & j>H_{Z}^{D}(t-2t^{\chi})
\\ \eta^{D}_{t-2 t^{\chi}}(j), \quad & j\leq  H_{Z}^{D}(t-2t^{\chi}),
\end{cases}
\end{align*} 
see Figure \ref{Graph}.
Denote by  $\hat{x}_{0}^{D}(t-t^{\chi})$ the position of the leftmost particle of $\hat{\eta}^{D}_{t-t^{\chi}}$. Again, under the basic coupling we have
\begin{equation}
\hat{x}_{0}^{D}(t-t^{\chi})\leq x_{0}^{D}(t-t^{\chi}).
\end{equation}
Recalling the partial order $\preceq$ from \eqref{preceq},  let us compute that  
 \begin{equation}\label{subseteq2}
\tilde{\mathcal{E}}_{t-2t^{\chi}}\subseteq \{ \bar{\eta}_{t-t^{\chi}} \preceq \hat{\eta}^{D}_{t-t^{\chi}} \}.
 \end{equation}
 Since the partial order $\preceq$ is preserved as time evolves,  it suffices to show that on $\tilde{\mathcal{E}}_{t-2t^{\chi}},$ we have
 $\bar{\eta}_{t-2t^{\chi}}\preceq \hat{\eta}^{D}_{t-2t^{\chi}}.$
To see this, note that on $\tilde{\mathcal{E}}_{t-2t^{\chi}},$ we have that $\hat{\eta}^{D}_{t-2t^{\chi}}(j)=\bar{\eta}_{t-2t^{\chi}}(j)$ for $j \notin \{-2M,\ldots, \lfloor t^{\delta+\nu}\rfloor\}$ . Hence, to show \eqref{subseteq2}, it suffices to check that  on $\tilde{\mathcal{E}}_{t-2t^{\chi}},$ for $j \in \{-2M,\ldots, \lfloor t^{\delta+\nu}\rfloor\}$
we have 
\begin{equation}\label{68}
\sum_{r=j}^{\infty}1-\hat{\eta}^{D}_{t-2t^{\chi}}(r)  \leq \sum_{r=j}^{\infty}1-\bar{\eta}_{t-2t^{\chi}}(r).
\end{equation}
To check \eqref{68}, note that on the event  $\tilde{\mathcal{E}}_{t-2t^{\chi}}$ we can compute
 \begin{equation}
 \sum_{r=\lfloor t^{\delta+\nu}\rfloor-Z-2M+1}^{\infty}1-\bar{\eta}_{t-2t^{\chi}}(r)
=Z+2M -1=  \sum_{r=-2M}^{\infty}1-\hat{\eta}^{D}_{t-2t^{\chi}}(r),\end{equation}
establishing \eqref{68}. Applying Lemma \ref{lem} to \eqref{subseteq2}, we see that  \eqref{subseteq} holds.


\textit{2. Step: Bounding $\Pb(\tilde{\mathcal{E}}_{t-2t^{\chi}})$:}\\
Following the general strategy, we show next 
\begin{equation}\label{alpha}
\lim_{M \to \infty}\lim_{t\to \infty}\Pb(\tilde{\mathcal{E}}_{t-2t^{\chi}})\geq F_{\GUE}(\xi).
\end{equation}

Recall the collection of holes $H_{n}^{B}(0)$  from \eqref{holes}.
Define the event
\begin{equation*}
\mathcal{D}_{t-2t^{\chi}}=\{H_{Z}^{B}(t-2t^{\chi}) <Z-1\}.
\end{equation*}    
It follows from Theorem \ref{convthm} and $\chi<1/2-\nu/6\leq 1/2$ that $\lim_{t\to \infty}\Pb(\mathcal{D}_{t-2t^{\chi}})=F_{Z,p}(C(M))$ and hence by Proposition \ref{FGUE}
 \begin{equation}\label{kolle}
 \lim_{M\to \infty}\lim_{t\to \infty}\Pb(\mathcal{D}_{t-2t^{\chi}})=F_{\GUE}(\xi).
 \end{equation}
 
We define (in direct analogy to \eqref{tau})
\begin{equation*}
\tau_{Z}^{D}=\inf\{\ell: H_{Z}^{D}(\ell)\neq H_{Z}^{B}(\ell)\}.
\end{equation*}
Since $H_{Z}^{D}(s)\geq Z-1$ for all $s\geq 0$  we have
\begin{equation}\label{label1}
\mathcal{D}_{t-2t^{\chi}}\subseteq \{\tau_{Z}^{D}\leq t-2t^{\chi}\}.
\end{equation}
Next we note that there are constants $C_{1},C_{2}>0$ so that for  $t$ sufficiently large 
\begin{equation}\begin{aligned}\label{label2}
&\Pb(\{\tau_{Z}^{D}\leq t-2t^{\chi}\}\cap \{H_{Z}^{D}(t-2t^{\chi})>t^{\delta+\nu}\})
\\&\leq \Pb(\{\tau_{Z}^{D}\leq t-2t^{\chi}\}\cap \{H_{Z}^{D}(t-2t^{\chi})>(Z+1)t^{\delta/2}\})\leq (Z+1)C_{1}e^{-C_{2}t^{\delta/2}}.
\end{aligned}
\end{equation}
The proof of \eqref{label2} is directly analogue to that of \eqref{EM}, one simply has to replace the role of the $x_{i}$ by $H_{i}^{D}$.
 From \eqref{kolle} and \eqref{label2} we deduce
  \begin{equation}\label{leqgue}
 \lim_{M\to \infty}\lim_{t\to \infty}\Pb(H_{Z}^{D}(t-2t^{\chi})\leq t^{\delta+\nu})\geq F_{\GUE}(\xi).
 \end{equation}

  Furthermore, 
\begin{align*}
\lim_{t\to\infty}\Pb\left( x_{0}^{D}(t-t^{\chi})<-2M\right)\leq \lim_{t \to\infty}\Pb\left( x_{0}^{-\mathrm{step}}(t-t^{\chi})<-2M\right)\leq C_{1}e^{-C_{2}M}
\end{align*}
using \eqref{bms2}, which together with \eqref{leqgue} proves  \eqref{alpha}.

 \textit{3. Step: Proving the relation \eqref{such that}:}\\
As last step from the general strategy, we need to show \eqref{such that}.  Recalling $\bar{\eta}_{t-2t^{\chi}}=\eta^{-2M,\lfloor t^{\delta+\nu}\rfloor-Z-2M,\lfloor t^{\delta+\nu}\rfloor}$, by Proposition \ref{DOIT} (with $R=M^{1/4}, \mathcal{M}=t^{\delta+\nu}-Z+1<t^{\chi},\varepsilon=1$), we have 
 \begin{equation}\label{ineqq2}
  \Pb\left(\{\bar{x}_{0}(t-t^{\chi})\geq M-\xi M^{1/3}\}   \right)\geq 1-C_{1}e^{-C_{2}M^{1/4}}-1/(t^{\delta+\nu}-Z+1).
 \end{equation}

Since  $\bar{x}_{0}(t-t^{\chi}),\tilde{\mathcal{E}}_{t-2t^{\chi}}$   are independent by construction, we may bound as in \eqref{such that}
 \begin{align}
& \notag \Pb\left(x_{0}^{D}(t-t^{\chi})\geq M-\xi M^{1/3}\right)\geq \Pb\left(\bar{x}_{0}(t-t^{\chi})\geq M-\xi M^{1/3} \right) \Pb(\tilde{\mathcal{E}}_{t-2t^{\chi}}),
 \end{align}
 finishing  the proof by \eqref{ineqq2} and \eqref{alpha}.

Let us note that   one of the  reasons why we assumed $\nu<3/7$ is to obtain  the inequality  \eqref{ineqq2}. If we had  $\nu\geq 1/2-\nu/6,$ (i.e. $\nu\geq 3/7 $), the ASEP started from $\bar{\eta}_{t-2t^{\chi}}$  could not come close to  equilibrium (specifically, hit the reversed step initial data, see the proof of Proposition \ref{DOIT})
during $[t-2t^{\chi},t-t^{\chi}]$  because $\chi<1/2-\nu/6$ (and we cannot increase $\chi$  to be bigger than $1/2-\nu/6$ without destroying the convergence of $\Pb(\mathcal{D}_{t-2t^{\chi}})$).
Without the mixing of $\bar{\eta}_{t-2t^{\chi}}$  though, we do  not get the needed inequality  \eqref{ineqq2}.

  \end{proof}

Finally, we can now provide the lower bound for the double limit \eqref{Jamie}. For this, we follow again the general strategy outlined in Section \ref{method}.

\begin{tthm}\label{lowerthm}
Consider ASEP with the initial data \eqref{IC} and $C=C(M)$ as in \eqref{C}.
We have
\begin{equation}\label{lower}
\lim_{M \to \infty} \lim_{t \to \infty}\Pb\left( x_{M+\lambda M^{1/3}}(t)\geq -\xi M^{1/3}\right)\geq F_{\GUE}(-\lambda)F_{\GUE}(\xi-\lambda)
\end{equation}
for $\lambda,\xi \in \R$.
\end{tthm}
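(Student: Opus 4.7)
The plan is to invoke the general strategy of Section \ref{method} with $z_{N}(t)=x_{M+\lambda M^{1/3}}(t)$, $R(N)=-\xi M^{1/3}$ and target $F=F_{\GUE}(-\lambda)F_{\GUE}(\xi-\lambda)$. Fix $\delta\in(0,1/2-7\nu/6)$ and $\chi\in(\nu+\delta,1/2-\nu/6)$ and, following the sketch after \eqref{passauf}, define
\[
\mathcal{E}_{t-t^{\chi}}=\{x_{0}(t-t^{\chi})\geq M+(\lambda-\xi)M^{1/3}\}\cap\{x_{M+\lambda M^{1/3}}(t-t^{\chi})>-t^{\delta}\},
\]
which is measurable with respect to the Poisson marks on $[0,t-t^{\chi}]$.

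The first step is the lower bound $\liminf_{M\to\infty}\liminf_{t\to\infty}\Pb(\mathcal{E}_{t-t^{\chi}})\geq F$. Both events defining $\mathcal{E}_{t-t^{\chi}}$ are of the form $\{\sigma_{r}(t-t^{\chi})\geq s\}$, so Proposition \ref{FKG} yields
\[
\Pb(\mathcal{E}_{t-t^{\chi}})\geq\Pb\bigl(x_{0}(t-t^{\chi})\geq M+(\lambda-\xi)M^{1/3}\bigr)\Pb\bigl(x_{M+\lambda M^{1/3}}(t-t^{\chi})>-t^{\delta}\bigr).
\]
Proposition \ref{X0} applied with $\xi-\lambda$ in place of $\xi$ bounds the first factor from below by $F_{\GUE}(\xi-\lambda)$ in the double limit. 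For the second factor I would transcribe the proof of \eqref{cutoff} to time $t-t^{\chi}$ and cutoff $-t^{\delta}$: the slow-decorrelation and asymptotic-independence inputs (Propositions \ref{prop0}, \ref{indepprop}) are insensitive to the replacement because $t^{\chi}/t\to 0$, and the bound \eqref{bms} on the left travel of $x_{0}$ controls the discrepancy event $\{x_{M+\lambda M^{1/3}}<-t^{\delta}\}\cap\{x_{M+\lambda M^{1/3}}^{A}\geq-t^{\delta}\}$ exactly as in \eqref{CMP}--\eqref{EM}. This produces $\lim_{t\to\infty}\Pb(x_{M+\lambda M^{1/3}}(t-t^{\chi})>-t^{\delta})=F_{M+\lambda M^{1/3},p}(C)$, which tends to $F_{\GUE}(-\lambda)$ as $M\to\infty$ by Proposition \ref{FGUE}.

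The second step is to start at time $t-t^{\chi}$ an auxiliary ASEP $(\tensor[^-]{\eta}{_{s}})_{s\geq t-t^{\chi}}$ from the deterministic configuration
\[
\tensor[^-]{\eta}{_{t-t^{\chi}}}(j)=\mathbf{1}_{\{-\lfloor t^{\delta}\rfloor,\ldots,-\lfloor t^{\delta}\rfloor+M+\lambda M^{1/3}-1\}}(j)+\mathbf{1}_{\{j\geq M+(\lambda-\xi)M^{1/3}\}}(j),
\]
driven by the Poisson marks on $[t-t^{\chi},t]$ and hence independent of $\mathcal{E}_{t-t^{\chi}}$. With $a=-\lfloor t^{\delta}\rfloor$, $b=a+M+\lambda M^{1/3}-1$, $N=M+(\lambda-\xi)M^{1/3}-1$ we have $\tensor[^-]{\eta}{_{t-t^{\chi}}}=\eta^{a,b,N}\in\Omega_{-\xi M^{1/3}}$ and $\mathcal{M}=\max\{b-a+1,N-b\}=O(M)$. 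The key inclusion to establish is
\[
\mathcal{E}_{t-t^{\chi}}\subseteq\bigl\{x_{M+\lambda M^{1/3}}(t)\geq\tensor[^-]{x}{_{M+\lambda M^{1/3}}}(t)\bigr\},
\]
where $\tensor[^-]{x}{_{M+\lambda M^{1/3}}}$ denotes the leftmost particle of $\tensor[^-]{\eta}{}$. This is the principal obstacle, because $\eta_{t-t^{\chi}}$ lies in no $\Omega_{Z}$ and so Lemma \ref{lem} cannot be applied directly to $\eta_{t-t^{\chi}}$ and $\tensor[^-]{\eta}{_{t-t^{\chi}}}$. I would chain three monotonicities under the basic coupling on $\mathcal{E}_{t-t^{\chi}}$: (i) deleting the particles $x_{M+\lambda M^{1/3}+1},x_{M+\lambda M^{1/3}+2},\ldots$ (all at positions $\leq -t^{\delta}$) can only give $x_{M+\lambda M^{1/3}}$ more freedom to jump left, so its position dominates that in the original $\eta$; (ii) left-packing the remaining labels $x_{1},\ldots,x_{M+\lambda M^{1/3}}$ into the block $[-\lfloor t^{\delta}\rfloor,-\lfloor t^{\delta}\rfloor+M+\lambda M^{1/3}-1]$ and (iii) filling $[M+(\lambda-\xi)M^{1/3},\infty)$ with particles (absorbing the labels $x_{0},x_{-1},\ldots$ into a reversed step) produce $\tensor[^-]{\eta}{_{t-t^{\chi}}}$; after truncating at a far-right coordinate $L$ to put both the post-(i) configuration and $\tensor[^-]{\eta}{_{t-t^{\chi}}}$ into a common $\Omega_{Z}$ and letting $L\to\infty$ as in \eqref{Lfinite}, the two configurations are comparable in the partial order $\preceq$ and Lemma \ref{lem} delivers the inclusion.

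Finally, since $\chi>0$ and $\mathcal{M}=O(M)$ we have $t^{\chi}\geq K\mathcal{M}$ for large $t$, so Proposition \ref{DOIT} applied with $R=M^{1/4}$ gives
\[
\Pb\bigl(\tensor[^-]{x}{_{M+\lambda M^{1/3}}}(t)<-\xi M^{1/3}-M^{1/4}\bigr)\leq\varepsilon/\mathcal{M}+C_{1}e^{-C_{2}M^{1/4}}\xrightarrow[M\to\infty]{}0.
\]
Combining the inclusion of the second step with the independence of $\tensor[^-]{\eta}{}$ from $\mathcal{E}_{t-t^{\chi}}$,
\[
\Pb\bigl(x_{M+\lambda M^{1/3}}(t)\geq-\xi M^{1/3}-M^{1/4}\bigr)\geq\Pb(\mathcal{E}_{t-t^{\chi}})\,\Pb\bigl(\tensor[^-]{x}{_{M+\lambda M^{1/3}}}(t)\geq-\xi M^{1/3}-M^{1/4}\bigr),
\]
and taking $t\to\infty$ then $M\to\infty$, using continuity of $F_{\GUE}$ to absorb the $M^{1/4}$ correction, yields \eqref{lower}.
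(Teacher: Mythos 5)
Your proposal is correct and follows essentially the same route as the paper's proof: the same event $\mathcal{E}_{t-t^{\chi}}$, the Harris inequality of Proposition \ref{FKG}, Proposition \ref{X0} together with an \eqref{EM}-type discrepancy bound for the two factors, the same auxiliary configuration started at time $t-t^{\chi}$, and Proposition \ref{DOIT} plus independence to conclude. One small slip worth noting: with $a=-\lfloor t^{\delta}\rfloor$ the quantity $\mathcal{M}=\max\{b-a+1,N-b\}$ is of order $t^{\delta}$ (the term $N-b$ dominates), not $O(M)$, but since you chose $\chi>\delta$ the hypothesis $s>K\mathcal{M}$ of Proposition \ref{DOIT} still holds for $s=t^{\chi}$ and the error term still vanishes, so nothing breaks.
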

\begin{proof}
We shall prove 
\begin{equation*}
\lim_{M \to \infty} \lim_{t \to \infty}\Pb\left( x_{M+\lambda M^{1/3}}(t)\geq -\xi M^{1/3}-M^{1/4}\right)\geq F_{\GUE}(-\lambda)F_{\GUE}(\xi-\lambda),
\end{equation*}
which is easily seen to imply \eqref{lower}.
Let (as in Proposition \ref{X0}) $\delta \in (0,1/2-7\nu /6)$  and $ \chi\in(\nu+\delta,1/2-\nu/6).$ 
Define the event
\begin{equation*}
\mathcal{E}_{t-t^{\chi}}^{\nu}=\{x_{0}(t-t^{\chi})\geq M+(\lambda-\xi)M^{1/3}\}\cap\{x_{M+\lambda M^{1/3}}(t-t^{\chi})\geq -t^{\delta+\nu}\},
\end{equation*}
for $\nu=0$, we get the event $ \mathcal{E}_{t-t^{\chi}}$ from \eqref{passauf}.
The application of the  Harris inequality  of Proposition \ref{FKG}
yields 
\begin{equation}\label{FKG2}
\Pb(\mathcal{E}_{t-t^{\chi}}^{\nu})\geq \Pb(\{x_{0}(t-t^{\chi})\geq M+(\lambda-\xi)M^{1/3}\})\Pb(\{x_{M+\lambda M^{1/3}}(t-t^{\chi})\geq -t^{\delta+\nu}\}).
\end{equation}
We will treat  each of the two factors on the R.H.S. of \eqref{FKG2} separately. 
By Proposition \ref{X0}, we obtain 
\begin{equation}
\lim_{M\to\infty}\lim_{t \to\infty}\Pb(\{x_{0}(t-t^{\chi})\geq M+(\lambda-\xi)M^{1/3}\})\geq F_{\GUE}(\xi-\lambda),
\end{equation}
(in fact,  this inequality is even  an identity).  As for the second factor,  using Theorem \ref{convthm}, we have 
\begin{equation}
\lim_{t\to\infty}\Pb(x_{M+\lambda M^{1/3}}^{A}(t-t^{\chi})\geq -t^{\delta+\nu})=F_{\GUE}(-\lambda)
\end{equation}
and the bound \eqref{EM} implies that 
\begin{equation}
\lim_{t\to\infty}\Pb(\{x_{M+\lambda M^{1/3}}^{A}(t-t^{\chi})\geq -t^{\delta+\nu}\}\cap\{x_{M+\lambda M^{1/3}}(t-t^{\chi})< -t^{\delta+\nu}\} )=0.
\end{equation}
Since $x_{M+\lambda M^{1/3}}^{A}(t-t^{\chi})\geq x_{M+\lambda M^{1/3}}(t-t^{\chi})$ under the basic coupling, we obtain  that 
\begin{equation}
\lim_{t\to\infty}\Pb(x_{M+\lambda M^{1/3}}(t-t^{\chi})\geq -t^{\delta+\nu})=F_{\GUE}(-\lambda).
\end{equation}
Thus in total we obtain 
\begin{equation*}
\lim_{M\to\infty}\lim_{t \to \infty}    \Pb(\mathcal{E}_{t-t^{\chi}}^{\nu})\geq F_{\GUE}(-\lambda)F_{\GUE}(\xi-\lambda).
\end{equation*}

The next step is to bound $x_{M+\lambda M^{1/3}}$ from below by a particle in a countable state space ASEP.
We start at time $t-t^{\chi}$ an ASEP from
\begin{equation}\label{eta-}
\tensor[^-]{\eta}{_{}}^{\nu}(j)=\mathbf{1}_{\{-\lfloor t^{\nu+\delta}\rfloor,\ldots,-\lfloor t^{\nu+\delta}\rfloor+M+\lambda M^{1/3}-1\}}(j)+\mathbf{1}_{\{j\geq M+(\lambda-\xi) M^{1/3}\}}(j), \quad j\in \Z,
\end{equation}
and   denote by $(\tensor[^-]{\eta}{_{\ell}}^{\nu})_{\ell\geq t-t^{\chi}}$ this ASEP. We denote  by $\tensor[^-]{x}{_{M+\lambda M^{1/3}}}(s)$  (suppressing the $\nu$) the position of the leftmost particle of 
$\tensor[^-]{\eta}{_{s}}^{\nu}$.
We have the relation, proven in a similar way as the relation \eqref{tildeE},   
\begin{equation*}\begin{aligned}
\mathcal{E}_{t-t^{\chi}}^{\nu} 
\subseteq\{\tensor[^-]{x}{_{M+\lambda M^{1/3}}}(s)\leq x_{M+\lambda M^{1/3}}(s),t-t^{\chi}\leq s \leq t\}.
\end{aligned}\end{equation*}
It is now essential that $t^{\chi}> t^{\nu+\delta}$: Because of this, the ASEP started from \eqref{eta-} has enough time to mix to equilibrium during $[t-t^{\chi},t]$ and 
hence $ \Pb( \tensor[^-]{x}{_{M+\lambda M^{1/3}}}(t)\geq -\xi M^{1/3}-M^{1/4})$ is almost one: Specifically, 
we  apply Proposition \ref{DOIT} with $R=M^{1/4}, \mathcal{M}=\lfloor t^{\nu+\delta}\rfloor+1,\varepsilon=1,$ and note that  $t^{\chi}> K\mathcal{M} $ for a constant $K$ and $t$ large enough.  We thus get 
\begin{align}\label{aligned}
 \Pb( \{\tensor[^-]{x}{_{M+\lambda M^{1/3}}}(t)\geq -\xi M^{1/3}-M^{1/4}\})\geq 1-C_{1}e^{-C_{2} M^{1/4}}-1/(\lfloor t^{\nu+\delta}\rfloor+1).
\end{align}

By construction,  $\tensor[^-]{x}{_{M+\lambda M^{1/3}}}(t)$  is independent of the event $\mathcal{E}_{t-t^{\chi}}^{\nu} .$  Thus we may conclude
\begin{equation*}
\begin{aligned}
&\lim_{M\to\infty}\lim_{t\to\infty}\Pb( x_{M+\lambda M^{1/3}}(t)\geq -\xi M^{1/3}-M^{1/4})
\\&\geq    \lim_{M\to\infty}\lim_{t\to\infty}\Pb( \{\tensor[^-]{x}{_{M+\lambda M^{1/3}}}(t)\geq -\xi M^{1/3}-M^{1/4}\}\cap \mathcal{E}_{t-t^{\chi}}^{\nu} )
\\&\geq   F_{\GUE}(-\lambda)F_{\GUE}(\xi-\lambda).
\end{aligned}
\end{equation*}

\end{proof}

\section{Proofs of   Theorems \ref{GUEGUE} and \ref{GUEGUE2}}\label{adapt}
While the proof of Theorem \ref{GUEGUE}  is immediate from the preceding results, the proof of Theorem \ref{GUEGUE2} requires some adaptions, which we give without repeating all the details given when proving Theorem \ref{GUEGUE}.
Let us start by proving Theorem \ref{GUEGUE}.
\begin{proof}[Proof of Theorem \ref{GUEGUE}]
By the inequality $y_{M+\lambda M^{1/3}}(t)\geq x_{M+\lambda M^{1/3}}(t) $ (see \eqref{Y}),  we see that Theorem \ref{GUEGUE} follows from  Corollary  \ref{GUEGUEcor} and Theorem  \ref{lowerthm}.
\end{proof}
Now we come to Theorem \ref{GUEGUE2}.
\begin{proof}[Proof of Theorem \ref{GUEGUE2}]
The structure of the proof of  Theorem \ref{GUEGUE2} is identical to the one for the  proof of Theorem \ref{GUEGUE}. To lighten the notation, we set
\begin{equation*}
M(t)=t^{\nu}+\lambda t^{\nu/3}.
\end{equation*}

 To prove \eqref{jaja}, we show separately the two inequalities 
\begin{align}\label{jajanee}
&\lim_{t \to \infty} \Pb\left(X_{M(t)}(t/(p-q))\geq -\xi t^{\nu/3}\right)\leq F_{\GUE}(-\lambda)F_{\GUE}(\xi-\lambda),
\\&\lim_{t \to \infty} \Pb\left(X_{M(t)}(t/(p-q))\geq -\xi t^{\nu/3}\right)\geq F_{\GUE}(-\lambda)F_{\GUE}(\xi-\lambda)\label{mart}.
\end{align}

 For \eqref{jajanee}, we define 
\begin{align}\nonumber
&X_{n}^{A}(0)=
-n-\lfloor t-2t^{(\nu+1)/2}\rfloor  \quad \mathrm{for} \quad   n \geq 1 
\\&X_{n}^{B}(0)=\label{XB}
-n  \quad \mathrm{for} \quad   n \geq -\lfloor t-2t^{(\nu+1)/2}\rfloor
\\&Y_{n}(t)=\min\{X_{n}^{A}(t),X_{n}^{B}(t)\}.
\end{align}
We have  $X_{n}(t)\leq Y_{n}(t)$ under the basic coupling  and thus prove \eqref{jajanee} by showing
\begin{align}\label{jajanee2}
&\lim_{t \to \infty} \Pb\left(Y_{M(t)}(t/(p-q))\geq -\xi t^{\nu/3}\right)= F_{\GUE}(-\lambda)F_{\GUE}(\xi-\lambda).
\end{align}
We can in fact prove  \eqref{jajanee2} for all $\nu\in (0,1)$.
We need as input the convergence 
\begin{equation}\label{doch}
\lim_{t\to\infty}\Pb\left(\frac{x_{M(t)}^{\mathrm{step}}(t/(p-q))-t+2t^{\nu/2+1/2}}{t^{1/2-\nu/6}}\geq -s\right)=F_{\GUE}(s-\lambda).
\end{equation}
As stated, \eqref{doch} does not seem to exist in the literature. However, Theorem 11.3 in \cite{BO17}  shows the convergence of the rescaled $x_{\sigma t}^{\mathrm{step}}$ for $\sigma$ bounded away from $0$ (see Remark 11.4 in \cite{BO17}). Inspecting the proof of 
Theorem 11.3 of \cite{BO17} reveals that the convergence to $F_{\GUE}$ follows from the convergence of the position of  rightmost particle of the continuous Laguerre orthogonal polynomial ensemble to $F_{\GUE}$, which also holds in the scaling of \eqref{doch}.

Analogous to  Proposition \ref{prop0} and proven in the same way we get
\begin{equation}\label{prop00}
  \lim_{t\to \infty}\Pb\left( \left| X_{M(t)}^{A}(t/(p-q))-X_{M(t)}^{A}((t-t^{\kappa})/(p-q))-t^{\kappa}+2t^{\kappa+\frac{\nu-1}{2}}\right|\geq \varepsilon t^{1/2-\nu/6}\right)=0
\end{equation}
 for $\kappa<1.$ 
 For $\kappa\in(1/2+\nu/2, 1)$ we can then  prove the analogue of Proposition \ref{indepprop}
 \begin{equation}\label{XX}
 \lim_{t\to \infty}\Pb(\min\{X_{M(t)}^{A}((t-t^{\kappa})/(p-q))+t^{\kappa}-2t^{\kappa+\frac{\nu-1}{2}},X_{M(t)}^{B}(t/(p-q))\}\geq -\xi t^{\nu/3})=F_{\GUE}(-\lambda)F_{\GUE}(\xi-\lambda).
 \end{equation}
 To have \eqref{XX}, we needed  to assume $\kappa>1/2+\nu/2$ so that for  $\varepsilon >0$  with $1/2+\nu/2+\varepsilon<\kappa$ we have on one hand that the leftmost hole of the initial data \eqref{XB}
 enters the space-time region 
 \begin{equation}\label{11}
 \{(i,s):i<-t^{1/2+\nu/2+\varepsilon},0\leq s\leq t/(p-q)\}
 \end{equation}
 with vanishing probability. On the other hand,  $X_{1}^{A}(s/(p-q)),0\leq s\leq t-t^{\kappa}$ enters the space-time region 
  \begin{equation}\label{22}
 \{(i,s):i>-t^{\kappa}/4,0\leq s\leq (t-t^{\nu})/(p-q)\}
 \end{equation}
 with vanishing probability. Since \eqref{11} and \eqref{22} are disjoint,  this shows the independence of $X_{M(t)}^{A}((t-t^{\kappa})/(p-q)), X_{M(t)}^{B}(t/(p-q))$ once they are restricted to \eqref{11},\eqref{22},  leading to \eqref{XX}.
 Finally, deducing \eqref{jajanee2} from \eqref{XX} is done exactly as in the proof of \eqref{prodform}.
 
 Next, to prove \eqref{mart}, we first prove the analogue of Proposition \ref{X0}, namely the convergence
 \begin{equation}\label{X02}
 \lim_{t\to\infty}\Pb(X_{0}((t-t^{\chi})/(p-q))\geq t^{\nu}-\xi t^{\nu/3})\geq F_{\GUE}(\xi),
 \end{equation}
with $\chi$ as in Proposition \ref{X0}. The proof of \eqref{X02} is analogous  to the one of Proposition \ref{X0}: one essentially  has to replace the term $M$ by $t^{\nu}$ in the proof of Proposition \ref{X0}, and instead of  the double limit we have a simple limit $t\to \infty.$ For example, the parameter $Z$ from \eqref{Z} now is
 \begin{equation}
 Z=t^{\nu}-\xi t^{\nu/3}+t^{\nu/4}
 \end{equation}
 and one checks that all steps of the proof go through with this choice. The same  applies to  the proof of \eqref{mart}, which  uses \eqref{X02} and is analogous to the proof of Theorem \ref{lowerthm}.
 
 Finally, the proof of \eqref{jaja2} (for $s>0,$ the case $s<0$ follows from \eqref{jajanee2}) is very similar to the one of \eqref{Thehound}, let us however explain how the restriction $\nu<3/7$ comes into play here as well : Similar to how  \eqref{Thehound} was proven, \eqref{jaja2} follows from
 \begin{equation}\label{EM3}
 \lim_{t\to\infty} \Pb(X_{M(t)}^{A}(t/(p-q))\geq -st^{1/2-\nu/6},X_{M(t)}(t/(p-q))< -st^{1/2-\nu/6})=0. 
 \end{equation}
Now one proves directly as \eqref{EM} that for $\delta>0$
 \begin{equation}
 \begin{aligned}\label{EM2}
& \Pb\left(\inf\{\ell:X_{M(t)}^{A}(\ell/(p-q))\neq X_{M(t)}(\ell/(p-q))\}\leq t/(p-q), X_{M(t)}(t/(p-q))\leq -(M(t)+1)t^{\delta/2}\right)
 \\&\leq (M(t)+1)C_{1}e^{-C_{2}t^{\delta/2}}\to_{t\to \infty} 0.
 \end{aligned}
 \end{equation}
 Now \eqref{EM2} implies \eqref{EM3} if $(M(t)+1)t^{\delta/2}<st^{1/2-\nu/6}$ which we can achieve if 
 $\nu<1/2-\nu/6,$ i.e. $\nu<3/7$.
 \end{proof}
  
\bibliographystyle{imsart-number}
\bibliography{Biblio}

\end{document}